\pgfplotsset{compat=1.15}
\theoremstyle{plain}
\newtheorem{theorem}{Theorem}[section]
\newtheorem{lemma}[theorem]{Lemma}
\newtheorem{proposition}[theorem]{Proposition}
\newtheorem{definition}[theorem]{Definition}
\theoremstyle{remark}
\newtheorem{remark}[theorem]{Remark}
\theoremstyle{definition}
\newcommand{\N}{\mathbb{N}}
\newcommand{\R}{\mathbb{R}}
\newcommand{\Z}{\mathbb{Z}}
\newcommand\e{\epsilon}
\newcommand\dist{\operatorname{dist}}
\newcommand\sym{\operatorname{sym}}
\newcommand{\SO}[1]{\operatorname{SO}(#1)}
\newcommand\ho{{\operatorname{hom}}}
\newcommand\ext{{\operatorname{ext}}}
\newcommand\per{{\operatorname{per}}}
\newcommand\loc{{\operatorname{loc}}}
\newcommand\II{{\operatorname{I\!I}}}
\newcommand{\step}[1]{\medskip\noindent\textit{Step #1. }}
\newcommand{\placeholder}{\makebox[1ex]{$\boldsymbol{\cdot}$}}
\newcommand{\eps}{\epsilon}
\newcommand{\dd}{\mathrm{d}}
\def\HH{{\mathbf H}}
\def\eff{{\mathrm{ eff}}}
\def\iso{{\mathrm{ iso}}}
\newcommand\deform{v}
\def\HH{{\mathbf H}}
\def\eff{{\rm eff}}
\def\iso{{\rm iso}}
\begin{document}

\begin{center}
  \LARGE
  Commutativity and non-commutativity of limits in the nonlinear bending theory for prestrained microheterogeneous plates
  \bigskip

  \normalsize
    Klaus B{\"o}hnlein\footnote{klaus.boehnlein@tu-dresden.de}\textsuperscript{A}, 
    Lucas Bouck\footnote{lbouck@andrew.cmu.edu}\textsuperscript{B}, 
  Stefan Neukamm\footnote{stefan.neukamm@tu-dresden.de}\textsuperscript{A}, 
  David Padilla-Garza\footnote{david.padilla-garza@mail.huji.ac.il}\textsuperscript{C} and
  Kai Richter\footnote{kai.richter@tu-dresden.de}\textsuperscript{A} \par \bigskip

  \textsuperscript{A}Faculty of Mathematics, TU Dresden\par \bigskip \textsuperscript{B}Department of Mathematical Sciences at Carnegie Mellon University \par \bigskip \textsuperscript{C}Einstein Institute of Mathematics, Hebrew University of Jerusalem\par \bigskip

  \today
\end{center}

\begin{abstract}
In this paper we study the derivation of nonlinear bending models for prestrained elastic plates from three-dimensional non-linear elasticity via homogenization and dimension reduction. We compare effective models obtained by either simultaneously or consecutively passing to the $\Gamma$-limits as the thickness $h\ll1$ and the size of the material microstructure $\e\ll1$ vanish. In the regime $\e\ll h$  we show that the consecutive and simultaneous limit are equivalent, and also analyze the rate of convergence. In contrast, we observe that there are several different limit models in the case $h\ll \e$.
\end{abstract}

\tableofcontents

\section{Introduction and main result }
In this paper we study the derivation of homogenized bending models from 3d nonlinear elasticity for prestrained, composite plates with a periodic microstructure. The starting point of the derivation is a functional $\mathcal E_{h,\eps}$ describing the elastic energy of a prestrained, three-dimensional plate with thickness $h$ and with a periodic microstructure on scale $\eps$. We are interested in effective models that describe the asymptotic behavior in the case when $h,\eps\ll 1$ and thus study the $\Gamma$-limit of $\mathcal E_{h,\eps}$. Here the limit $h\to 0$ corresponds to dimension reduction leading to a nonlinear bending model, and $\eps\to 0$ corresponds to homogenization. In addition to the purely mathematical interest in simultaneous homogenization and dimension reduction, the analysis of thin periodic sheets is directly motivated by applications \cite{griso2020homogenization,griso2021asymptotic,griso2020asymptotica,griso2020asymptotic,orlik2023asymptotic,falconi2024asymptotic}.

Due to the presence of two small parameters, various different ways to pass to the limit can be considered. In particular, there are two consecutive limits corresponding to dimension reduction after homogenization and vice versa, and simultaneous limits when 
\begin{equation*}
	(h,\eps)\to 0\qquad\text{with }\frac{h}{\eps}\to \gamma\in[0,\infty].
\end{equation*}
Here, $\gamma$ is a scaling parameter that describes the relative scaling between $h$ and $\eps$. 
As indicated in Figure~\ref{Fig:Diagram} and discussed in more detail below, most of the limits have already been established in the literature \cite{neukamm2025linearization,baia2007limit,bohnlein2023homogenized,FJM02,neukamm2015homogenization}. In this paper, we will consider these limits for prestrained materials. The analysis of such materials lies at the intersection of calculus of variations, physics, and geometry (see \cite{kupferman2014riemannian1,kupferman2014riemannian2,lewicka2020dimension,lewicka2014models,jimenez2021dimension}  and \cite{lewicka2023calculus} for a comprehensive review). In the case of the nonlinear bending theory for plates, as shown in \cite{bohnlein2023homogenized} the simultaneous limit leads to an energy functional $\mathcal E^\gamma_{\hom}$ and a spontaneous curvature $B^\gamma_{\rm eff}$ that explicitly depend on the scaling parameter $\gamma$. 

In our paper, we investigate the convergence of $\mathcal E^\gamma_{\hom}$ as $\gamma\to \infty$ and $\gamma\to 0$. It is natural to expect that these limits are related to the two consecutive limits (as it is the case for nonlinear rod models as shown in \cite{neukamm2012rigorous,bauer2020derivation}). In Theorems~\ref{T:gammainfty} and~\ref{T1} we prove that in the case $\gamma\to\infty$ we indeed recover the consecutive $\Gamma$-limit $\lim_{h\to 0}\lim_{\eps\to 0}\mathcal E_{h,\eps}$. The result shows that the diagram in Figure~\ref{Fig:Diagram} is commutative in this regime. On the other hand, we also prove that the limit $\gamma\to 0$ is different from the consecutive $\Gamma$-limit $\lim_{\eps\to 0}\lim_{h\to 0}\mathcal E_{h,\eps}$, and thus the diagram is non-commutative in that regime. We note that the same observation has been made for plates without prestrain. The positive commutativity result suggests that the bending model $\mathcal E^\gamma_{\hom}$ for $\gamma\gg1$ can be approximated by the consecutive limit $\mathcal E^\infty_{\hom}$, whose effective properties are easier to compute and analyze. In view of this, it is relevant to investigate the rate of convergence as $\gamma\to\infty$. In Theorem~\ref{T:quant} and~\ref{T:quant2}  we prove a convergence rate of the effective properties as $\gamma\to\infty$. The rate of convergence is confirmed by a simulation study that we provide in Subsection~\ref{Ss:numerical}. In the following we give a more detailed presentation of our results. 

\subsection{The three-dimensional model} 

The starting point of our analysis is the following energy functional of nonlinear elasticity:
\begin{equation}
	\mathcal{E}_{h, \eps}(\deform) := \frac{1}{h^{2}} \int_{\Omega} W(x',x_3, \tfrac{x'}{\eps}, \nabla_{h}\deform(x)(\mathrm{Id}+hB(x',x_3, \tfrac{x'}{\eps}))^{-1}) \,\dd x.
\end{equation}
Here, $\Omega := S \times \left( -\frac{1}{2}, \frac{1}{2} \right)$ describes the reference domain of the plate rescaled to unit thickness and $S\subseteq\R^2$ is a Lipschitz domain; points in $\Omega$ are denoted by $x=(x', x_{3})$ with $x'\in S$; $W$ denotes a frame-invariant stored energy function with a single, non-degenerate energy-well at $\SO 3$ (for the exact hypothesis see Definition \ref{Def:StoredEnergyFct} below). The parameter $h$ denotes the thickness of the plate. The scaled gradient 
$\nabla_{h} \deform := \left( \partial_{x_{1}} \deform, \partial_{x_{2}} \deform, \frac{1}{h} \partial_{x_{3}} \deform \right)$ is due to the rescaling to unit thickness.

The scaling factor $h^{-2}$ corresponds to the bending regime: The elastic energy (normalized by volume) of a bending deformations of a plate of thickness $h$ scales with $h^2$. Finally, $B$ denotes a tensor field that describes an incremental prestrain that might be present in the reference configuration. We note that $B$ is scaled with $h$ leading to a prestrain that is compatible with the bending regime. Both the stored energy function $W(x,y,F)$ and the prestrain $B(x,y)$ are assumed to be  periodic in $y\in\R^2$. The functional $\mathcal E_{h,\eps}$ thus describes a composite that is locally periodic in in-plane directions.

\subsection{(Non-) commutativity}
The various $\Gamma$-limits of $\mathcal E_{h,\eps}$ are summarized in the diagram of Figure~\ref{Fig:Diagram}.
\begin{figure}
\centering
\begin{tikzpicture}[scale=1.3]
	\node (tl) at (0,0) {$\mathcal{E}_{h,\e}$};
	\node (tr) at (4,0) {$\mathcal{E}_{h,\hom}$};
	\node (bl) at (0,-4) {$\mathcal{E}_{\eff,\e}$};
	\node (brgamma) at (2.6,-2.6) {$\mathcal{E}^\gamma_{\hom}$};
	\node (brinf) at (4,-2.5) {$\mathcal{E}^\infty_{\hom}$};
	\node (br0) at (2,-4) {$\mathcal{E}_{\rm NO}\cdots \mathcal{E}^0_{\hom}$};
	\node (5) at (1.4,-1) {\footnotesize\rotatebox{-45}{$h,\e \to 0, \frac{h}{\e} \to \gamma$}};
	\node[red] (6) at (3,-3.1) {\footnotesize\rotatebox{45}{$0\leftarrow\gamma\to\infty$}};
	\draw[->]  (tl) -- (tr) node[above, midway] {\footnotesize$\e \to 0$};
	\draw[->]  (bl) -- (br0) node[above, midway] {\footnotesize$\e \to 0$};
	\draw[->]  (tl) -- (bl) node[left, midway] {\footnotesize$h \to 0$};
	\draw[-> ,red]  (tr) -- (brinf) node[right, midway] {\footnotesize$h \to 0$};
	\draw[->]  (tl) -- (brgamma);
	\draw[<->, red] (br0)++(0.4,0.22) arc(180:90:1.2);
\end{tikzpicture}
\caption{Diagram indicating the possible limits as $\eps, h \to 0$. Black arrows indicate limits that are already present in the literature. Red arrows indicate limits proved in this paper.}
\label{Fig:Diagram}
\end{figure}
For the purpose of our paper the simultaneous limit
\begin{equation*}
	\mathop{\lim}\limits_{(h,\eps)\to 0\atop \frac{h}{\eps}\to \gamma}\mathcal E_{h,\eps}=\mathcal E^\gamma_{\hom}
\end{equation*}
is especially relevant: In \cite{bohnlein2023homogenized} it is shown that in the case $\gamma\in(0,\infty)$ the convergence holds and the limit is a bending energy of the form
\begin{equation}
	\label{eq:limitgamma}
	\mathcal{E}^{\gamma}_{\hom}(\deform)\colonequals
	\begin{cases}
		\displaystyle \int_{\Omega} Q^{\gamma}_{\eff} \left( x',  \II_{\deform} - B_{\eff}^{\gamma} \right)\dd x' + \mathcal I_{\rm res}^\gamma&\text{if } \deform\in W^{2,2}_{\iso}(S;\R^3),\\
		+\infty&\text{otherwise.}
	\end{cases}
\end{equation}
Here, $W^{2,2}_{\rm iso}(S;\R^3):=\{\deform\in W^{2,2}(S;\R^3)\,:\,\nabla\deform^\top\nabla\deform=\mathrm{Id}\}$ denotes the space of bending deformations, and $\II_{\deform}:=\nabla \deform^{T} \nabla b_\deform$ (with $b_\deform:= \partial_{1} \deform \wedge\partial_{1} \deform $ the unit normal) denotes the second fundamental form of the surface parametrized by $\deform$. Moreover, $Q^{\gamma}_{\eff}$ is a positive semi-definite quadratic form, $B^{\gamma}_{\eff}$ is the preferred bending, and $\mathcal I_{\rm res}^{\gamma}$ is a $\deform-$independent residual energy. These quantities are obtained by linearization, relaxation and homogenization from $W$ and $B$ as outlined in Section~\ref{sect:Qeff}. The derivation of $\mathcal E^\gamma_{\hom}$ in \cite{bohnlein2023homogenized} builds on various earlier works. In particular, the case without homogenization and without prestrain (i.e., $W(x,y,F)$ is independent of $y$ and $B\equiv 0$) is covered in the seminal paper \cite{FJM02}. An extension that includes homogenization without prestrain can be found in \cite{hornung2014derivation}, while the case without homogenization but with prestrain is discussed in \cite{padilla2022dimension,agostiniani2019heterogeneous,schmidt2007plate}.

As a first main result we prove the following continuity result w.r.t. the scaling parameter $\gamma$:
\begin{theorem}
	\label{T:gammainfty}
	For $\gamma\in[0,\infty]$ let $\mathcal E^\gamma_{\hom}$, $Q^\gamma_{\eff}$, $B^\gamma_{\eff}$, and $\mathcal I_{\rm res}^\gamma$ be defined as described in Section~\ref{sect:Qeff}. Let $\bar\gamma\in[0,\infty]$. Then as $\gamma \to \bar\gamma$ we have
	\begin{align*}
		&B^{\gamma}_{\eff} \to B^{\bar\gamma}_{\eff},\qquad Q^{\gamma}_{\eff}(G) \to Q^{\bar\gamma}_{\eff}(G)\qquad\text{for all }G\in\R^{2\times 2},\\
		&\mathcal I_{\rm res}^{\gamma} \to \mathcal I_{\rm res}^{\bar\gamma}.
	\end{align*}
	Furthermore, $\mathcal E^{\gamma}_{\hom}$ $\Gamma(L^2(S))$-converges to $\mathcal E^{\bar\gamma}_{\hom}$.
\end{theorem}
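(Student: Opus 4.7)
The plan is to reduce the $\Gamma$-convergence statement to the three pointwise continuity assertions, and to prove those by a $\Gamma$-type argument applied to the cell problems that define $Q^\gamma_{\eff}$, $B^\gamma_{\eff}$, and $\mathcal I^\gamma_{\rm res}$. Throughout I will use the variational characterizations from Section~\ref{sect:Qeff}: each of these quantities is an infimum over correctors on the periodicity cell $Y\times(-\tfrac12,\tfrac12)$ involving a scaled gradient $\nabla_{\gamma}=(\partial_{y_1},\partial_{y_2},\tfrac{1}{\gamma}\partial_{y_3})$ (formally), with the cases $\gamma=0$ and $\gamma=\infty$ corresponding to singular limits where one of the two derivations decouples from the other.

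First I would prove $Q^{\gamma}_{\eff}(G)\to Q^{\bar\gamma}_{\eff}(G)$ for every $G\in\R^{2\times 2}$. The upper bound is obtained by exhibiting a recovery sequence of correctors: given a near-optimal corrector $\phi_{\bar\gamma}$ for the $\bar\gamma$-problem, I construct $\phi_\gamma$ admissible for the $\gamma$-problem so that the scaled gradients converge; for $\bar\gamma\in(0,\infty)$ this is a direct rescaling, for $\bar\gamma=\infty$ one takes $\phi_\gamma$ independent of $y_3$ and adds a vanishing out-of-plane correction, and for $\bar\gamma=0$ one starts from a $y'$-averaged corrector and restores the in-plane fluctuations by standard two-scale correctors. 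The lower bound is handled by taking a sequence of near-minimizers $\phi_{\gamma_n}$: the quadratic (coercive) integrand $Q_0$ supplies a uniform $L^2$-bound on $\nabla_{\gamma_n}\phi_{\gamma_n}$, from which one extracts a weak (or two-scale) limit whose admissibility for the $\bar\gamma$-problem is verified by inspection of the limiting PDE/constraint on the correctors, and then lower semicontinuity gives $\liminf Q^{\gamma_n}_{\eff}(G)\geq Q^{\bar\gamma}_{\eff}(G)$. The same framework applies almost verbatim to $B^\gamma_{\eff}$ and $\mathcal I^\gamma_{\rm res}$, since these come from the same cell problem fed with the (fixed) prestrain $B$ rather than with $G$.

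With the three pointwise convergences in hand, I would deduce the $\Gamma(L^2(S))$-convergence. For the limsup it suffices to take the constant sequence $v_\gamma\equiv v$: the integrand $Q^\gamma_{\eff}(x',\II_v-B^\gamma_{\eff})$ is quadratic with coefficients controlled uniformly in $\gamma$ by the growth of $W$, hence dominated by $C(|\II_v|^2+|B^\gamma_{\eff}|^2+1)$; together with pointwise (in $x'$) convergence of the effective quantities this yields convergence of $\mathcal E^\gamma_{\hom}(v)$ to $\mathcal E^{\bar\gamma}_{\hom}(v)$ by Lebesgue's dominated convergence theorem. For the liminf, given $v_\gamma\to v$ in $L^2$ with $\liminf\mathcal E^\gamma_{\hom}(v_\gamma)<\infty$, the uniform coercivity of $Q^\gamma_{\eff}$ modulo the prestrain gives a uniform bound on $\|\II_{v_\gamma}\|_{L^2}$. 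Together with the isometry constraint (closed under weak $W^{2,2}$-convergence) this yields $v_\gamma\rightharpoonup v$ in $W^{2,2}_{\iso}$ with $\II_{v_\gamma}\wto \II_v$ in $L^2$; then weak lower semicontinuity for quadratic integrals with continuously varying coefficients (applicable because $Q^\gamma_{\eff}\to Q^{\bar\gamma}_{\eff}$ uniformly on bounded sets by quadraticity, and $B^\gamma_{\eff}\to B^{\bar\gamma}_{\eff}$ strongly) gives the desired inequality.

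The main obstacle is the cell-problem analysis at the endpoints $\bar\gamma\in\{0,\infty\}$: the scaled gradient $\nabla_\gamma$ is singular in the limit, so that identifying the weak limits of a minimizing sequence of correctors requires a careful two-scale/unfolding argument to pin down which components of $\nabla_{\gamma_n}\phi_{\gamma_n}$ remain bounded, which vanish, and in what function space the limit lives. Once the correct limit space is identified as the one appearing in the definition of $Q^{\bar\gamma}_{\eff}$ (and likewise for $B^{\bar\gamma}_{\eff}$ and $\mathcal I^{\bar\gamma}_{\rm res}$), the rest of the argument is a routine lower semicontinuity plus recovery-sequence construction.
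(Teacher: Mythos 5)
Your proposal follows essentially the same route as the paper: a $\Gamma$-convergence argument for the fixed-cell corrector problems (explicit recovery constructions at the endpoints $\bar\gamma\in\{0,\infty\}$, uniform energy bounds and identification of weak limits of the scaled corrector strains for the lower bound), convergence of (near-)minimizers, and then transfer to the effective quantities; the concluding $\Gamma(L^2(S))$-convergence of $\mathcal E^\gamma_{\hom}$ by constant recovery sequences, uniform coercivity and dominated convergence is the same routine step. Two points are stated too loosely, though neither is fatal. First, the endpoint compactness you flag as the ``main obstacle'' (which components of $\nabla_\gamma\varphi_\gamma$ survive and in which space the limit lives) is precisely the ingredient the paper takes from \cite[Lemma 5.1]{neukamm2013derivation}; your proposal would need that lemma or an equivalent two-scale argument spelled out. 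Second, the claim that ``the same framework applies almost verbatim'' to $B^{\gamma}_{\eff}$ and $\mathcal I^{\gamma}_{\rm res}$ glosses over the fact that $B^{\gamma}_{\eff}$ is \emph{not} the value of a cell minimization: it is defined through $(\mathbf E^\gamma)^{-1}P^{\gamma,\perp}_{\rm rel}(\sym B)$, so convergence of the $B$-fed minimum value alone does not yield it. The paper closes this by the corrector representation of Proposition~\ref{P:1} (convergence of the corrector strains, tested linearly against $B$, together with the uniform invertibility of $\widehat Q^\gamma$ from Lemma~\ref{L:E}) and then obtains $\mathcal I^{\gamma}_{\rm res}\to\mathcal I^{\bar\gamma}_{\rm res}$ from the identity expressing $Q^\gamma_{\rm ext}(x',B)$ as $Q^\gamma_{\eff}(x',B^\gamma_{\eff})$ plus the residual; your sketch contains the needed convergence of correctors implicitly, but the representation/inversion step should be made explicit. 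Also, in your recovery construction for $\bar\gamma=\infty$ the correction generating the $d$-column is of the form $\gamma\int_0^{x_3}d_\infty$, which is large rather than vanishing (only its scaled vertical derivative converges), and $\varphi_\gamma$ cannot be taken independent of $x_3$; the idea is right but the wording is off.
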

See Section~\ref{sect:qual} for the proof.

Next, we compare the functional $\mathcal E^\gamma_{\hom}$ in the extremal cases $\gamma\in\{0,\infty\}$ with the consecutive limits. We start with $\gamma=\infty$, which we compare with the consecutive limit $\lim_{h\to 0}\lim_{\eps\to 0}\mathcal E_{h,\eps}$. For this purpose, we first discuss the limit $\lim_{\eps\to 0}\mathcal E_{h,\eps}$. 

The homogenization of non-convex integral functionals was originally addressed in \cite{muller1987homogenization} and revisited in \cite{baia2007limit} using two-scale convergence and $\Gamma-$convergence, and allowing for space-dependent functionals. The homogenized limit of $\mathcal{E}_{h,\eps}$ as $\epsilon \to 0$ for fixed $h > 0$ is given by
\begin{equation}
\label{eq:homenergy}
	\mathcal{E}_{h, \hom}(\deform) := \int_{\Omega} W^h_\ho(x', x_3, \nabla_{h}\deform(x)) \,\dd x,
\end{equation}
where $W^h_\ho$ is given by the multi-cell homogenization formula 
\begin{equation}
\label{eq:formW}
	W^h_\ho(x', x_3, F) := \inf_{k \in \N} \inf_{\varphi \in W^{1,2}_\per(k {\mathcal{Y}}, \R^3)} \fint_{k {\mathcal{Y}}} W(x', x_3, y, (F +  \iota(\nabla'\varphi({y})))(\mathrm{Id}+hB(x', x_3,y))^{-1}) \,\dd {y}.
\end{equation}
Here, $y=(y_1,y_2)$ and $W^{1,2}_\per(k {\mathcal{Y}}, \R^3)$ denotes the Sobolev space of functions that are in-plane periodic, i.e., periodic w.r.t.~$y$. Furthermore, $\iota (\nabla'\varphi)$ denotes the $3 \times 3$ matrix whose left and middle columns are given by $\nabla'\varphi=(\partial_{y_1}\varphi,\partial_{y_2}\varphi)$, and whose right column is $0$. 

Finer properties of the functional $\mathcal{E}_{h, \hom}$ in the case $B=0$ were analyzed in \cite{muller2011commutability}. The main result of $\cite{muller2011commutability}$ is that homogenization and linearization commute, i.e. that the homogenization of the linearized functional is the same as the linearization of the homogenized functional. This analysis was extended to include a general prestrain $B(x', x_3,y)$ in \cite{neukamm2025linearization}. 

Building upon the work of \cite{neukamm2025linearization} and \cite{padilla2022dimension} we are able to identify the consecutive limit $\lim_{h\to 0}\lim_{\eps\to 0}\mathcal E_{h,\eps}$, which we state in the following theorem. 

\begin{theorem}[Consecutive limit]
\label{T1}
  Assume that $W$ satisfies Definition \ref{Def:StoredEnergyFct} and let $ \mathcal{E}_{h, \hom}$ be given by equation \eqref{eq:homenergy}. Then the following statements hold:
  \begin{enumerate}[(\alph*)]
  \item[(a)] \label{item:T1:compactness}
   (Compactness). Let $(\deform_h)\subset L^2(\Omega;\R^3)$ be a sequence with equibounded energy, i.e.,
    \begin{equation}\label{eq:equibounded}
      \limsup_{h\to0}  \mathcal{E}_{h, \hom}(\deform_h)<\infty.
    \end{equation}
    Then there exists $\deform\in W^{2,2}_{\iso}(S;\R^3)$ and a subsequence (not relabeled) such that
    \begin{subequations}\label{T1:conv}
      \begin{alignat}{2}
      \deform_h-\fint_{\Omega}\deform_h\dd x & \to \deform & \qquad & \text{in $L^2(\Omega)$},\\
      \nabla_h \deform_h & \to (\nabla'\deform,b_\deform) && \text{in $L^2(\Omega)$}.
    \end{alignat}
    \end{subequations}
    Here and below, $b_\deform \colonequals\partial_1 \deform \wedge \partial_2 \deform$ denotes the unit normal.
  \item[(b)] \label{item:T1:lower_bound} (Lower bound). If $(\deform_h)\subset L^2(\Omega;\R^3)$ is a sequence with $\deform_h-\fint_{\Omega}\deform_h\dd x\to \deform $ in $L^2(\Omega)$, then
    \begin{equation*}
      \liminf_{h\to0}  \mathcal{E}_{h, \hom}(\deform_h)\geq {\mathcal E}^{\infty}_{\hom} (\deform).
    \end{equation*}
  \item[(c)] \label{item:T1:recovery_sequence} (Recovery sequence). For any $\deform\in W^{2,2}_{\iso}(S;\R^3)$ there exists a sequence $(\deform_h)\subset W^{1,\infty}(\Omega;\R^3)$ with $\deform_h\to \deform$ strongly in $W^{1,2}(\Omega;\R^3)$ such that
    \begin{equation}\label{T1:c}
      \lim_{h\to0}  \mathcal{E}_{h, \hom}(\deform_h)={\mathcal E}^{\infty}_{\hom} (\deform).
    \end{equation}
  \end{enumerate}
\end{theorem}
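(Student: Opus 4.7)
Since the inner limit $\lim_{\eps\to 0}\mathcal E_{h,\eps}=\mathcal E_{h,\hom}$ has been identified in \cite{baia2007limit}, Theorem~\ref{T1} reduces to a prestrained Kirchhoff-type dimension reduction of $\mathcal E_{h,\hom}$ as $h\to 0$. The plan is to combine the linearization--homogenization commutability of \cite{neukamm2025linearization}, which controls the integrand $W^h_\ho$ near $\SO 3$, with the prestrained bending reduction of \cite{padilla2022dimension}.

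For (a) I would first verify that $W^h_\ho$ inherits from $W$ frame indifference, $p$-growth from above, and a quadratic coercivity of the form $W^h_\ho(x,F)\gtrsim \dist^2(F,\SO 3) - C h^2$, uniformly for $h$ small. The Friesecke--James--M\"uller geometric rigidity theorem applied to a sequence $(\deform_h)$ satisfying \eqref{eq:equibounded} then produces approximating rotations $R_h\in W^{1,2}(S;\SO 3)$ with
$\|\nabla_h\deform_h - R_h\|_{L^2(\Omega)}^2 + h^2\|\nabla R_h\|_{L^2(S)}^2 \lesssim h^2$. The standard argument from \cite{FJM02} then gives the convergences in \eqref{T1:conv} and the isometry constraint $\deform\in W^{2,2}_\iso(S;\R^3)$.

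The crux of (b) and (c) is the pointwise expansion
\begin{equation*}
  \tfrac{1}{h^2}\, W^h_\ho\bigl(x',x_3,\, R(\id + hG)\bigr) \;\xrightarrow[h\to 0]{}\; Q_{\hom}\bigl(x',x_3,\,\sym G - B_{\hom}(x',x_3)\bigr), \qquad R\in\SO 3,\; G\in\R^{3\times 3},
\end{equation*}
equipped with a matching $h$-uniform quadratic remainder. This is precisely the output of the commutability result of \cite{neukamm2025linearization}; here $Q_{\hom}$ is the homogenised linearised quadratic form and $B_{\hom}$ is the effective bulk prestrain obtained from $B$. With this expansion in hand, the lower bound in (b) follows by Taylor-expanding $W^h_\ho$ around the rigidity rotations $R_h$ from step (a), passing to the weak limit of the rescaled strain $(R_h^\top\nabla_h\deform_h - \id)/h$ in $x_3$, and optimising the resulting quadratic functional over an in-plane corrector and a transverse profile exactly as in \cite{padilla2022dimension}. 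This reconstructs the effective density $Q^\infty_\eff(x',\II_\deform - B^\infty_\eff)$ and the residual $\mathcal I_{\rm res}^\infty$ of Section~\ref{sect:Qeff}. For (c), I would first approximate $\deform\in W^{2,2}_\iso$ by smooth isometric immersions (via the density results of Pakzad/Hornung) and then take a Kirchhoff--Love ansatz
\begin{equation*}
  \deform_h(x',x_3) \;=\; \deform(x') + h x_3\, b_\deform(x') + h^2\, d_h(x',x_3),
\end{equation*}
with the corrector $d_h$ chosen to realise the infima defining $Q^\infty_\eff$ and $B^\infty_\eff$; inserting this ansatz into $\mathcal E_{h,\hom}$ and applying the uniform expansion above then yields \eqref{T1:c}.

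The main technical obstacle I foresee is the $h$-dependence of the integrand $W^h_\ho$: since the prestrain $B$ enters inside the multi-cell infimum \eqref{eq:formW}, the off-the-shelf results of \cite{padilla2022dimension} cannot be quoted verbatim, and one has to verify that all Taylor expansions of $W^h_\ho$ about $\SO 3$ hold with $h$-uniform quadratic remainders. This is exactly what the quantitative commutability theorem of \cite{neukamm2025linearization} delivers, and the careful translation of its effective linearised data ($Q_{\hom}$ and $B_{\hom}$) into the infima defining $Q^\infty_\eff$, $B^\infty_\eff$ and $\mathcal I_{\rm res}^\infty$ is the heart of the argument.
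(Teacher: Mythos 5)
Your overall architecture coincides with the paper's: compactness via the non-degeneracy of $W^h_\ho$ plus geometric rigidity from \cite{FJM02}, lower and upper bounds via the quadratic expansion of $W^h_\ho$ from \cite{neukamm2025linearization} combined with the prestrained bending reduction, and a final algebraic identification of the resulting $(Q_\ho,B_\ho,R(B))$-functional with $Q^\infty_\eff$, $B^\infty_\eff$, $\mathcal I^\infty_{\rm res}$ (the paper's $\widetilde{\mathcal E}^\infty_\hom$ of \eqref{eq:Etilde} and Lemma~\ref{L:rewriting}). However, there is one genuine gap in your plan. You assert that the expansion of $\tfrac1{h^2}W^h_\ho(x',x_3,\id+hG)$ comes ``with a matching $h$-uniform quadratic remainder'' and that this ``is exactly what \cite{neukamm2025linearization} delivers.'' Under hypothesis (W4) of Definition~\ref{Def:StoredEnergyFct} as stated, this is not true: Theorem~\ref{Thm:Expansion_Whom}(c) only yields a remainder of the form $(1+|G|^2)\,\rho(x',x_3,h+|hG|)$ with a modulus $\rho$ that is \emph{not} uniform in $(x',x_3)$ (the essential supremum in (W4) is taken only over $y$). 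Consequently, your Taylor expansion of $W^h_\ho$ along the rescaled strains $(R_h^\top\nabla_h\deform_h-\id)/h$, which are merely bounded in $L^2$, cannot be integrated directly: one needs both a strain truncation (e.g.\ on the set $\{|E_h|<h^{-1/2}\}$) and an Egorov-type selection of good sets $D_h\subset\Omega$ on which $\rho(\cdot,\cdot,h+h^{1/2})\to0$ uniformly; this is precisely the role of Lemma~\ref{lem:egorov} in the paper, both in the lower bound and in the recovery argument. Your proof as written only works under the strictly stronger assumption that the remainder in (W4) is uniform in $(x',x_3)$, which the paper deliberately avoids.

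A second, smaller issue concerns the recovery sequence. The limiting strain to be realised is $\iota(x_3\II_\deform+M^*)+\sym(d^*\otimes e_3)$ with an \emph{arbitrary} optimal in-plane field $M^*\in L^2(S;\R^{2\times2}_{\sym})$; the plain Kirchhoff--Love ansatz $\deform+h x_3 b_\deform+h^2 d_h$ produces no order-$h$ in-plane stretch beyond $x_3\II_\deform$, so it cannot attain $M^*$, and moreover the non-uniform remainder again forces $L^\infty$-type control of the strains along the recovery sequence. The paper sidesteps both points by invoking Proposition~4.4 of \cite{bartels2023nonlinear}, which supplies deformations whose nonlinear strains converge strongly in $L^2$ to the prescribed target \emph{and} satisfy $h^{-\beta}\|\nabla_h v_h-(\nabla'\deform,b_\deform)\|_{L^\infty}\to0$. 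Also note that your displayed expansion omits the residual term $R(B)(x',x_3)$, which is what ultimately produces $\mathcal I^\infty_{\rm res}$; you recover it later in words, but it must be carried through the expansion. With the Egorov/truncation step added and the recovery construction replaced (or supplemented) as above, your outline matches the paper's proof.
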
%
\noindent
The proof is found in Section~\ref{sect:proofT1}. 

Together, Theorems \ref{T:gammainfty} and \ref{T1} imply that the limits $\lim_{h\to 0}\lim_{\eps\to 0}\mathcal E_{h,\eps}$ and $\lim_{\gamma \to \infty} \mathop{\lim}\limits_{(h,\eps)\to 0\atop \frac{h}{\eps}\to \gamma}\mathcal E_{h,\eps}$ are the same. Hence, there is commutativity at $\gamma = \infty$. 

On the other hand, the picture for $\gamma = 0$ is highly non-commutative. Firstly, we may look at the consecutive limit $\lim_{\eps\to 0} \lim_{h\to 0} \mathcal E_{h,\eps}$. In the absence of prestrain (i.e. $B=0$), the limit $\lim_{h\to 0} \mathcal E_{h,\eps}$ is simply a nonlinear bending energy functional with oscillating coefficients (see \cite{FJM02}), in other words an energy given by 
\begin{equation}
\mathcal{E}_{\eff, \eps} (\deform)=  \begin{cases}
    \displaystyle \int_{S} Q_{\eff} \left( \frac{x'}{\eps},  \II_{\deform} \right)\dd x'&\text{if } \deform\in W^{2,2}_{\iso}(S;\R^3),\\
    +\infty&\text{otherwise.}
  \end{cases}
\end{equation}
 The limit as $\eps \to 0$ of this sequence of energies is given by a homogenized nonlinear bending theory, derived in \cite{neukamm2015homogenization}. 
 For cylindrical deformations it is of the form
\begin{equation}
\label{eq:generalbending}
     \mathcal{E}_{\rm NO}(\deform ) \colonequals  \begin{cases}
        \displaystyle \int_{S} V \left( \II_{\deform} \right)\dd x'&\text{if } \deform\in W^{2,2}_{\iso}(S;\R^3),\\
        +\infty&\text{otherwise,}
  \end{cases}
\end{equation}\\
where $V$ is non-quadratic (see \cite{neukamm2015homogenization} equations (5) and (6)).
Hence, the limits $\lim_{\eps\to 0}\lim_{h\to 0}\mathcal E_{h,\eps}$ and $\lim_{\gamma\to 0} \mathop{\lim} \limits_{(h,\eps)\to 0\atop \frac{h}{\eps}\to \gamma}\mathcal E_{h,\eps}$ are not the same, and there is no commutativity at $\gamma = 0$. Furthermore, we may ask about the limits of $\mathcal{E}_{h, \eps}$ in the regime $h,\eps \to 0$ simultaneously, with $\frac{h}{\epsilon} \to 0$, which also corresponds formally to $\gamma = 0$. The regime $h \ll \epsilon^{2}$ was derived in \cite{cherdantsev2015bending}, while the regime $\epsilon^{2} \ll h \ll \epsilon$ was derived in \cite{velvcic2015derivation}. The derivation of a nonlinear bending theory in the regime $h \simeq \epsilon^{2}$ is still an open problem. The limits derived in \cite{cherdantsev2015bending} and \cite{velvcic2015derivation} are different among themselves, and different from the other two limits discussed before which formally correspond to $\gamma = 0$. 

\subsection{Rate of convergence for $\gamma \to \infty$}
\label{sect:rate}

We investigate the rate of convergence of $\mathcal{E}^{\gamma}_{\hom}$ as $\gamma \to \infty$. We do not expect that a nontrivial rate holds for general elastic laws and general prestrains, but we will show that, assuming additional material symmetry and regularity in $x_3$, it is possible to show a polynomial rate of convergence to the limit. 

For the precise statement of the result we need to introduce some notation. The coefficients $Q_{\rm eff}^\gamma$ and $B^\gamma_{\hom}$ of the effective bending model depend on the quadratic term $Q$ in the expansion at identity of the stored energy function $W$, and the prescribed prestrain tensor $B$. Both, $Q$ and $B$ depend on the macroscopic in-plane variable $x'\in S$, as well as on the (rescaled) out-of plane variable $x_3\in I:=\left( -\frac{1}{2}, \frac{1}{2} \right)$, and the microscopic in plane variables $y=(y_1,y_2)\in\mathcal Y$. Here, $\mathcal{Y}$ denotes the $2d$ torus $\R^2/\Z^2$, which we identify (as a set) with $(0,1)^2$. We therefore introduce the notation $\Box \colonequals I \times \mathcal{Y}$. We consider the following class of quadratic forms and prestrains:

\begin{definition}
	\label{def:class}
	The class $\mathcal{Q}(\alpha,\beta)$ consists of all quadratic forms $Q$ on $\R^{3\times 3}$ such that
	\begin{equation*}
		\alpha|\sym G|^2\leq Q(G)\leq\beta|\sym G|^2
		\qquad
		\text{for all }G\in\R^{3\times 3},
	\end{equation*}
	where $\sym{G} \colonequals \frac{1}{2}(G + G^\top)$ is the symmetric part of $G$. We call an elastic law $Q$ \emph{orthotropic} if 
	\begin{equation*}
			Q(\iota(F) + d \otimes e_{3}) =Q(\iota(F)) + Q(d \otimes e_{3})
	\end{equation*}
	for all $F\in \R^{2 \times 2}$ and $d \in \R^{3}$.  We associate with each $Q \in \mathcal{Q}(\alpha,\beta)$ the fourth-order stiffness tensor $\mathbb L\in\operatorname{Lin}(\R^{3\times 3},\R^{3\times 3})$ defined by the polarization identity
	\begin{equation}
		\mathbb L H:G\colonequals\frac12\big(Q(H+G)-Q(H)-Q(G)\big),
	\end{equation}
	where $:$ denotes the standard scalar product in $\R^{3\times 3}$.
\end{definition} 

\begin{definition}[Admissible quadratic form and prestrain]\label{D:admissible}
	\begin{itemize}
		\item[(a)]
		A Borel function $Q:S \times I \times \mathcal{Y} \times\R^{3\times 3}\to\R$ is called an
		\emph{admissible quadratic form}, if $Q(x', x_3,y,\cdot)$ is a quadratic form
		of class $\mathcal{Q}(\alpha,\beta)$ for a.e.\ $(x', x_3,y) \in S \times I \times \mathcal{Y}$.
		
		\item[(b)] A Borel function $B:S \times I \times \mathcal{Y} \to\R^{3\times 3}_{\sym}$ is called an \emph{admissible prestrain}, if $B$ is square integrable. 
	\end{itemize}
\end{definition}

We now state the main result regarding the rates of convergence. 
\begin{theorem}
\label{T:quant}
Let $Q$ and $B$ be an admissible quadratic form and prestrain, respectively. Let $Q^\gamma_{\eff}$, $B^\gamma_{\rm eff}$ and $\mathcal I^\gamma_{\rm eff}$ be defined as in Section~\ref{sect:Qeff}. 
Assume that $Q$ is orthotropic and that for some $\alpha \in (0,1]$
\begin{equation}
\label{eq:alpharegularity}
\sup_{x' \in S, y \in \mathcal{Y}; x_{3}^{1}, x_{3}^{2} \in I } \frac{\left| \mathbb L(x', x_{3}^{1}, y) - \mathbb L(x', x_{3}^{2}, y)\right|}{|x_{3}^{2} - x_{3}^{1}|^{\alpha}} < \infty.
\end{equation}
Then there holds:
\begin{itemize}
\item[(a)] There exists a constant $C$ such that for all $\gamma \in (0,\infty )$,
\begin{equation}
\label{eq:q-rate-convergence}
Q^{\infty}_{\eff} (x',G)\leq Q^{\gamma}_{\eff} (x',G) \leq Q^{\infty}_{\eff} (x',G) + \frac{C |G|^{2}}{\gamma^{2 \alpha}},
\end{equation} 
for a.e. $x' \in S$ and all $G\in\R^{2\times 2}$.
\item[(b)] There exists a constant $C$ such that such that for all $\gamma \in (0,\infty )$ and for a.e. $x' \in S$,
\begin{equation}
\left| B^{\gamma}_{\eff}(x') - B^{\infty}_{\eff}(x') \right| \leq \frac{C}{\gamma^{\alpha}}. 
\end{equation}
\item[(c)] Assume that 
\begin{equation}
	B\in\begin{cases}
		L^{\infty}(S, C^{\alpha} (I, L^{2}(\mathcal{Y}, \R^{3})))&\text{if }\alpha\in(0,1),\\
		L^{\infty}(S, W^{1, \infty} (I, L^{2}(\mathcal{Y}, \R^{3})))&\text{if }\alpha=1.
	\end{cases}
\end{equation}
Then there exists a constant $C$ such that for all $\gamma \in (0,\infty )$,
\begin{equation}
\left| \mathcal I_{\rm res}^{\gamma} - \mathcal I_{\rm res}^{\infty} \right| \leq \frac{C}{\gamma^{\alpha}}. 
\end{equation}
\end{itemize}
\end{theorem}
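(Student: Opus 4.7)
The plan is to leverage the variational structure of $Q^\gamma_{\eff}$, $B^\gamma_{\eff}$ and $\mathcal I^\gamma_{\rm res}$ described in Section~\ref{sect:Qeff}, combined with the qualitative convergence from Theorem~\ref{T:gammainfty}, to obtain quantitative rates by constructing near-optimal competitors whose suboptimality is controlled by the H\"older modulus of $\mathbb L$ in $x_3$.

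First I would recall that under orthotropy the cell problem defining $Q^\infty_{\eff}(x', G)$ decouples into an in-plane homogenization over $\mathcal Y$ (with a minimizer that depends only on $y$) and a pointwise-in-$y$ Kirchhoff--Love minimization in $x_3$, which can be solved explicitly. The lower inequality $Q^\infty_{\eff}\le Q^\gamma_{\eff}$ in part (a) follows either from Theorem~\ref{T:gammainfty} combined with lower semicontinuity, or directly by projecting any $\gamma$-corrector onto this decoupled ansatz and using orthogonality. For the upper bound I would take a minimizer $\phi^\infty$ of the $\infty$-problem and use it, after a mild $x_3$-regularization at scale $\gamma^{-1}$ to restore admissibility on the coupled cell $\Box$, as a competitor for the $\gamma$-problem. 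The difference of the two energies is an integral against $\mathbb L(x',x_3,y)-\bar{\mathbb L}(x',y)$, where $\bar{\mathbb L}$ is the effective coefficient seen by the decoupled problem; hypothesis~\eqref{eq:alpharegularity} directly controls this pointwise difference. Crucially, the cancellation of the first-order $x_3$-moment against $\phi^\infty$, coming from the Euler--Lagrange equation satisfied by $\phi^\infty$ together with the odd/even structure in $x_3$ forced by orthotropy, upgrades the naive $O(\gamma^{-\alpha})$ bound to the quadratic rate $O(\gamma^{-2\alpha})$ claimed in \eqref{eq:q-rate-convergence}.

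For part (b), I would use the characterization of $B^\gamma_{\eff}$ as the argmin of the $Q^\gamma_{\eff}$-based quadratic functional perturbed by the linear prestrain contribution from $B$. Writing $B^\gamma_{\eff}-B^\infty_{\eff}$ as a linear image of the difference of the associated correctors and invoking the uniform ellipticity of $Q^\gamma_{\eff}$ provided by the lower bound of part (a), together with the same comparison as in Step~2, gives the rate $O(\gamma^{-\alpha})$; the exponent drops from $2\alpha$ to $\alpha$ because the prestrain enters linearly rather than quadratically. For part (c), the residual energy $\mathcal I^\gamma_{\rm res}$ splits into a quadratic piece inherited from part~(a) (of order $\gamma^{-2\alpha}$) and a piece that is linear in $B$. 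The latter is estimated by mollifying $B$ in $x_3$ at scale $\gamma^{-1}$; the regularity hypothesis $B\in L^\infty(S,C^\alpha(I,L^2(\mathcal Y)))$ (respectively $W^{1,\infty}$ in $x_3$ when $\alpha=1$) ensures the mollification error is of order $\gamma^{-\alpha}$, which yields the stated bound.

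The main obstacle is achieving the \emph{quadratic} rate $\gamma^{-2\alpha}$ in part (a) rather than only $\gamma^{-\alpha}$. A direct comparison of the two cell problems using just the pointwise H\"older estimate on $\mathbb L$ yields only the linear rate; extracting the quadratic rate requires a cancellation in the cross term between the Kirchhoff-type $x_3$-part of the corrector and its $y$-part, and this cancellation relies on both orthotropy (which keeps the in-plane and transverse sectors uncoupled in the quadratic form) and on the Euler--Lagrange optimality of $\phi^\infty$ (which kills the first-order moment in $x_3$). Making this cancellation rigorous while simultaneously ensuring that the constructed $\phi^\gamma$ is admissible for the coupled $\gamma$-cell problem, with the correct periodicity and the right scaling of the $x_3$- and $y$-derivatives, is the technical core of the argument.
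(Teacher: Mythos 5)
Your overall strategy coincides with the paper's (use the $\infty$-corrector, regularized in $x_3$ at scale $\gamma^{-1}$, as a competitor in the $\gamma$-cell problem, kill the cross terms via orthotropy and the Euler--Lagrange equation so that only squared errors survive, then derive (b) and (c) from the resulting corrector estimates), but two of your key ingredients are not correct as stated, and the actual technical core is missing. First, the claimed structure of the $\infty$-problem --- an in-plane minimizer depending only on $y$ plus an explicit pointwise Kirchhoff--Love minimization in $x_3$, so that the energy difference becomes an integral against $\mathbb L(x',x_3,y)-\bar{\mathbb L}(x',y)$ controlled pointwise by \eqref{eq:alpharegularity} --- does not hold here: since $\mathbb L$ depends on $x_3$ (only H\"older) and the data is $\iota(x_3G)$, the $\infty$-corrector genuinely depends on $x_3$ and is in no sense explicit. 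What the hypothesis \eqref{eq:alpharegularity} actually buys, and what your argument needs but never establishes, is that the $\infty$-corrector is $C^{\alpha}$ (resp.\ $W^{1,\infty}$) in $x_3$ with values in $L^2(\mathcal Y)$; this is proved in the paper by a separate difference-quotient energy argument (Lemma~\ref{lem:regularity}) and is exactly what makes the mollification errors quantifiable ($\|\nabla'\varphi_\infty\ast\mu_l-\nabla'\varphi_\infty\|\lesssim l^{\alpha}$ and $\|\partial_3(\varphi_\infty\ast\mu_l)\|\lesssim l^{\alpha-1}$, Lemma~\ref{lem:convolution}), yielding $l^{2\alpha}+l^{2(\alpha-1)}\gamma^{-2}$ and hence $\gamma^{-2\alpha}$ at $l=\gamma^{-1}$. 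Without this regularity step your construction produces no rate. Relatedly, orthotropy does not force any ``odd/even structure in $x_3$''; it is the algebraic splitting $Q(\iota(F)+d\otimes e_3)=Q(\iota(F))+Q(d\otimes e_3)$, whose role is to show that the $\infty$-corrector has vanishing third component and $d_\infty=0$, so that the cross term against $(0|0|\partial_3\varphi_\infty\ast\mu_l)$ vanishes exactly (evenness in $x_3$ is a separate, alternative hypothesis and only gives the upper bound, cf.\ Remark~\ref{rem:even}).

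Second, your first route to the lower bound $Q^{\infty}_{\eff}\le Q^{\gamma}_{\eff}$ fails: Theorem~\ref{T:gammainfty} is a statement about the limit $\gamma\to\bar\gamma$ and gives no inequality at fixed $\gamma$, and there is no inclusion $\HH^{\gamma}_{\rm rel}\subset\HH^{\infty}_{\rm rel}$ because $\tfrac1\gamma\partial_3\varphi$ depends on $y$ while the $d$-field in $\HH^{\infty}_{\rm rel}$ may not; the inequality genuinely requires orthotropy, and the correct argument (close to your second, vaguer suggestion) is to truncate the $\gamma$-corrector to $(\varphi_\gamma',0)$ with $d=0$ and use the splitting of $Q$ to discard the transverse energy. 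Finally, in (c) mollifying $B$ itself is not enough: after replacing $B$ by $B\ast\mu_l$ you still must compare $Q^\gamma_{\rm ext}$ and $Q^\infty_{\rm ext}$, which again requires regularizing the corrector associated with $B$; the regularity hypothesis on $B$ enters precisely to give that corrector $C^{\alpha}$-regularity in $x_3$ via the same lemma, and since orthotropy cannot cancel the cross terms for general data $B$, one only obtains the rate $\gamma^{-\alpha}$ there by Cauchy--Schwarz. Your parts (a)--(c) are therefore architecturally right, but the proposal as written lacks the corrector-regularity lemma and rests in part on structural claims (explicit decoupling, parity from orthotropy, lower bound from qualitative convergence) that do not survive scrutiny.
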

\noindent
\begin{remark}
\label{rem:even}
    In the setting of Theorem \ref{T:quant}, if $Q$ is not orthotropic, but is even in $x_{3}$, it still holds that there exists a constant $C$ such that for all $\gamma \in (0,\infty )$,
\begin{equation}
 Q^{\gamma}_{\eff} (x',G) \leq Q^{\infty}_{\eff} (x',G) + \frac{C |G|^{2}}{\gamma^{2 \alpha}},
\end{equation} 
for a.e. $x' \in S$ and all $G\in\R^{2\times 2}$.
\end{remark}
\noindent
The proof of Theorem \ref{T:quant} is found in Section~\ref{sect:quant}, and the proof of Remark \ref{rem:even} is contained in the proof of Theorem \ref{T:quant}, Substep 2.1. 
\smallskip

If we do not assume any additional structure on $Q$, i.e. if we drop the hypothesis that $Q$ is orthotropic or even in $x_{3}$, our methods only allow us to obtain a partial result for the convergence of $Q^{\gamma}_{\eff}$, see Remark \ref{T:quant3}. However, the assumption that $Q$ satisfies additional symmetry or structure (i.e. that $Q$ is orthotropic) is very natural, and has been used, for example, in \cite{bohnlein2023homogenized}. 

\begin{remark}
\label{T:quant3}
     Assume the same hypotheses as in Theorem \ref{T:quant} except that $Q$ is orthotropic and $\alpha > \frac{1}{2}$, then there exists a constant $C$ such that for all $\gamma \in (0,\infty )$,
    \begin{equation}
    Q^{\gamma}_{\eff} (x',G) \leq Q^{\infty}_{\eff} (x',G) + \frac{C |G|^{2}}{\gamma^{ \alpha (2 \alpha - 1)}},
    \end{equation} 
    for a.e. $x' \in S$ and all $G\in\R^{2\times 2}$.
\end{remark}
\noindent
The proof is found in Section~\ref{sect:quant}. 
\smallskip

Next to H\"older-continuity in $x_3$, we consider material laws that are piece-wise constant in $x_3$ direction. These are especially interesting for applications, since they describe laminates. Although these laminates are not covered by Theorem \ref{T:quant}, it is possible to use the same ideas to prove a rate of convergence for laminates.

\begin{theorem}
\label{T:quant2}
Let $Q$ and $B$ be an admissible quadratic form and prestrain, respectively. Let $Q^\gamma_{\eff}$, $B^\gamma_{\rm eff}$ and $\mathcal I^\gamma_{\rm eff}$ be defined as in Section~\ref{sect:Qeff}. 
Assume that $Q$ is orthotropic and piece-wise constant in $x_{3}$, that is:
\begin{equation}
\label{eq:Qpiecewise}
Q (x',x_{3}, y, G) = \sum_{i=1}^{N} \mathbf{1}_{[p_{i}, p_{i+1})}(x_{3}) Q_{i}(x',y, G),
\end{equation}
where $\{p_{i}\}_{i=1}^{N}$ are the endpoints of a partition of $I$. 

Then there holds:
\begin{itemize}
	\item[(a)] There exists a constant $C$ such that for all $\gamma \in (0,\infty )$,
	\begin{equation}
		Q^{\infty}_{\eff} (x',G)\leq Q^{\gamma}_{\eff} (x',G) \leq Q^{\infty}_{\eff} (x',G) + \frac{C |G|^{2}}{\gamma},
	\end{equation} 
	for a.e. $x' \in S$ and all $G\in\R^{2\times 2}$.
	\item[(b)] There exists a constant $C$ such that such that for all $\gamma \in (0,\infty )$ and for a.e. $x' \in S$,
	\begin{equation}
		\left| B^{\gamma}_{\eff}(x') - B^{\infty}_{\eff}(x') \right| \leq \frac{C}{\gamma^{\frac12}}. 
	\end{equation}
	\item[(c)] Assume that 
	$B$ is given by
	\begin{equation}
		\label{eq:Bpiecewise}
		B(x',x_{3}, y) = \sum_{i=1}^{N} \mathbf{1}_{[p_{i}, p_{i+1})}(x_{3}) B_{i}(x',y),
	\end{equation}
	with $B_{i} \in L^{2}(S \times \mathcal{Y}, \R^{3})$.
	Then there exists a constant $C$ such that for all $\gamma \in (0,\infty )$,
	\begin{equation*}
		\left| \mathcal I_{\rm res}^{\gamma} - \mathcal I_{\rm res}^{\infty} \right| \leq \frac{C}{\gamma^{\frac12}}. 
	\end{equation*}
\end{itemize}
\end{theorem}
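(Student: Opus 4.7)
My proof would parallel the argument for Theorem~\ref{T:quant}, replacing the Hölder-continuous framework by an explicit boundary-layer construction adapted to the finite jump set $\{p_2,\ldots,p_N\}$ of the piecewise constant data. Recalling from Section~\ref{sect:Qeff} that $Q^\gamma_{\eff}(x',G)$, $B^\gamma_{\eff}(x')$, and $\mathcal I^\gamma_{\rm res}$ are defined as infima of quadratic integrals on $\Box=I\times\mathcal Y$ involving a two-scale gradient that couples $\nabla_y$ with $\gamma^{-1}\partial_{x_3}$, the limit $\gamma=\infty$ corresponds to dropping this coupling, so that the cell problem decouples fiber-wise in $x_3$. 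The lower bound $Q^\infty_{\eff}\le Q^\gamma_{\eff}$ in part (a) is immediate from the variational setting, since averaging a $\gamma$-competitor in $x_3$ (equivalently, ignoring its $x_3$-derivative) produces an admissible $\infty$-competitor with no greater energy.

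For the upper bound in part (a), I would take $(\varphi^\infty,d^\infty)$ optimal for $Q^\infty_{\eff}(x',G)$. Because $Q$ is piecewise constant in $x_3$, cf.\ \eqref{eq:Qpiecewise}, the slabs $(p_i,p_{i+1})$ decouple and one may choose $\varphi^\infty$ to be piecewise constant in $x_3$ as a map into $H^1_{\per}(\mathcal Y;\R^3)$, with jumps only at the points $p_i$. I would then build a test field $\varphi^\gamma_\ell$ for the $\gamma$-problem by linearly interpolating $\varphi^\infty$ across $x_3$-intervals of width $\ell$ centered at each $p_i$. The excess energy splits into (i) a bulk mismatch supported on the boundary-layer strips, where the quadratic integrand is bounded by $C|G|^2$, contributing $O(\ell|G|^2)$, and (ii) the coupling term $\gamma^{-1}\partial_{x_3}\varphi^\gamma_\ell$, whose square integrates to $O(\gamma^{-2}\ell^{-1}|G|^2)$ since the jump of $\varphi^\infty$ across each $p_i$ is controlled by $C|G|$. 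Orthotropy of $Q$ is essential here: it kills the cross term between the in-plane strain and the $(\partial_{x_3}\varphi^\gamma_\ell)\otimes e_3$ contribution in the polarization identity, so that no additional $O(\gamma^{-1}|G|^2)$ term survives. Optimizing $\ell\sim\gamma^{-1}$ then yields the claimed rate $\gamma^{-1}$.

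Parts (b) and (c) would follow from the same construction combined with the standard observation that differences of minimizers of uniformly convex quadratic functionals are controlled by the square root of the differences of the energies. Applied to the affine problems defining $B^\gamma_{\eff}$ and $\mathcal I^\gamma_{\rm res}$, and using that $B$ is piecewise constant in $x_3$ by \eqref{eq:Bpiecewise} (so the localized boundary-layer construction applies to the $B$-dependent data as well), this upgrades the $\gamma^{-1}$ energy-level rate to the $\gamma^{-1/2}$ rate for the minimizers themselves. The main technical obstacle is constructing $\varphi^\gamma_\ell$ inside the correct admissible class (respecting $y$-periodicity, the structural coupling to the Cosserat-type variable $d$, and any conditions at $\partial I$) while maintaining the sharp $O(\ell|G|^2)+O(\gamma^{-2}\ell^{-1}|G|^2)$ accounting uniformly in $x'\in S$; once this bookkeeping is in place the orthotropic cancellation used in Theorem~\ref{T:quant} carries over verbatim, so no new ideas beyond the boundary-layer construction are required.
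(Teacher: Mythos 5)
Your upper-bound construction is essentially the paper's own argument. The paper likewise exploits the special $x_3$-structure of the $\gamma=\infty$ corrector (which, note, is piecewise \emph{affine} rather than piecewise constant in $x_3$, since the data $\iota(x_3G)$ is linear in $x_3$ and the constant $M$ couples the slabs -- a harmless correction to your setup, as the affine part only adds an $O(\gamma^{-2}|G|^2)$ contribution to the coupling term), regularizes it in $x_3$ on a scale $l$ -- by convolution with a mollifier $\mu_l$ instead of your linear interpolation -- uses $\varphi\ast\mu_l+\gamma D_\infty$ as a test field in the $\gamma$-problem, cancels the cross term with the $e_3$-column by orthotropy and the cross term with the in-plane mismatch by the Euler--Lagrange equation of the $\infty$-corrector (a cancellation you leave implicit under ``carries over verbatim''), and optimizes $l\sim\gamma^{-1}$; parts (b) and (c) then follow exactly as you indicate, from the quadratic structure and the strain-closeness implied by the energy rate.

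There is, however, a genuine gap in your justification of the lower bound $Q^{\infty}_{\eff}\le Q^{\gamma}_{\eff}$ in part (a): it is not ``immediate from the variational setting''. A $\gamma$-competitor $\iota(M)+\sym(\nabla'\varphi\,|\,\gamma^{-1}\partial_3\varphi)$ is not an element of $\HH^{\infty}_{\rm rel}$, because its third column $\gamma^{-1}\partial_3\varphi$ depends on $y$, whereas the field $d$ in $\HH^{\infty}_{\rm rel}$ must be $y$-independent; and neither averaging in $x_3$ (Jensen does not apply against the $x_3$-dependent data $\iota(x_3G)$, and the averaged third column still depends on $y$) nor ``ignoring the $x_3$-derivative'' yields a competitor with no greater energy, since discarding the $e_3$-column changes the energy by cross terms of indefinite sign. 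The inequality is obtained in Substep 2.2 of the proof of Theorem \ref{T:quant} precisely through orthotropy: one keeps only the in-plane components $\varphi_\gamma'$ of the $\gamma$-corrector with $d=0$ and uses the splitting $Q(\iota(F)+d\otimes e_3)=Q(\iota(F))+Q(d\otimes e_3)$ to drop the third-column energy. This is not cosmetic: without orthotropy this ordering is out of reach of these methods, cf.\ Remark \ref{rem:orth}. The repair uses the same orthotropic splitting you already invoke for the upper bound, so your proposal is fixed by replacing the averaging sentence with that argument.
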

\begin{remark}
\label{rem:even2}
In the setting of Theorem \ref{T:quant2}, if $Q$ is not orthotropic, but is even in $x_{3}$, it still holds that there exists a constant $C$ such that for all $\gamma \in (0,\infty )$,
\begin{equation}
 Q^{\gamma}_{\eff} (x',G) \leq Q^{\infty}_{\eff} (x',G) + \frac{C |G|^{2}}{\gamma},
\end{equation} 
for a.e. $x' \in S$ and all $G\in\R^{2\times 2}$.
\end{remark}
We will not give the full proof of Theorem \ref{T:quant2} since it is very similar to the proof of Theorem \ref{T:quant}. Instead, we will only indicate the points at which the proofs differ. This is done in Section~\ref{sect:quant}.  

\subsection{Examples}
\label{Ss:numerical}

In this subsection, we look at two examples of the theory developed in Subsection \ref{sect:rate}. The first example satisfies the hypotheses of Theorems \ref{T:quant}, and we will prove analytically that the rates of convergence proved are optimal. For the second example, we will consider a setup that does not satisfy the hypotheses of Theorems \ref{T:quant} or \ref{T:quant2}, and derive rates of convergence numerically. 

\subsubsection{Analytic example} \label{sec:analytic-example}

The first example we will analyze in this section is based on the setup considered in \cite{bohnlein2023homogenized}, Section 4. In this example, the elastic law is as in \cite{bohnlein2023homogenized}: it is isotropic, homogeneous and given by 
\begin{equation}
\label{eq:Q1}
    Q\left( x', x_{3}, y, G \right) = 2 \mu (y_{1}) \left| G \right|^{2},
\end{equation}
where the function $\mu :  \mathbb{T} \to \mathbb{R}$ is given by 
\begin{equation}
\label{eq:Q2}
    \mu (y_{1}) = 
    \begin{cases}
        \mu_{1} &\text{ if } |y_{1}|< \frac{\theta}{2} \\
        \mu_{2} &\text{ otherwise.} 
    \end{cases}
\end{equation}
Figure \ref{fig:examplesetup} shows an illustration of this setup. Note that this setup is homogeneous (there is no $x'$ dependence) and so the effective quantities are also homogeneous. 

\begin{figure}[h]
    \centering
    \includegraphics[width=.5\textwidth, trim={20cm 0 0 0 }, clip]{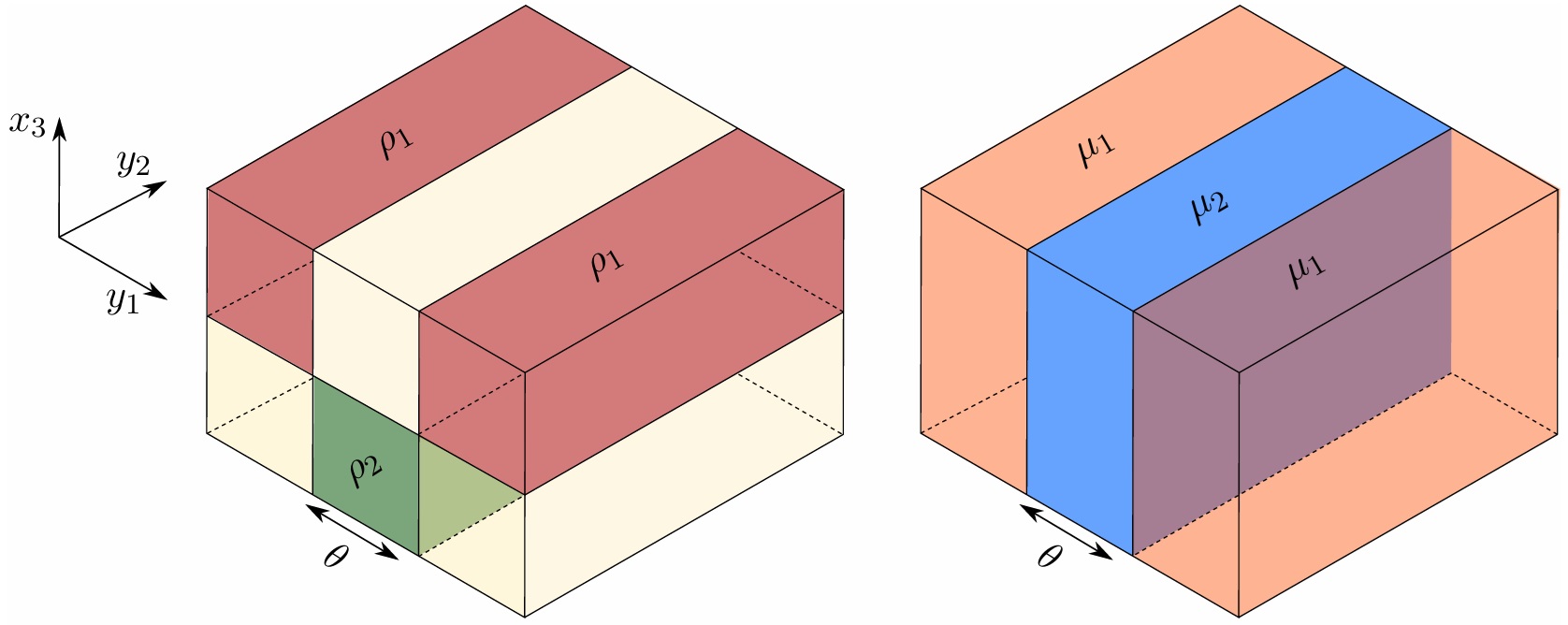}
    \includegraphics[width=.4\textwidth, trim={17cm 0cm 17cm 3cm }, clip]{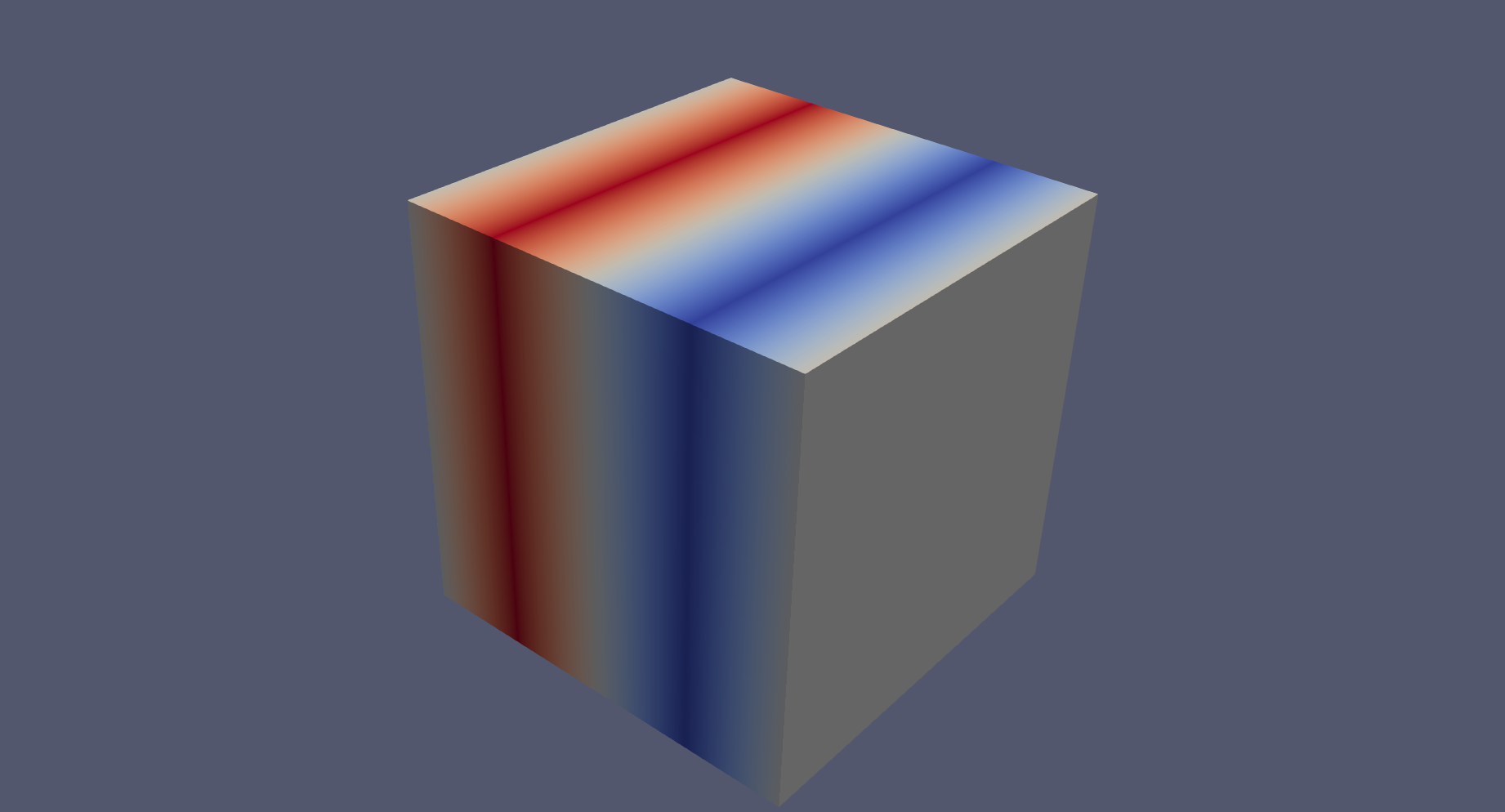}
    \caption{
    Illustration of the example. Left: Elastic law $\mu$, Right: Plot of the $(2,3)$ component of the prestrain $B_{23}(x', x_{3}, y)$ with parameters $\theta = 1/2, \mu_1 = 1, \mu_2 = 2$. The $2,3$ component is the only nonzero component of $B$ in this example. The color here ranges from blue for -0.12 to red for 0.12.
    }
    \label{fig:examplesetup}
\end{figure}

The convergence rate for $B^{\gamma}_{\eff}$ predicted by Theorem \ref{T:quant} as $\gamma \to \infty$ can clearly not be optimal for \emph{all prestrains} $B$. Indeed, if $B=0$, then $B^{\gamma}_{\eff}$ is $0$, and in particular the convergence rate is faster than the one in Theorem \ref{T:quant}. In \cite{bohnlein2023homogenized}, the authors consider a piece-wise constant prestrain that also results in a $B^{\gamma}_{\eff}$ which is independent of $\gamma$. Given the previous observation, we ask if the convergence rate for $B^{\gamma}_{\eff}$ in Theorem \ref{T:quant} is optimal for \emph{one prestrain} $B$. The answer turns out to be positive, and the indicated prestrain is given by 
\begin{equation}
\label{eq:B1}
    B(x', x_{3}, y) = (0,0,\partial_3\varphi^3_\infty)
\end{equation}
where $\varphi^3_\infty$ is the corrector associated with $G_3$ at $\gamma = \infty$ in the sense of Lemma~\ref{L:corrector}. We illustrate the third column of this prestrain in Figure \ref{fig:examplesetup}.

Note that in this example, $Q$ is independent of $x_{3}$, and therefore satisfies equation \eqref{eq:alpharegularity} with $\alpha=1$. Hence, Theorem \ref{T:quant} gives a rate of convergence of $\frac{1}{\gamma^{2}}$ for $Q^{\gamma}_{\eff} $ and $\frac{1}{\gamma}$ for $B^{\gamma}_{\eff}$. The proposition below shows that these rates are sharp for this example.
\begin{proposition}
\label{prop:example}
    Let $Q$ be given by equations \eqref{eq:Q1} and \eqref{eq:Q2} and $B$ given by equation \eqref{eq:B1}. Then 
    \begin{itemize}
        \item[(a)] There exist constants $c,C$ such that
        \begin{equation}
            \frac{c \left| G \right|^{2}}{\gamma^{2}} \leq \left| Q^{\gamma}_{\eff} (G) - Q^{\infty}_{\eff} (G) \right| \leq \frac{C \left| G \right|^{2}}{\gamma^{2}}. 
        \end{equation}

        \item[(b)] There exist constants $c,C$ such that
        \begin{equation}
             \frac{c}{{\gamma}} \leq \left| B^{\gamma}_{\eff} - B^{\infty}_{\eff} \right| \leq \frac{C}{{\gamma}}.
        \end{equation}
    \end{itemize}
\end{proposition}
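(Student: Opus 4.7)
Both upper bounds are direct applications of Theorem~\ref{T:quant}: the elastic law $Q$ defined by \eqref{eq:Q1}--\eqref{eq:Q2} is isotropic (hence orthotropic) and independent of $x_3$, so \eqref{eq:alpharegularity} holds with $\alpha=1$. The prestrain $B$ in \eqref{eq:B1} inherits the required regularity from $\varphi^3_\infty$ via Lemma~\ref{L:corrector}, so part (c) of Theorem~\ref{T:quant} applies as well. This yields $Q^{\gamma}_{\eff}(G)-Q^{\infty}_{\eff}(G)\le C|G|^2/\gamma^2$ and $|B^{\gamma}_{\eff}-B^{\infty}_{\eff}|\le C/\gamma$. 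The content of the proposition is thus to show that these rates are sharp. The plan is to derive the two-term asymptotic expansion of the corrector $\varphi^\gamma_G$ of Section~\ref{sect:Qeff} as $\gamma\to\infty$ and to read off explicit formulas for the leading-order deviations of $Q^{\gamma}_{\eff}$ and $B^{\gamma}_{\eff}$.

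For (a), I would write
\[
\varphi^\gamma_G=\varphi^\infty_G+\gamma^{-1}\psi_1^G+\gamma^{-2}\psi_2^G+\dots,
\]
where $\varphi^\infty_G$ is the corrector of the $\gamma=\infty$ problem, and substitute this ansatz into the corrector functional defining $Q^{\gamma}_{\eff}(G)$. The first-order optimality of $\varphi^\infty_G$ kills the $O(\gamma^{-1})$ contribution to the energy, leaving
\[
Q^{\gamma}_{\eff}(G)=Q^{\infty}_{\eff}(G)+\gamma^{-2}\mathcal D(G)+o(\gamma^{-2}),
\]
where $\mathcal D$ is an explicit nonnegative quadratic form in $G$. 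Because $\mu$ is piecewise constant in $y_1$ and takes two distinct values, the cell problem determining $\psi_1^G$ reduces to a one-dimensional problem with transmission conditions at $y_1=\pm\theta/2$, in the spirit of the explicit formulas derived in \cite{bohnlein2023homogenized}, Section~4. An elementary algebraic computation then gives $\mathcal D(G)\ge c|G|^2$ with $c>0$, from which the lower bound $Q^{\gamma}_{\eff}(G)-Q^{\infty}_{\eff}(G)\ge c|G|^2/\gamma^{2}$ follows.

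For (b), the key point is that $B^{\gamma}_{\eff}$ is a \emph{linear} functional of the prestrain $B$, obtained by pairing $B$ against the correctors $\varphi^\gamma_G$ through $\mathbb L$. Substituting the expansion above together with the specific choice $B=(0,0,\partial_3\varphi^3_\infty)$ produces
\[
B^{\gamma}_{\eff}=B^{\infty}_{\eff}+\gamma^{-1}\mathcal R+o(\gamma^{-1}),
\]
where $\mathcal R$ is an explicit cross integral involving $\psi_1^G$ and $\partial_3\varphi^3_\infty$ weighted by $\mu$. The choice \eqref{eq:B1} is designed precisely so that, via the duality between the corrector equation and its test functions, this cross integral reduces to a quantity proportional to $\int_\Box\mu(y_1)|\partial_3\varphi^3_\infty|^2\,\dd y\,\dd x_3$, which is strictly positive by non-triviality of $\varphi^3_\infty$. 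This produces the matching lower bound $|B^{\gamma}_{\eff}-B^{\infty}_{\eff}|\ge c/\gamma$.

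The principal technical obstacle will be the rigorous justification of the formal expansion of $\varphi^\gamma_G$ and the verification that the scalar constants $\mathcal D(G)$ and $\mathcal R$ are strictly positive. Thanks to the piecewise constant structure of $\mu$ in $y_1$ and the specific form of $B$, both reduce to one-dimensional algebraic calculations with jump conditions, closely paralleling the explicit formulas already obtained in \cite{bohnlein2023homogenized}, Section~4.
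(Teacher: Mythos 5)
Your upper bounds via Theorem~\ref{T:quant} are fine, but the substance of the proposition is the lower bounds, and there your plan has genuine gaps. The paper's proof rests on a structural observation that your expansion $\varphi^\gamma_G=\varphi^\infty_G+\gamma^{-1}\psi_1^G+\gamma^{-2}\psi_2^G+\dots$ misses: in this example the corrector is \emph{exactly} independent of $\gamma$. For $G_3$ one has $\varphi^3_\gamma=(0,w_*,0)$ where $w_*$ is the $\gamma=\infty$ minimizer \eqref{eq:wstar-inf}; since $w_*$ is linear in $x_3$, $\partial^2_{33}w_*=0$, so $w_*$ solves the Euler--Lagrange equation of \eqref{eq:correctorlaminate} for every finite $\gamma$. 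Hence $\psi_1^G=\psi_2^G=0$, the $\gamma^{-2}$ deviation is produced by the term $\tfrac1\gamma\partial_3\varphi^3$ in $\nabla_\gamma$ acting on the \emph{unchanged} corrector, and one gets the exact identity $Q^\gamma_{\eff}(G_3)-Q^\infty_{\eff}(G_3)=\gamma^{-2}\int_\Box \mu\,|\partial_3 w_*|^2\,\dd(x_3,y)>0$ (positive because $\mu$ is non-constant, so $\partial_3 w_*\not\equiv0$). In particular there is no remainder $o(\gamma^{-2})$ to justify --- the ``principal technical obstacle'' you flag evaporates --- and the leading constant is not determined by any cell problem for $\psi_1^G$, contrary to your description. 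Two further problems: (i) your claimed uniform positivity $\mathcal D(G)\ge c|G|^2$ is false here, since for $G\in\{G_1,G_2\}$ the corrector strains are $\gamma$-independent (see \cite[Subsection 7.7]{bohnlein2023homogenized}) and $Q^\gamma_{\eff}(G_i)=Q^\infty_{\eff}(G_i)$ exactly; the sharp rate is carried only by the $G_3$ direction, which is all the example is meant to show, so an attempt to prove $\mathcal D\gtrsim \mathrm{Id}$ will fail. (ii) A matching ansatz as $\gamma\to\infty$ must in general include an $O(\gamma)$ term $\gamma D(x_3)$ to generate the field $d$ of $\HH^\infty_{\rm rel}$; your ansatz silently assumes $d_\infty=0$, which in this example has to be justified by the orthotropy/isotropy argument (Step~1 of the proof of Theorem~\ref{T:quant}) or by the explicit formulas of \cite{bohnlein2023homogenized}.

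For (b) your pairing argument captures the right quantity, $|\widehat B_3-\widehat B_3^\gamma|=\gamma^{-1}\int_\Box \mu\,|\partial_3 w_*|^2\,\dd(x_3,y)$ --- again an exact identity once one knows $\varphi^3_\gamma=\varphi^3_\infty$ --- but it does not yet give a lower bound on $|B^\gamma_{\eff}-B^\infty_{\eff}|$, because $B^\gamma_{\eff}=\sum_i\big((\widehat Q^\gamma)^{-1}\widehat B^\gamma\big)_iG_i$ depends on $\gamma$ through \emph{both} $\widehat B^\gamma$ and $(\widehat Q^\gamma)^{-1}$. You must rule out cancellation by the $\gamma$-dependence of $(\widehat Q^\gamma)^{-1}$; the paper does this with a triangle inequality, using part~(a) to get $\big|\big((\widehat Q^\gamma)^{-1}-(\widehat Q^\infty)^{-1}\big)\widehat B^\gamma\big|\le C\gamma^{-2}$ and hence $|B^\gamma_{\eff}-B^\infty_{\eff}|\ge c\gamma^{-1}-C\gamma^{-2}$. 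This step, and the corrector-rigidity observation above, are what make the paper's argument a short exact computation rather than an asymptotic expansion needing justification; your outline as written does not close either point.
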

The proof is found at the end of Section~\ref{sect:quant}. 



We now present computational verification of the rates of convergence for this example. The solution to the corrector problem for $\gamma <\infty$ (see \cite[Eq.\ (20)]{bohnlein2023homogenized}) is computed with a finite element method using the C++ library Dune \cite{blatt2016distributed} for values $\gamma = 2^k$ for $k=0,\ldots, 18$. We later show in the proof of Proposition \ref{prop:example} that the corrector $\varphi^3_\gamma$ is independent of $\gamma$, so the prestrain $B$ in Figure \ref{fig:examplesetup} and \eqref{eq:B1} is computed from the corrector corresponding to $\gamma=2^{18}$.  For other values of $\gamma$, the effective quantities $B_{\text{eff}}^\gamma$, $Q_{\text{eff}}^\gamma$ are computed using the formulas \cite[Proposition 2.25]{bohnlein2023homogenized}. The specific parameters used were
$$
\theta = \frac{1}{2}, \quad \mu_1 = 1, \quad \mu_2 = 2.
$$
For this example, we have exact solutions for $Q^\infty_\text{eff}$ from \cite{bohnlein2023homogenized} where $q_{11} = q_{33} = 2/9$, $q_{22} = 1/4$, and $q_{ij} = 0$ for $i\neq j$. For the purposes of this simulation, $B^\infty_\eff$, was approximated by $B^\gamma_\eff$, for $\gamma = 2^{18}$. The resulting numerical value is $B^{2^{18}}_{\eff} = 1.192\times 10^{-7} G_3$.
\begin{figure}[h]
    \centering
    \includegraphics[scale=0.4]{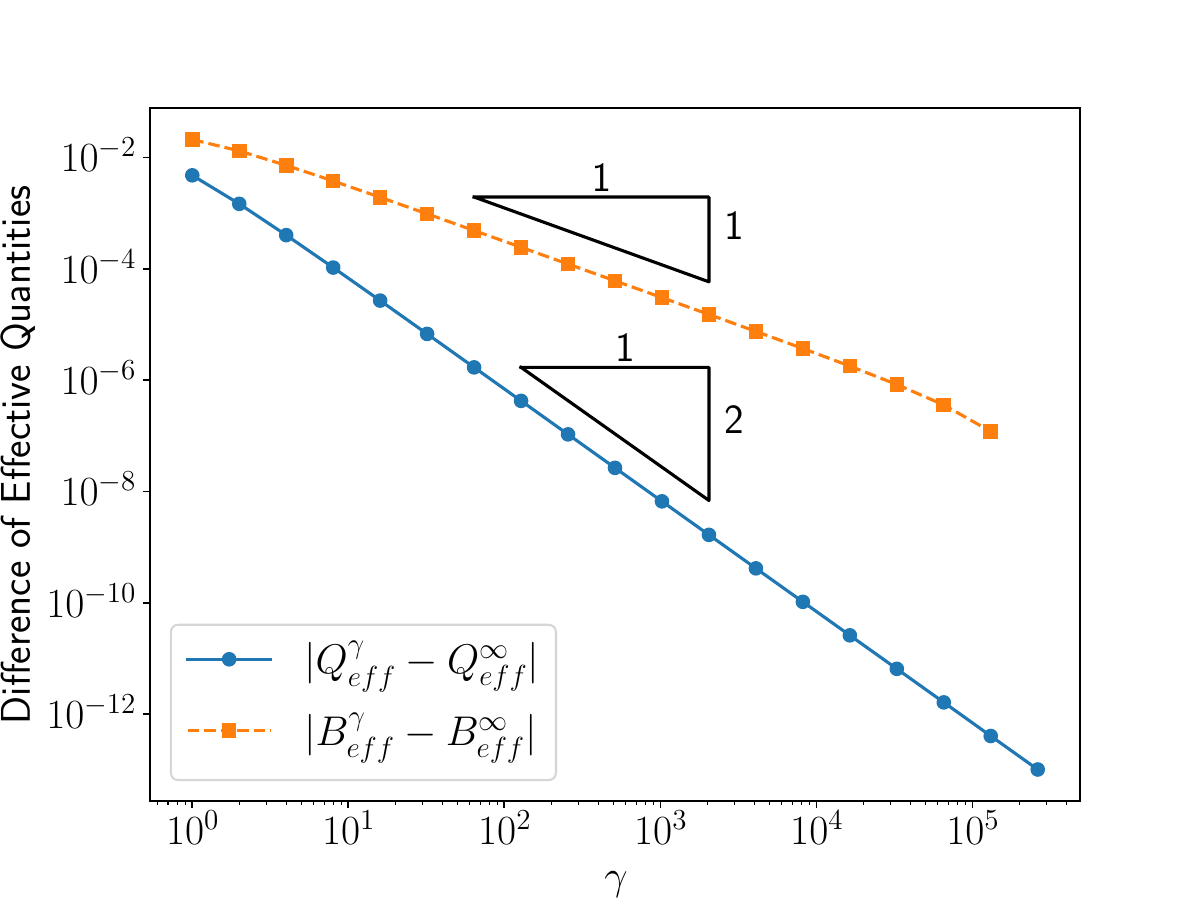}
    \caption{A log-log plot of the convergence rate for $Q^{\gamma}_{\eff},B^{\gamma}_{\eff}$. {From the slope of the line above, as indicated by the triangle, we see that the rates of convergence are approximately $\gamma^{-2}$ and $\gamma^{-1}$ respectively, which verifies that the rate of convergence in equation \eqref{eq:q-rate-convergence} of Theorem \ref{T:quant} is optimal in this example.}}
    \label{fig:convergenceQ}
\end{figure}
Figure \ref{fig:convergenceQ} shows the convergence rate of $Q^{\gamma}_{\eff}$ towards $Q^{\infty}_{\eff}$ and $B^{\gamma}_{\eff}$ towards $B^{\infty}_{\eff}$ in a log-log plot. The norm of $| Q^{\gamma}_{\eff} - Q^{\infty}_{\eff}|$ is measured as the difference between the coefficients of the corresponding matrix representations. The slope of the best-fit linear approximation in the log-log plot is approximately $-2$ for $| Q^{\gamma}_{\eff} - Q^{\infty}_{\eff}|$ and approximately $-1$ for $| B^{\gamma}_{\eff} - B^{\infty}_{\eff}|$, which verifies that the rate of convergence proved in Theorem \ref{T:quant} is optimal in this example.

\subsubsection{Numerical example}
{
Inspired by real-world examples of wood composites in \cite{bohnlein2025dimension}, we now consider an example that does not satisfy the hypotheses of Theorem \ref{T:quant2}. In particular, this setup will have a quadratic form that is not orthotropic in general. The focus of \cite{bohnlein2025dimension} was on thin sheets of wood composite bilayers and not on microheterogeneous materials, but this section explores the interaction of the length scale of the microstructure, $\varepsilon$, with the sheet thickness $h$.
}

{We consider the following quadratic form with stiffness tensor $\mathbb{L}$ \footnote{In \cite{bohnlein2025dimension}, the stiffness tensor is denoted by $\mathbb{C}$} as shown in Figure \ref{fig:wood-example} (left):
\begin{equation}\label{eq:numerical-quadratic-form2}
Q\left(x^{\prime}, x_{3}, y, G\right)= \mathbb{L}(y_1,y_2, x_3)G : G.
\end{equation}
In order to define the stiffness tensor using matrix notation, we use Voigt notation, which is a way of notating matrices and tensors. In Voigt notation, a symmetric $3\times3$ matrix $G$ is written as a vector of length 6 given by
$$\mathbf{v}(G) = (G_{11},G_{22},G_{33}, G_{23}, G_{13}, G_{12})^T.$$ 
Similarly, in Voigt notation a tensor $\mathbb{A} \in\operatorname{Lin}(\R^{3\times 3}_{sym},\R^{3\times 3}_{sym})$, is written as a $6\times 6$ matrix $\mathbf{V}(\mathbb{A})$ defined by  
$$
\mathbf{V}(\mathbb{A}) \mathbf{v}(G) : =  \mathbf{v}(\mathbb{A} G).
$$
}

{We now define the compliance tensor for European Beech $\mathbb{S}_{beech} := \mathbb{L}_{beech}^{-1}$. Wood has 3 orthogonal axes of symmetry, which are the radial direction, $e_R$, the tangential direction to the tree rings $e_T$, and the longitudinal direction, $e_L$. Choosing a coordinate frame in which the canonical axes correspond to the axes of symmetry, i.e.\ $e_1 = e_R$, $e_2 = e_T$, and $e_3=e_L$, the compliance tensor for European beech in Voigt notation is given by
\begin{equation}\label{eq:compliance-voigt}
\mathbf{V}(\mathbb{S}_{beech}(\omega))=\left(\begin{array}{cccccc}E_{1}^{-1} & -\nu_{21} E_{2}^{-1} & -\nu_{31} E_{3}^{-1} & 0 & 0 & 0 \\ -\nu_{12} E_{1}^{-1} & E_{2}^{-1} & -\nu_{32} E_{3}^{-1} & 0 & 0 & 0 \\ -\nu_{13} E_{1}^{-1} & -\nu_{23} E_{2}^{-1} & E_{3}^{-1} & 0 & 0 & 0 \\ 0 & 0 & 0 & G_{23}^{-1} & 0 & 0 \\ 0 & 0 & 0 & 0 & G_{13}^{-1} & 0 \\ 0 & 0 & 0 & 0 & 0 & G_{12}^{-1} \end{array}\right),
\end{equation}
where $E_{i}$ denotes the Young's moduli, $G_{ij}$ denotes shear moduli, and $\nu_{ij}$ denote the Poisson ratios. In equation \eqref{eq:compliance-voigt}, the elastic moduli $E_{i}, G_{ij}, \nu_{ij}$ depend on a dimensionless moisture parameter $\omega$, that can take values between $0\%$ and $100\%$. The specific dependence of the moduli used in this example are taken from \cite[Table 1]{bohnlein2025dimension} and \cite[Table B.1]{hassani2015rheological}. In this example, we set $\omega = 14.70179844\%$. Inserting these values into equation \eqref{eq:compliance-voigt} yields the following compliance tensor
\begin{equation}\label{eq:compliance-beech}
\begin{aligned}
\mathbf{V}(\mathbb{S}_{beech}&(\omega)) = \\
& 10^{-3}\cdot\begin{pmatrix}
5.92 \cdot 10^{-1} & -5.14 \cdot 10^{-1} & -1.96 \cdot 10^{-2} & 0 & 0 & 0 \\
-5.14 \cdot 10^{-1} & 1.85  & -1.57 \cdot 10^{-2} & 0 & 0 & 0 \\
-1.96 \cdot 10^{-2} & -1.57 \cdot 10^{-2} & 7.70 \cdot 10^{-2} & 0 & 0 & 0 \\
0 & 0 & 0 & 1.19  & 0 & 0 \\
0 & 0 & 0 & 0 & 7.95 \cdot 10^{-1} & 0 \\
0 & 0 & 0 & 0 & 0 & 2.25
\end{pmatrix}
\end{aligned}.
\end{equation}
}

{
In this example, the axes of symmetry will also change spatially. For more general orthogonal axes of symmetry $e_R,e_T,e_L$, the compliance tensor can be defined by using a rotation matrix $U$ and its action on a matrix $G$:
\begin{equation}\label{eq:rotated-voigt}
    \mathbb{S}_{beech}(\omega; e_R, e_T, e_L) G := U [\mathbb{S}_{beech}(\omega) (U^T GU)] U^T, \quad U := e_R\otimes e_1 +e_T\otimes e_2 +e_L\otimes e_3,
\end{equation}
where $\mathbb{S}_{beech}(\omega)$ is defined in equation \eqref{eq:compliance-voigt}. We now define  $\mathbb{L}^{-1}(y_1,y_2, x_3)$ as follows
\begin{equation}\label{eq:compliance-def}
\mathbb{L}^{-1}(y_1,y_2, x_3) = \begin{cases} \mathbb{S}_{iso} , & |y_1| > 1/4 \\
\mathbb{S}_{beech}(\omega, e_2, e_3, e_1) , & |y_1| \leq 1/4, x_3 < 0\\
\mathbb{S}_{beech}(\omega, R(\pi/4)e_2, e_3, R(\pi/4)e_1) , & |y_1| \leq 1/4, x_3 > 0.
\end{cases}
\end{equation}
We set $\mathbb{S}_{iso}$ to an isotropic elastic law with Lam\'e parameters $\mu = 1,\lambda = 0$. For the inner layers $\{|y_1| \leq 1/4, x_3 < 0\}$ and ${|y_1| \leq 1/4, x_3 > 0}$, the compliance tensor $\mathbb{S}_{beech}(\omega, e_R, e_T, e_L)$ is defined in equation \eqref{eq:rotated-voigt}. On the bottom layer, we keep the axis of the wood as $e_2,e_3,e_1$. On the top layer, we rotate about the $e_3$ axis by $\pi/4$ using the rotation matrix $R(\pi/4)$. We also note that the quadratic form $Q$ defined in equation \eqref{eq:numerical-quadratic-form2} is not orthotropic in the region ${|y_1|\leq \frac{1}{4}}$ as long as $\nu_{ij} \neq 0$ for all $i,j$. Hence, the example presented here is does not satisfy the hypotheses of Theorem \ref{T:quant2}.}

{
We consider two prestrains in this example, the first one is a uniform hydrostatic pressure on the bottom layer with no prestrain on the top layer. The second prestrain is the one previously considered in Subsection \ref{sec:analytic-example}. To be precise, the two prestrains we consider are
\begin{equation*}
B_1(x',x_3,y) = \begin{cases}
\mathrm{Id}, &x_3 < 0\\
0, &x_3 \geq0
\end{cases}, \quad B_2(x',x_3,y) = (0,0,\partial_3\varphi^3_\infty)
\end{equation*}
where $\varphi^3_\infty$ is the corrector associated with $G_3$ at $\gamma = \infty$ in the sense of Lemma~\ref{L:corrector}. 
}

{For the numerics, we apply the same procedure as the previous example but instead take the range $\gamma = 2^\ell, \ell = 0,\ldots, 9$. We then use the approximations $Q_{eff}^\infty\approx  Q_{eff}^{512}, B_{eff}^\infty\approx  B_{eff}^{512}$. We do not take larger values for $\gamma$ like the previous example because it becomes difficult to numerically approximate the effective quantities if the quadratic form is not orthotropic and $\gamma$ is large. More specifically, the error in numerically computing $Q^\gamma_{eff}$ and $B^\gamma_{eff}$ is determined by the mesh size and the $L^2$ norm of the Hessian of the corrector. In the case of non-orthotropic quadratic forms, it is possible that $\partial_{3,3}^2\varphi^3_\gamma$ becomes very large as $\gamma\to\infty$, and hence the practical accuracy of the numerical method is limited for large $\gamma$. By contrast, the example in Section \ref{sec:analytic-example} can be solved analytically and be shown to satisfy $\partial_{3,3}^2\varphi^3_\gamma = 0$, which means that the numerical method can be accurate even for large $\gamma$ in that case.
}

{
We find that the numerical values for the approximations of $Q_{eff}^{\infty},B_{eff,1}^{\infty},B_{eff,2}^{\infty}$ are 
$$
Q_{eff}^{\infty} = 
\begin{pmatrix}0.22 & 0.0038 & 1.45\\
0.0038 & 44.9 & 0.012 \\
1.45 & 0.012 & 0.22
\end{pmatrix},
 \quad B_{eff,1}^{\infty} = -1.5 G_1 - 1.47 G_2,
 \quad B_{eff,2}^{\infty} = .012 G_3. 
$$
We see in Figure \ref{fig:wood-example} (right) that as $\gamma\to\infty$, $Q_{eff}^{\gamma}\to Q_{eff}^{\infty}$ and $B_{eff,1}^{\gamma}\to B_{eff,1}^{\infty}$ with rate $\gamma^{-2}$. We also see that $B_{eff,2}^{\gamma}\to B_{eff,2}^{\infty}$ with rate $\gamma^{-1}$. This is numerical evidence to suggest that Theorem \ref{T:quant2} may hold for a broader class of quadratic forms that are not orthotropic.
}
\begin{figure}[h]
\begin{minipage}{.45\textwidth}
\includegraphics[width=1\textwidth]{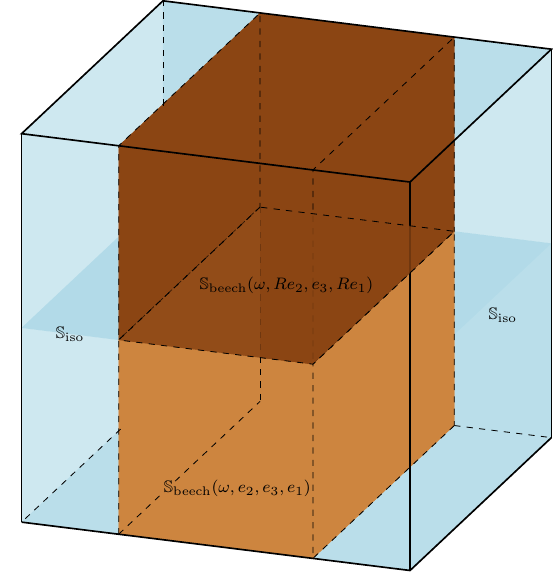}
\end{minipage}
\hfill
\begin{minipage}{.5\textwidth}
\includegraphics[width=1\textwidth]{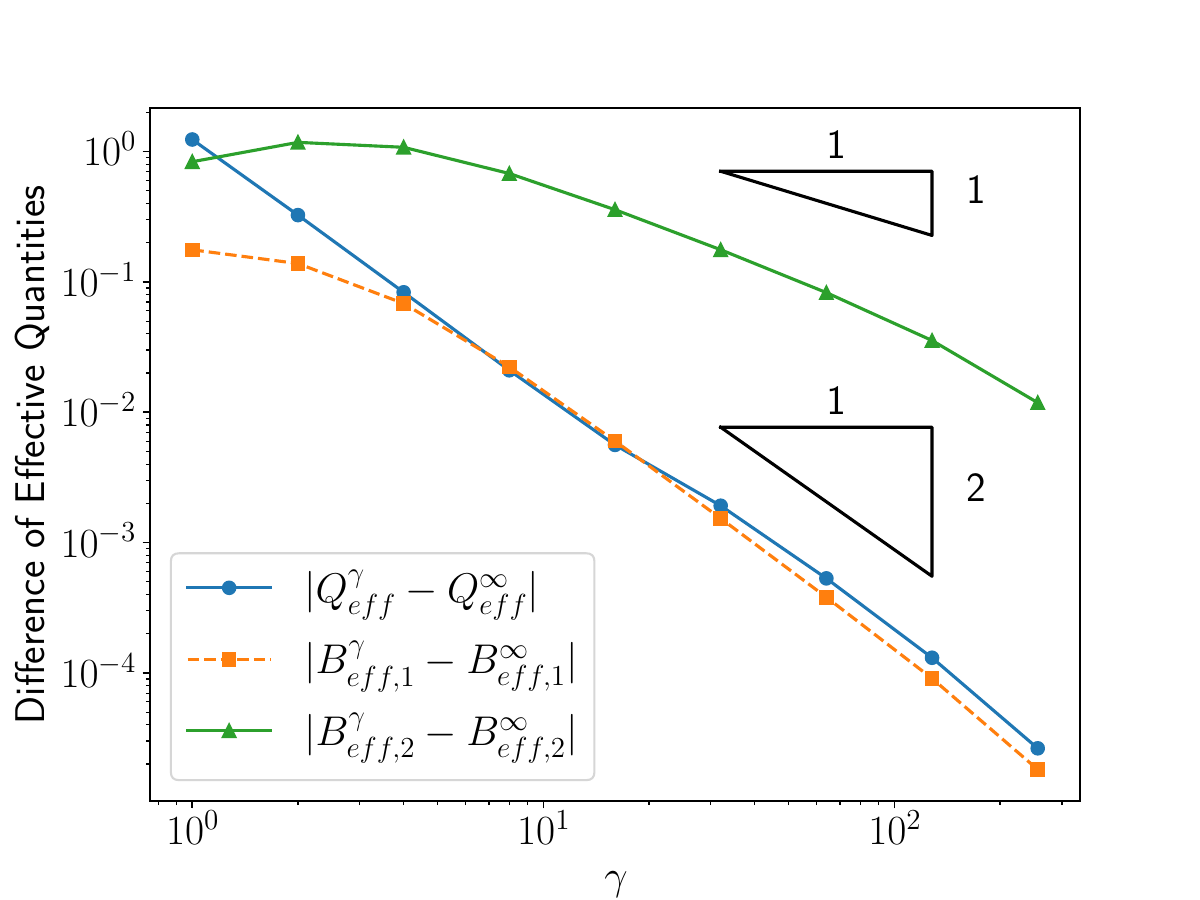}
\end{minipage}
\caption{Left: Periodic compliance tensor setup outlined in equations \eqref{eq:numerical-quadratic-form2},\eqref{eq:compliance-voigt}, and \eqref{eq:compliance-def}. Right: Difference of effective quantities as $\gamma\to\infty$. Notice that the effective stiffness converge like $\gamma^{-2}$. The effective prestrain converges like $\gamma^{-1}$ for $B_2$ and converges like $\gamma^{-1}$ for $B_1$. This shows that although Theorem \ref{T:quant2} does not apply in this example, we may expect a similar rate of convergence for the effective quantities for a larger class of quadratic forms.}
\label{fig:wood-example}
\end{figure}
 
\section{Introduction of the effective quantities}
\label{sect:Qeff}

In this section we present the definition of the effective quantities $Q^\gamma_{\eff}$, $B^\gamma_{\eff}$ and $\mathcal I_{\rm res}^\gamma$ for arbitrary $\gamma \in [0,\infty]$. The case $\gamma \in (0,\infty)$ is treated in \cite{bohnlein2023homogenized}. We recall the construction for the reader's convenience. The construction can be adapted to the extremal cases $\gamma \in \{0,\infty\}$ straightforwardly as outlined below.
The effective quadratic form $Q_{\eff}^\gamma$ is obtained by minimizing the original quadratic term $Q$ in the expansion of $W$ over a space of corrector fields, whose precise definition depends on $\gamma$.
\begin{definition}[Relaxation spaces]
Let $L^2(\mathcal{Y}, \R^{3\times 3}_{\rm sym})$ be the space of $\mathcal{Y}$-periodic functions
in $L^2_{\loc}(\R^2;\R^{3\times 3}_{\rm sym})$, let $Q$ be an admissible quadratic form. For each $x' \in S$ consider the Hilbert space
\begin{equation*}
  \HH
  \colonequals
  L^2\big(I, L^2(\mathcal{Y}, \R^{3\times 3}_{\rm sym})\big),
  \qquad
  \Big(H,H'\Big)_{x'}\colonequals\int_{\Box}\mathbb L (x', \placeholder) H :H' \,\dd (x_3,y).
\end{equation*}
Note that the induced norm satisfies
\begin{equation}
\label{def:innprod}
  \|H\|_{x'}^2=\int_{\Box}Q(x', x_3, y,H(x_3,y)) \,\dd (x_3,y).
\end{equation}
Moreover, for $\gamma \in [0,\infty]$ we introduce the subspaces
\begin{align}
    \HH^0_{\rm rel} 
    &\colonequals \begin{aligned}[t]
        \Big\{\iota(M)+ \sym (\iota(\nabla' \varphi + x_3\nabla^{'2}\zeta)+g\otimes e_3)\,:\,&M\in\R^{2\times 2}_{\sym},\, \zeta \in W^{2,2}(\mathcal{Y}) ,\, \\
        &\varphi \in W^{1,2}(\mathcal Y, \R^2),\, g \in L^{2}(I \times \mathcal Y, \R^{3})\Big\},
    \end{aligned} \\
    \HH^\gamma_{\rm rel} 
    &\colonequals \Big\{\iota(M)+ \sym (\nabla' \varphi | \tfrac{1}{\gamma}\partial_3\varphi)\,:\,M\in\R^{2\times 2}_{\sym},\,\varphi \in W^{1,2}(I \times \mathcal{Y}, \R^{3}))\Big\} \quad \text{if } \gamma \in (0,\infty), \\
    \HH^\infty_{\rm rel} 
    &\colonequals \Big\{\iota(M)+ \sym (\nabla' \varphi | d)\,:\,M\in\R^{2\times 2}_{\sym},\,\varphi \in L^{2}(I, W^{1,2}(\mathcal{Y}, \R^{3})) \, d \in L^{2}(I, \R^{3})\Big\}, 
\intertext{and}
    \HH^\gamma
    & \colonequals
    \Big\{\iota(x_3 G)+\chi\,:\,G\in\R^{2\times 2}_{\rm sym},\,\chi\in\HH^\gamma_{{\rm rel}}\Big\}.
\end{align}
Recall that
$$\iota:\R^{2\times 2}\to\R^{3\times 3},\qquad \iota(A):=\begin{pmatrix}
	A&\begin{array}{c}0\\0\end{array}\\\begin{array}{cc}0&0
	\end{array}&0
\end{pmatrix}.$$
We  denote by $\HH^{\gamma,\perp}_{{\rm rel}}$ the orthogonal
complement of $\HH^{\gamma}_{{\rm rel}}$ in $\HH^{\gamma}$ in the $(\cdot, \cdot)_{x'}$ inner product, and write $P^{\gamma,\perp}_{{\rm rel}}$
for the orthogonal projection from $\HH$ onto $\HH^{\gamma,\perp}_{{\rm rel}}$ in the $(\cdot, \cdot)_{x'}$ inner product.
Similarly, we write $P^{\gamma}$ for the orthogonal projection from $\HH$ onto $\HH^{\gamma}$ in the $(\cdot, \cdot)_{x'}$ inner product. 
\end{definition}

\begin{remark}
	We note that $\HH^\gamma_{\rm rel}$ is a closed subspaces of $\HH$, as can be seen by using Korn's inequality in combination with Poincar\'e's inequality. Also note that $\HH^{\gamma,\perp}_{{\rm rel}}, P^{\gamma,\perp}_{{\rm rel}}$ and $P^{\gamma}$ depend on $x'$. We neglect this dependence in the notation for simplicity.
\end{remark}

These relaxation spaces (also for $\gamma = 0$ and $\gamma = \infty$) have been introduced in \cite{neukamm2013derivation} in the context of the von Kármán regime. Using these spaces we introduce the homogenized quadratic forms $Q^\gamma_{\eff}$.

\begin{definition}[Homogenized quadratic form]\label{def:Qhom}
Let $Q$ be an admissible quadratic form in the sense of Definition~\ref{D:admissible} and let $\gamma \in [0,\infty]$. We define the associated homogenized quadratic form $Q^\gamma_{\ext}:S\times \HH\to[0,\infty)$ as
\begin{align}\label{eq:def:min}
    Q^\gamma_{\ext}(x',H) &\colonequals \inf_{\chi \in \HH^\gamma_{\rm rel}}\int_{\Box}Q\big(x', x_3,y,H+\chi \big)\dd (x_3,y),
\intertext{and define $Q^\gamma_{\eff}: S\times \R^{2 \times 2}_{\sym}\to[0,\infty)$ as}
    Q^\gamma_{\eff} (x',G) &\colonequals \inf_{\chi \in \HH^\gamma_{\rm rel}}\int_{\Box}Q\big(x', x_3,y,\iota(x_3 G)+\chi \big)\dd (x_3,y).
\end{align} 
\end{definition}
Before we can introduce the effective prestrain, we state the following lemma. 

\begin{lemma}\label{L:E}
Let $Q,B$ be admissible in the sense of Definition~\ref{D:admissible} and $\gamma \in [0,\infty]$. Then the map
\begin{equation*}
    \mathbf E^\gamma:S \times \R^{2\times 2}_{\sym}\to \HH^{\gamma,\perp}_{{\rm rel}},\qquad \mathbf E^\gamma(x',G)\colonequals P^{\gamma,\perp}_{{\rm rel}}\big(\iota(x_3G)\big)
\end{equation*}
is a linear isomorphism in the second variable, and
\begin{equation}
    \label{eq:ineq1'}
    \sqrt{\tfrac\alpha{12}}|G|\leq \bigg(\int_{\Box}|\mathbf E^\gamma(x',G)|^2 \dd (x_3,y) \bigg)^\frac12\leq \sqrt{\tfrac\beta{12}}|G|,
\end{equation}
for all $G\in \R^{2\times 2}_{\sym}$ and $x' \in S$. Furthermore, we have
\begin{equation*}
	Q^\gamma_{\eff} (x',G) = \|\mathbf E^\gamma(x',G)\|_{x'}^2,
\end{equation*}
and
\begin{equation}
    \label{eq:ineq2'}
    {\tfrac\alpha{12}}|G|^{2}\leq Q^{\gamma}_{\eff}(x',G) \leq {\tfrac\beta{12}}|G|^{2}.
\end{equation}
\end{lemma}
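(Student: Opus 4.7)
The plan is to exploit the Hilbert space structure of $(\HH,(\cdot,\cdot)_{x'})$ and derive everything from a single moment identity. Since $\HH^\gamma_{\rm rel}$ is a closed subspace of $\HH^\gamma$ and $\|H\|_{x'}^2=\int_\Box Q(x',\cdot,\cdot,H)\,\dd(x_3,y)$, the projection theorem yields
\begin{equation*}
Q^\gamma_{\eff}(x',G)=\inf_{\chi\in\HH^\gamma_{\rm rel}}\|\iota(x_3G)+\chi\|_{x'}^2=\|\mathbf E^\gamma(x',G)\|_{x'}^2,
\end{equation*}
where the second equality uses that $\HH^\gamma_{\rm rel}$ is a linear space (so that $-\chi$ ranges over it as $\chi$ does) and that $\HH^\gamma_{\rm rel}$ and $\HH^{\gamma,\perp}_{\rm rel}$ are $(\cdot,\cdot)_{x'}$-orthogonal complements in $\HH^\gamma$. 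Linearity of $\mathbf E^\gamma$ in $G$ is inherited from linearity of $P^{\gamma,\perp}_{\rm rel}$.

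The key analytic step is the moment identity
\begin{equation*}
\int_\Box x_3\,\pi_{2\times 2}\bigl(\iota(x_3G)+\chi\bigr)\,\dd(x_3,y)=\frac{G}{12}\qquad\text{for every }\chi\in\HH^\gamma_{\rm rel},
\end{equation*}
where $\pi_{2\times 2}$ denotes the upper-left $2\times 2$ block. I would verify this case by case: in each of the regimes $\gamma=0$, $\gamma\in(0,\infty)$, $\gamma=\infty$, the $y$-average of $\pi_{2\times 2}(\chi)$ collapses to the constant matrix $M$ by $\mathcal Y$-periodicity of $\varphi$ (and, in the case $\gamma=0$, vanishing of $\int_{\mathcal Y}\nabla'^2\zeta$ by integration by parts on the torus). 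Then $\int_I x_3\,\dd x_3=0$ annihilates the $M$-contribution and $\int_I x_3^2\,\dd x_3=\frac{1}{12}$ leaves exactly $G/12$. Applying Cauchy--Schwarz entry-wise and using $|\pi_{2\times 2}H|\leq|H|$ yields $\|\iota(x_3G)+\chi\|_{L^2(\Box)}^2\geq\frac{|G|^2}{12}$. Combined with the pointwise bound $Q(x',x_3,y,H)\geq\alpha|\sym H|^2$ and the fact that $\iota(x_3 G)+\chi$ is symmetric, this gives the lower bound $Q^\gamma_{\eff}(x',G)\geq\tfrac{\alpha}{12}|G|^2$ in \eqref{eq:ineq2'}. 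The matching upper bound follows by choosing $\chi=0\in\HH^\gamma_{\rm rel}$ in the infimum and applying $Q\leq\beta|\sym\cdot|^2$ together with $\|\iota(x_3G)\|_{L^2(\Box)}^2=\frac{|G|^2}{12}$. The bounds \eqref{eq:ineq1'} are then deduced from $Q^\gamma_{\eff}=\|\mathbf E^\gamma\|_{x'}^2$ using the two-sided equivalence $\alpha\|H\|_{L^2}^2\leq\|H\|_{x'}^2\leq\beta\|H\|_{L^2}^2$ on symmetric tensors.

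For the isomorphism claim, injectivity of $\mathbf E^\gamma(x',\cdot)$ is an immediate corollary of the lower bound, since $\mathbf E^\gamma(x',G)=0$ would force $Q^\gamma_{\eff}(x',G)=0$, hence $G=0$. For surjectivity onto $\HH^{\gamma,\perp}_{\rm rel}$ I would argue by dimension: the very definition $\HH^\gamma=\HH^\gamma_{\rm rel}+\{\iota(x_3G):G\in\R^{2\times 2}_{\rm sym}\}$ shows $\dim\bigl(\HH^\gamma/\HH^\gamma_{\rm rel}\bigr)\leq 3$, while injectivity forces equality, so $\HH^{\gamma,\perp}_{\rm rel}$ is three-dimensional and $\mathbf E^\gamma(x',\cdot)$ is an isomorphism.

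The main obstacle is the case-by-case verification of the moment identity: the three descriptions of $\HH^\gamma_{\rm rel}$ are structurally quite different (in particular $\HH^0_{\rm rel}$ contains an extra $x_3\nabla'^2\zeta$ term and a free out-of-plane field $g$), so each case requires separate inspection of the upper-left block. Once this identity is in place, the remaining steps are soft Hilbert-space reasoning.
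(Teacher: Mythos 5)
Your proposal is correct and follows essentially the same route as the paper's Appendix proof: your moment identity is exactly the paper's $L^2(\Box)$-orthogonality of $\iota(x_3G)$ against $\HH^\gamma_{\rm rel}$ (verified there by $\mathcal Y$-periodicity and $\int_I x_3\,\dd x_3=0$), your Cauchy--Schwarz step is a cosmetic variant of the paper's Pythagorean expansion, and the upper bound via $\chi=0$ coincides with the paper's use of the projection property, with the coercivity and boundedness of $Q$ entering identically. The only substantive addition is that you spell out the injectivity and dimension-count argument for the isomorphism claim, which the paper leaves implicit.
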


For $\gamma \in (0,\infty)$, the preceding lemma is \cite[Lemma 2.20]{bohnlein2023homogenized} and the proof for $\gamma = 0$ and $\gamma = \infty$ follow analogously. We give the proof in Appendix A.

\begin{definition}[Effective prestrain]\label{D:eff}
	Let $Q$ and $B$ be admissible in the sense of Definition~\ref{D:admissible}. We define the effective prestrain $B_{\eff}^\gamma \in L^{2}(S, \R^{2\times 2}_{\sym})$ associated with $Q$ and $B$ by
  \begin{equation}\label{eq:def:beff}
    B_{\eff}^\gamma(x') \colonequals(\mathbf E^\gamma(x'))^{-1}\Big(P^{\gamma,\perp}_{{\rm rel}}(\sym B(x', \cdot))\Big).
  \end{equation}
\end{definition}

Finally, having introduced $B^\gamma_{\eff}$ we can also introduce $\mathcal I^\gamma_{\rm res}$.

\begin{definition}[Residual energy]\label{D:Ires}
Let $Q$ and $B$ be admissible in the sense of Definition~\ref{D:admissible}. We define the \emph{residual energy} as
\begin{equation}
    \mathcal I_{\rm res}^\gamma \colonequals \int_{S}\int_{\Box}Q\Big(x',x_3,y,(\mathrm{Id}-P^\gamma)(\sym B(x',\cdot))\Big)\dd (x_3,y)\dd x'.
\end{equation}
\end{definition}

$Q^\gamma_{\eff}$ and $B^\gamma_{\eff}$ can also be characterized by corrector equations. For $\gamma \in (0,\infty)$, this is \cite[Lemma 2.23 and Proposition 2.25]{bohnlein2023homogenized}. In the appendix, we also show the cases $\gamma \in \{0,\infty\}$ which follows similarly.

\begin{lemma}[Existence of a corrector]\label{L:corrector}
Let $Q$ and $B$ be admissible in the sense of Definition~\ref{D:admissible} and fix $x' \in S$.

\begin{enumerate}[(a)]
  \item Let $H \in \HH^0$. There exists a unique quadruple
  \begin{equation}
      M_H^0\in\R^{2\times 2}_{\sym}, \qquad
      \zeta_H^0 \in W^{2,2}(\mathcal Y), \qquad
      \varphi_H^0 \in W^{1,2}(\mathcal{Y}, \R^{2}), \qquad
      g_H^0 \in L^{2}(I \times \mathcal Y, \R^{3}),
  \end{equation}
  with $\int_{\Box}\varphi_H^0\dd (x_3,y) =0$ and $\int_{\mathcal Y}\zeta_H^0\dd y =0$,
  solving the corrector problem
  \begin{equation}
      \int_{\Box}\mathbb L\left[H+\iota(M_H^0)+\sym(\iota(\nabla'\varphi_H^0 + x_3\nabla'^2\zeta_H^0)+g_H^0\otimes e_3)\right]: H' \,\dd (x_3,y)=0,%
    \footnote{For the rest of this section, we omit the dependence of $\mathbb L$ on $(x', x_{3}, y)$ for ease of notation.}
  \end{equation}
  for all $H' \in \HH^0_{\rm rel}$. Moreover, there exists a constant $C$ such that
      \begin{equation}
    |M_{H}^0|^{2} + \left\| \nabla' \varphi^0_{H} \right\|_{L^{2}}^{2} + \left\| \nabla'^2 \zeta^0_{H} \right\|_{L^{2}}^{2} + \|g_{H}^0\|_{L^{2}}^{2} \leq C \left\| H \right\|_{L^{2}}^{2}.
    \end{equation}
  We call $(M_H^0,\varphi_H^0, \zeta_H^0, g_H^0)$ the corrector associated with $H$ at 0.

  \item Let $\gamma \in (0,\infty)$ and $H \in \HH^\gamma$. There exists a unique pair
  \begin{equation}
      M_H^\gamma\in\R^{2\times 2}_{\sym}, \qquad
      \varphi_H^\gamma \in W^{1,2}(I \times \mathcal{Y}, \R^{3})
  \end{equation}
  with $\int_{\Box}\varphi_H^\gamma\dd (x_3,y) =0$, solving the corrector problem
  \begin{equation}
      \int_{\Box}\mathbb L\left[H+\iota(M_H^\gamma)+\sym(\nabla'\varphi_H^\gamma| \tfrac{1}{\gamma}\partial_3\varphi_H^\gamma)\right]:H'\,\dd (x_3,y)=0,
  \end{equation}
  for all $H' \in \HH^\gamma_{\rm rel}$. Moreover, there exists a constant $C$ such that
      \begin{equation}
    |M_{H}^\gamma|^{2} + \left\| (\nabla'\varphi_H^\gamma| \tfrac{1}{\gamma}\partial_3\varphi_H^\gamma) \right\|_{L^{2}}^{2} \leq C \left\| H \right\|_{L^{2}}^{2}.
    \end{equation}
  We call $(M_H^\gamma,\varphi_H^\gamma)$ the corrector associated with $H$ at $\gamma$.

  \item Let $H \in \HH^\infty$. There exists a unique triplet
  \begin{equation*}
    M_H^\infty\in\R^{2\times 2}_{\sym},\qquad 
    \varphi_H^\infty \in L^{2}(I, W^{1,2}(\mathcal{Y}, \R^{3})) \qquad 
    d_H^\infty \in L^{2}(I, \R^{3}),
  \end{equation*}
  with $\int_{\Box}\varphi_H\dd (x_3,y) =0$, solving the corrector problem 
  \begin{equation}\label{eq:corrector_equation}
    \int_{\Box}\mathbb L\left[H+\iota(M_H)+\sym(\nabla'\varphi_H | d_H)\right]:H'\,\dd (x_3,y)=0,
  \end{equation}
  for all $H' \in \HH^\infty_{\rm rel}$. Moreover, there exists a constant $C$ such that
  \begin{equation}\label{eq:corrector_apriori}
    |M_{H}|^{2} + \left\| \nabla' \varphi_{H} \right\|_{L^{2}}^{2} + \|d_{H}\|_{L^{2}}^{2} \leq C \left\| H \right\|_{L^{2}}^{2}.
  \end{equation}
  We call $(M_H,\varphi_H, d_H)$ the corrector associated with $H$ at infinity.
\end{enumerate}
\end{lemma}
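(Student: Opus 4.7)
The plan is to cast the corrector problem as an orthogonal projection on the Hilbert space $\HH^\gamma_{\rm rel}$ equipped with the inner product $(\cdot,\cdot)_{x'}$. Indeed, the corrector equation asserts precisely that $H+\chi$ is $(\cdot,\cdot)_{x'}$-orthogonal to $\HH^\gamma_{\rm rel}$, so setting $\chi$ to be the negative of the orthogonal projection of $H$ onto $\HH^\gamma_{\rm rel}$ within $\HH^\gamma$ yields existence and uniqueness. The space $\HH^\gamma_{\rm rel}$ is closed in $\HH$ by the Remark following its definition (a consequence of periodic Korn and Poincar\'e inequalities), and the bound $\alpha|\sym G|^2 \leq Q(x',x_3,y,G)\leq \beta|\sym G|^2$ from Definition~\ref{def:class} implies that $(\cdot,\cdot)_{x'}$ is equivalent to the usual $L^2$ inner product on $\HH$. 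Hence the projection is continuous and $\|\chi\|_{L^2}\leq C\|H\|_{L^2}$.

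It remains to recover the stated parametrization of $\chi$ by $(M,\varphi,\zeta,g)$, $(M,\varphi)$, or $(M,\varphi,d)$, uniquely up to the normalizations $\int_\Box\varphi=0$ and $\int_{\mathcal Y}\zeta=0$. Existence of some decomposition is built into the definition of $\HH^\gamma_{\rm rel}$; uniqueness is obtained by inspecting the $3\times 3$ block structure. For case (b) with $\gamma\in(0,\infty)$, averaging $\chi$ over $(x_3,y)$ isolates $\iota(M)$ (periodicity kills all partial derivatives), and then $\varphi\in W^{1,2}(I\times\mathcal Y;\R^3)$ is determined by its symmetrized scaled gradient together with $\int_\Box\varphi=0$ via a Korn inequality on the cylinder $I\times\mathcal Y$. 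In case (c) with $\gamma=\infty$, the field $d$ is read off from the third column and row (after subtracting $\iota(M)$), while $\varphi$ is fixed (within its $L^2(I;W^{1,2}(\mathcal Y;\R^3))$ class) by its in-plane symmetrized gradient and the natural normalization, using 2D periodic Korn slice-wise in $x_3$. Case (a) with $\gamma=0$ is the most intricate: one extracts $g$ from the third row/column of $\chi$, then $\iota(M)$ as the $(x_3,y)$-average of the upper-left $2\times 2$ block, then $\nabla'^2\zeta$ as the linear-in-$x_3$ coefficient of that block (exploiting $\int_I x_3\,\dd x_3 = 0$), and finally $\sym\nabla'\varphi$ as the remainder. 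The normalizations $\int_{\mathcal Y}\zeta=0$ and $\int_\Box\varphi=0$, combined with 2D periodic Poincar\'e for $\zeta$ and 2D periodic Korn for $\varphi\in W^{1,2}(\mathcal Y;\R^2)$, pin down $\zeta$ and $\varphi$ individually.

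The quantitative bound $|M|^2 + \|\nabla'\varphi\|_{L^2}^2 + \cdots \leq C\|H\|_{L^2}^2$ then follows by combining $\|\chi\|_{L^2}\leq C\|H\|_{L^2}$ with the Korn and Poincar\'e estimates of the previous step, applied to each mode in the decomposition. The main obstacle I foresee is the $\gamma=0$ case, in which three independent modes ($\varphi$, $\zeta$, $g$) must be cleanly disentangled; the polynomial structure in $x_3$ together with the placement of $g\otimes e_3$ in the third row and column makes this possible, but care is needed to invoke the two-dimensional periodic Korn inequality for $\varphi\in W^{1,2}(\mathcal Y;\R^2)$ so as to translate $L^2$-control of $\chi$ into the claimed componentwise bounds, and to ensure that the extraction order of $g$, $M$, $\nabla'^2\zeta$, $\sym\nabla'\varphi$ does not introduce spurious couplings.
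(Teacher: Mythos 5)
Your proposal is correct and follows essentially the same route as the paper: phrasing the corrector problem as an orthogonal projection onto the closed subspace $\HH^\gamma_{\rm rel}$ in the $(\cdot,\cdot)_{x'}$ inner product is equivalent to the paper's direct-method minimization plus first-variation (Euler--Lagrange) argument, and both rest on the norm equivalence $\alpha|\sym G|^2\le Q\le\beta|\sym G|^2$ and the closedness of $\HH^\gamma_{\rm rel}$ via Korn/Poincar\'e. Your disentangling of the modes ($M$, $\sym\nabla'\varphi$, $x_3\nabla'^2\zeta$, $g$, resp.\ $d$) by averaging and block structure is exactly the $L^2$-orthogonality of these components that the paper records explicitly at the end of its proof, combined with Korn's inequality to pass from $\|\chi\|_{L^2}\le C\|H\|_{L^2}$ to the componentwise a priori bounds.
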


\begin{proposition}[Representation via correctors]\label{P:1}
 Let $Q$ and $B$ be admissible in the sense of Definition~\ref{D:admissible}. Let
 $G_1$, $G_2$, $G_3$ be an orthonormal basis of $\R^{2\times 2}_{\sym}$ and $\gamma \in (0,\infty)$. Fix $x' \in S$ and denote by $(M_{i}^0,\varphi_{i}^0, \zeta_{i}^0, g_{i}^0)$, $(M_{i}^\gamma,\varphi_{i}^\gamma)$ and $(M_{i}^\infty,\varphi_{i}^\infty, d_{i}^\infty)$ the correctors associated with $\iota(x_3 G_i)$ in the sense of Lemma~\ref{L:corrector}.
  \begin{enumerate}
  \item[(a)] \label{P:1:b} (Representation of $Q^\gamma_\textnormal{hom}$).
    The matrices  $\widehat Q^0, \widehat Q^\gamma, \widehat Q^\infty \in\R^{3\times 3}$ defined by
    \begin{align*}
      \widehat Q_{ik}^0 &\colonequals\int_{\Box}\mathbb L\big(\iota(x_3 G_i+M_{i})+\sym(\nabla'\varphi_{i}^0 + x_3\nabla'^2\zeta_{i}^0 | g_{i}^0)\big):\iota(x_3 G_k)\dd (x_3,y), \\
      \widehat Q_{ik}^\gamma &\colonequals\int_{\Box}\mathbb L\big(\iota(x_3 G_i+M_{i}^\gamma)+\sym(\nabla'\varphi_{i}^\gamma | \tfrac{1}{\gamma}\partial_3\varphi_{i}^\gamma)\big):\iota(x_3 G_k)\dd (x_3,y), \\
      \widehat Q_{ik}^\infty &\colonequals\int_{\Box}\mathbb L\big(\iota(x_3 G_i+M_{i}^\infty)+\sym(\nabla'\varphi_{i}^\infty | d_{i}^\infty)\big):\iota(x_3 G_k)\dd (x_3,y),
    \end{align*}
    are symmetric and positive definite, and we have for each $\tilde \gamma \in [0,\infty]$ that
    \begin{equation}\label{P:1:coercivityQhat}
      \frac{\alpha}{12} \mathrm{Id}_{3\times 3}\leq \widehat Q^{\tilde \gamma}\leq \frac{\beta}{12} \mathrm{Id}_{3\times 3},
    \end{equation}
    in the sense of quadratic forms. Moreover, for all $G\in\R^{2\times 2}_{\rm sym}$ we have the representation
    \begin{equation}
    \label{eq:matrixrep}
      Q^{\tilde \gamma}_{\rm eff}(G)=\sum_{i,j=1}^3\widehat Q_{ij}^{\tilde \gamma}\widehat G_i\widehat G_j,
    \end{equation}
    where $\widehat{G}_1$, $\widehat{G}_2$, $\widehat{G}_3$ are the coefficients of $G$
    with respect to the basis $G_1$, $G_2$, $G_3$.

  \item[(b)]  \label{P:1:c} (Representation of $B^\infty_\textnormal{eff}$).
    Define $\widehat B^0, \widehat B^\gamma, \widehat B^\infty \in\R^3$ by
    \begin{align*}
      \widehat B_i^0 &\colonequals \int_{\Box}\mathbb L\big(\iota(x_3 G_i+M_{i}^0)+\sym(\nabla'\varphi_{i}^0 + x_3 \nabla'^2\zeta_{i}^0 | g_{i}^0)\big):B\dd (x_3,y), \\
      \widehat B_i^\gamma &\colonequals \int_{\Box}\mathbb L\big(\iota(x_3 G_i+M_{i}^\gamma)+\sym(\nabla'\varphi_{i}^\gamma | \tfrac{1}{\gamma}\partial_3\varphi_{i}^\gamma)\big):B\dd (x_3,y), \\
      \widehat B_i^\infty &\colonequals \int_{\Box}\mathbb L\big(\iota(x_3 G_i+M_{i}^\infty)+\sym(\nabla'\varphi_{i}^\infty | d_{i}^\infty)\big):B\dd (x_3,y), && i=1,2,3.
    \end{align*}
    Then, for $\tilde \gamma \in [0,\infty]$ we have $B_{\eff}^{\tilde \gamma}=\sum_{i=1}^3\big( (\widehat Q^{\tilde \gamma})^{-1}\widehat B^\gamma\big)_iG_i$.
  \end{enumerate}
\end{proposition}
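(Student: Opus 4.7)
The plan is to identify, in each of the three cases $\gamma=0$, $\gamma\in(0,\infty)$ and $\gamma=\infty$, the corrector produced by Lemma~\ref{L:corrector} at $H=\iota(x_3 G_i)$ with the image $\mathbf E^\gamma(x',G_i)$ of the linear isomorphism from Lemma~\ref{L:E}, and then to reduce parts (a) and (b) to elementary bilinear algebra on the Gram matrix of $\{\mathbf E^\gamma(x',G_i)\}_{i=1}^3$. Concretely, by construction of $\HH^\gamma_{\rm rel}$, the quantity $\chi_i^\gamma:=\iota(M_i^\gamma)+\sym(\nabla'\varphi_i^\gamma|\tfrac{1}{\gamma}\partial_3\varphi_i^\gamma)$ (and the analogous expressions at $\gamma=0$ built from $\varphi_i^0,\zeta_i^0,g_i^0$, or at $\gamma=\infty$ built from $\varphi_i^\infty,d_i^\infty$) lies in $\HH^\gamma_{\rm rel}$, while the corrector equation asserts $(\iota(x_3G_i)+\chi_i^\gamma,H')_{x'}=0$ for every $H'\in\HH^\gamma_{\rm rel}$. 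Hence $\iota(x_3G_i)+\chi_i^\gamma\in\HH^{\gamma,\perp}_{\rm rel}$, and by uniqueness of the orthogonal decomposition this equals $P^{\gamma,\perp}_{\rm rel}(\iota(x_3G_i))=\mathbf E^\gamma(x',G_i)$.

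For part (a), linearity of $\mathbf E^\gamma(x',\cdot)$ combined with $Q^\gamma_\eff(x',G)=\|\mathbf E^\gamma(x',G)\|_{x'}^2$ from Lemma~\ref{L:E} yields
$$Q^\gamma_\eff(x',G)=\sum_{i,k}\widehat G_i\widehat G_k\bigl(\mathbf E^\gamma(x',G_i),\mathbf E^\gamma(x',G_k)\bigr)_{x'}.$$
Using the identification above and the fact that $\mathbf E^\gamma(x',G_i)\in\HH^{\gamma,\perp}_{\rm rel}$, the $\chi_k^\gamma$-contribution vanishes in the inner product, reducing it to $\int_\Box\mathbb L\,\mathbf E^\gamma(x',G_i):\iota(x_3G_k)\,\dd(x_3,y)$, which is exactly the entry $\widehat Q^\gamma_{ik}$ defined in the statement. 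Symmetry is then immediate from the Gram-matrix structure (equivalently, from the major symmetry of $\mathbb L$ encoded in the polarization identity), positive definiteness is the injectivity half of Lemma~\ref{L:E}, and the coercivity bound \eqref{P:1:coercivityQhat} follows by applying \eqref{eq:ineq2'} to $G=\sum_k\widehat G_kG_k$ together with $|G|^2=\sum_k|\widehat G_k|^2$, which holds since $\{G_k\}$ is orthonormal.

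For part (b), Definition~\ref{D:eff} gives $\mathbf E^\gamma(x',B^\gamma_\eff(x'))=P^{\gamma,\perp}_{\rm rel}(\sym B(x',\cdot))$. Writing $B^\gamma_\eff(x')=\sum_ic_iG_i$ and pairing against $\mathbf E^\gamma(x',G_j)$ in the $(\cdot,\cdot)_{x'}$-inner product, the left-hand side becomes $\sum_ic_i\widehat Q^\gamma_{ij}$; on the right, $\mathbf E^\gamma(x',G_j)\in\HH^{\gamma,\perp}_{\rm rel}$ lets me drop the projection, leaving $\int_\Box\mathbb L\,\mathbf E^\gamma(x',G_j):B\,\dd(x_3,y)=\widehat B^\gamma_j$ (the symmetric part of $B$ is automatically selected because $\mathbb L$ vanishes on skew arguments, though here $B$ is already symmetric by assumption). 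Invertibility of $\widehat Q^\gamma$, established in (a), then gives $c=(\widehat Q^\gamma)^{-1}\widehat B^\gamma$, as claimed. I do not anticipate any substantive obstacle: the entire argument is case-independent, up to bookkeeping of which specific pieces appear in $\chi_i^\gamma$ in each of the three regimes, and this bookkeeping is already transparent from the statement of Lemma~\ref{L:corrector}.
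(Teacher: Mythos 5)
Your proposal is correct and follows essentially the same route as the paper's proof in Appendix A: the key step in both is the identification $\mathbf E^{\tilde\gamma}(x',G_i)=P^{\tilde\gamma,\perp}_{\rm rel}(\iota(x_3G_i))=\iota(x_3G_i)+\chi_i^{\tilde\gamma}$ via the corrector equation, after which $\widehat Q^{\tilde\gamma}$ is read off as the Gram matrix (with coercivity from \eqref{eq:ineq2'}) and part (b) reduces, via self-adjointness of the projection and invertibility of $\widehat Q^{\tilde\gamma}$, to the linear system $\sum_i c_i\widehat Q^{\tilde\gamma}_{ij}=\widehat B^{\tilde\gamma}_j$. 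The only cosmetic difference is that the paper verifies the candidate $\widetilde B=\sum_i((\widehat Q^{\tilde\gamma})^{-1}\widehat B^{\tilde\gamma})_iG_i$ by testing $\mathbf E^{\tilde\gamma}(x',\widetilde B)$ against the spanning set and invoking injectivity of $\mathbf E^{\tilde\gamma}$, whereas you solve the same linear system directly for the coefficients of $B^{\tilde\gamma}_{\eff}$.
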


\section{Proof of Theorem \ref{T:gammainfty}}
\label{sect:qual}

In this section we prove Theorem \ref{T:gammainfty}. \cite{neukamm2013derivation} deals with a similar problem, in the context of the von Kármán theory for non-prestrained plates. Although Theorem \ref{T:gammainfty} does not follow from any result in \cite{neukamm2013derivation}, the proof is an adaptation of their framework.  

\begin{proof}[Proof of Theorem \ref{T:gammainfty}]
We only present the argument for the extremal cases $\bar\gamma \in \{0, \infty\}$, since for $\bar\gamma \in (0, \infty)$ the statement is similar and easier. Throughout the proof $x'\in S$ is arbitrary but fixed from now on. Our argument relies on the corrector representation of Proposition~\ref{P:1} and the observation that the correctors converge as $\gamma$ converges to $0$ or $\infty$. The latter is shown via $\Gamma$-convergence in the first two steps.

\step 1 We start by defining the vector spaces
\begin{align*}
    &\mathbf{S}_{\gamma} \colonequals \left\{(M, \varphi) \in \R^{2\times 2}_{\sym} \times W^{1,2}(\Box ;\R^3) : \int_{\Box}\varphi \dd (x_3,y) =0 \right\},\\
    &\mathbf{S}_{\infty} \colonequals \left\{  (M,d, \varphi) \in \R^{2\times 2}_{\sym}\times L^{2}(I, \R^{3})  \times L^{2}(I, W^{1,2}(\mathcal{Y}, \R^{3})) : \int_{\Box}\varphi \dd (x_3,y) =0 \right\},\\
    &\mathbf{S}_{0} \colonequals \left\{(M,g, \varphi, \zeta) \in \R^{2\times 2}_{\sym} \times  L^{2}(\Box, \R^{3}) \times W^{1,2}(\mathcal{Y}, \R^{2}) \times W^{2,2}(\mathcal{Y}) : \int_{\Box}\varphi \dd (x_3,y) =0 , \int_{\mathcal Y}\zeta \dd y =0\right\}.
\end{align*}
Next, for an arbitrary but fixed $H \in \HH$, we define the energy functionals 
\begin{align*}
	&\mathcal{C}_{\gamma}: \mathbf{S}_{\gamma} \to \R,\\
	&\qquad\mathcal{C}_{\gamma} (M, \varphi) \colonequals \int_{\Box }Q\big(x', x_3,y,H+\iota(M) + \sym\nabla_\gamma\varphi\big)\dd (x_3,y),\\
	&\mathcal{C}_{\infty}: \mathbf{S}_{\infty} \to \R,\\
	&\qquad\mathcal{C}_{\infty} (M,d, \varphi) \colonequals \int_{\Box }Q\big(x', x_3,y,H+\iota(M)+\sym(\nabla' \varphi |d)\big)\dd (x_3,y),\\
	&\mathcal{C}_{0}: \mathbf{S}_{0} \to \R,\\
	&\qquad\mathcal{C}_{0} (M,g, \varphi, \zeta) \colonequals \int_{\Box }Q\big(x', x_3,y,H+\iota(M)+\sym(\iota(\nabla' \varphi + x_{3} \nabla^{2}\zeta)+g\otimes e_3)\big)\dd (x_3,y).
\end{align*}
For simplicity we drop the dependence of $Q$ on $x',x_3,y$ in the following.
We claim that, as $\gamma \to \bar\gamma\in\{0,\infty\}$, $\mathcal{C}_{\gamma}$ $\Gamma-$converges to $\mathcal{C}_{\bar\gamma} $ in the topology of weak $L^2$ convergence of $\sym\nabla_\gamma\varphi$, and convergence of $M$. \\ 

Note that the liminf inequality is immediate: if $\bar\gamma = \infty$, $M_{\gamma} \to M_{\infty}$, and $\sym\nabla_\gamma\varphi \rightharpoonup \sym(\nabla' \varphi_{\infty} |d_{\infty})$ then by positive semi-definiteness of $Q$,
\begin{equation}
\begin{split}
& \int_{\Box }Q\big(x', x_3,y,H+\iota(M_{\infty})+\sym(\nabla' \varphi_{\infty} |d_{\infty})\big)\dd (x_3,y)\\
\leq & \liminf_{\gamma \to \infty} \int_{\Box }Q\big(x', x_3,y,H+\iota(M_{\gamma})+\sym\nabla_\gamma \varphi_{\gamma}\big)\dd (x_3,y).
\end{split} 
\end{equation}
The proof of the liminf inequality in the case $\bar\gamma = 0$ is analogous and left to the reader. 

To prove the limsup inequality, we consider the cases $\bar\gamma=\infty$ and $\bar\gamma = 0$ separately.

\medskip\noindent\textit{Substep 1.1 } Limsup inequality in the case $\bar\gamma=\infty$. 

Let $(M_{\infty}, \varphi_{\infty}, d_{\infty}) \in \mathbf{S}_{\infty} $. Using a standard density argument, we may assume that $d_{\infty} \in C^{\infty} (I, \R^{3})$. Let 
\begin{equation}
D_{\infty}(x_{3}) \colonequals \int_{0}^{x_{3}} d_{\infty}(s) \dd s.
\end{equation}   

Consider the function 
\begin{equation}
\widetilde{\varphi}_{\gamma} \colonequals \varphi_{\infty} + \gamma D_{\infty},
\end{equation}
and the test function
\begin{equation}
    {\varphi}_{\gamma} \colonequals \widetilde{\varphi}_{\gamma} - \int_{\Box} \widetilde{\varphi}_{\gamma} \dd (x_3,y).  
\end{equation}
Then $(M_{\infty}, {\varphi}_{\gamma}) \in \mathbf{S}_{\gamma} $ and we can compute
\begin{equation}
\nabla_{\gamma} \varphi_{\gamma} = \left( \nabla' \varphi_{\infty} \bigg| \frac{1}{\gamma} \partial_{3} \varphi_{\infty} + d_{\infty} \right).
\end{equation}
This implies that
\begin{equation}
\begin{split}
\lim_{\gamma \to \infty} \mathcal{C}_{\gamma} (M_{\infty}, \varphi_{\gamma}) &= \lim_{\gamma \to \infty}  \int_{\Box }Q\left(x', x_3,y,H+\iota(M_{\infty})+\left( \nabla' \varphi_{\infty} \bigg| \frac{1}{\gamma} \partial_{3} \varphi_{\infty} + d_{\infty} \right)\right)\dd (x_3,y)\\
&= \int_{\Box }Q\left(x', x_3,y,H+\iota(M_{\infty})+\left( \nabla' \varphi_{\infty} \big| d_{\infty} \right)\right)\dd (x_3,y)\\
&= \mathcal{C}_{\infty} (M_{\infty},d_{\infty}, \varphi_{\infty}).
\end{split}
\end{equation}
On the other hand it is clear that $\nabla_{\gamma} \varphi_{\gamma} \rightharpoonup (\nabla' \varphi_{\infty}|  d_{\infty})$. This concludes substep 1.1. 

\medskip\noindent\textit{Substep 1.2 } Limsup inequality in the case $\bar\gamma=0$. 

Let $ (M_{0},g_{0}, \varphi_{0}, \zeta_{0}) \in \mathbf{S}_{0}$. Using a standard density argument, we may assume that $g_{0} \in C^{\infty} (\Box, \R^{3})$.

 Let 
\begin{equation}
G_{0}(x_{3},y) \colonequals \int_{0}^{x_{3}} g_{0}(s,y) \dd s.
\end{equation}   

Consider the function 
\begin{equation}
\widetilde{\varphi}_{\gamma} \colonequals (\varphi_{0},0) + x_{3} \nabla' \zeta_{0} - \frac{1}{\gamma} (0,0,\zeta_{0}) + \gamma G_{0},
\end{equation}
and the test function
\begin{equation}
    {\varphi}_{\gamma} \colonequals \widetilde{\varphi}_{\gamma} - \int_{\Box} \widetilde{\varphi}_{\gamma} \dd (x_3,y).  
\end{equation}
Then $(M_{0}, {\varphi}_{\gamma}) \in \mathbf{S}_{\gamma} $ and we can compute
\begin{equation}
\nabla_{\gamma} \varphi_{\gamma} =  \begin{pmatrix}
    \nabla' \varphi_{0} + x_{3} \nabla'^{2} \zeta_{0} + \gamma\nabla' G_{0} & \begin{pmatrix}
        g_{0}^{(1)} \\
        g_{0}^{(2)}
    \end{pmatrix}  \\
    0 & g_{0}^{(3)} \end{pmatrix} +
    \begin{pmatrix}
    0 & \frac{1}{\gamma} \nabla' \zeta_{0}  \\
    -\frac{1}{\gamma} \nabla' \zeta_{0}^{T} & 0 \end{pmatrix}.
\end{equation}
This implies that
\begin{equation}
\begin{split}
&\lim_{\gamma \to 0} \mathcal{C}_{\gamma} (M_{0}, \varphi_{\gamma}) \\
= & \lim_{\gamma \to 0}  \int_{\Box }Q\left(x', x_3,y,H+\iota(M_{0})+\sym\begin{pmatrix}
    \nabla' \varphi_{0} + x_{3} \nabla'^{2} \zeta_{0} + \gamma\nabla' G_{0} &  \begin{pmatrix}
        g_{0}^{(1)} \\
        g_{0}^{(2)}
    \end{pmatrix} \\
    0 & g_{0}^{(3)} \end{pmatrix} \right)\dd (x_3,y)\\
= & \int_{\Box }Q\left(x', x_3,y,H+\iota(M_{0})+\sym(\nabla' \varphi_{0} + x_{3} \nabla'^{2}\zeta_{0} |g_{0}) \right)\dd (x_3,y)\\
= & \mathcal{C}_{0} (M_{0},g_{0}, \varphi_{0}, \zeta_{0}).
\end{split}
\end{equation}
On the other hand it is clear that $\sym \nabla_{\gamma} \varphi_{\gamma} \rightharpoonup \sym(\nabla' \varphi_{0} + x_{3} \nabla'^{2}\zeta_{0} |g_{0})$. This concludes substep 1.2. 

\step 2 Given $x' \in S$ and $H \in \HH$, let $(M_{\gamma}, \varphi_{\gamma})$ denote the minimizer of $\mathcal{C}_{\gamma}$. We call $(M_{\gamma}, \varphi_{\gamma})$ the \emph{corrector} associated to $H$ at $\gamma$. Similarly, let $(M_{\infty}, \varphi_{\infty}, d_{\infty})$ denote the minimizer of $\mathcal{C}_{\infty}$, and $(M_{0},g_{0}, \varphi_{0}, \zeta_{0})$ denote the minimizer of $\mathcal{C}_{0}$. 

We claim that as $\gamma \to \infty$, $\sym \left( \nabla_{\gamma} \varphi_{\gamma} \right) + \iota(M_{\gamma}) \to \sym \left( \nabla'\varphi_{\infty} | d_{\infty} \right) + \iota(M_{\infty})$ and as $\gamma \to 0$, $\sym \left( \nabla_{\gamma} \varphi_{\gamma} \right) + \iota(M_{\gamma}) \to \sym(\nabla' \varphi_{0} |g_{0}) + \iota(x_{3} \nabla'^{2}\zeta_{0}) + \iota(M_{0})+ $ strongly in $L^{2}$. \\

To prove this claim, notice that the norm of the corrector is uniformly bounded for $\gamma \in (0, \infty)$:
\begin{equation}
\begin{split}
&\sup_{\gamma \in (0, \infty)} \left\| \sym \nabla_{\gamma} \varphi_{\gamma} \right\|_{L^{2}}^{2} + \left | M_{\gamma} \right|^{2} \\
= & \sup_{\gamma \in (0, \infty)} \int_{\Box } \left | \iota(M_{\gamma})+ \nabla_{\gamma} \varphi_{\gamma} \right|^{2} \dd (x_3,y) \\
\leq & \sup_{\gamma \in (0, \infty)} 2\int_{\Box } \left | H+\iota(M_{\gamma})+ \nabla_{\gamma} \varphi_{\gamma} \right|^{2} \dd (x_3,y) + 2\left\| H\right\|_{L^2}^2 \\
\leq &  \sup_{\gamma \in (0, \infty)} c \int_{\Box } Q\left(x', x_3,y,H+\iota(M_{\gamma})+ \nabla_{\gamma} \varphi_{\gamma} \right)\dd (x_3,y) + 2\left\| H\right\|_{L^2}^2 \\
\leq & c \int_{\Box } Q\left(x', x_3,y,H\right)\dd (x_3,y) + 2\left\| H\right\|_{L^2}^2 \\
\leq & C \|H\|_{L^{2}}^{2},
\end{split}
\end{equation} 
for some constants $c,C$. 

If $\gamma \to \infty$, then by \cite[Lemma 5.1]{neukamm2013derivation}, we have that  modulo a subsequence, $M_{\gamma} \to M $, and $\nabla_{\gamma} \varphi_{\gamma} \rightharpoonup \left( \nabla'\varphi| d \right)$ weakly in $L^{2}$, for some $(M,\varphi, d) \in \mathbf{S}_{\infty}$. 

By step 1 and by uniqueness of the minimizer (Lemma \ref{L:corrector}) of $\mathcal{C}_{\infty}$, $(M,\varphi, d) = (M_{\infty},\varphi_{\infty}, d_{\infty})$. Therefore $\sym \left( \nabla_{\gamma} \varphi_{\gamma} \right) + \iota(M_{\gamma}) \rightharpoonup \sym \left( \nabla'\varphi_{\infty} | d_{\infty} \right) + \iota(M_{\infty})$ weakly in $L^{2}$.

Furthermore, by $\Gamma-$convergence,
\begin{equation}
\begin{split}
&\int_{\Box }Q\left(x', x_3,y,H+\iota(M_{\infty})+\left( \nabla' \varphi_{\infty} \big| d_{\infty} \right)\right)\dd (x_3,y)\\
= &\lim_{\gamma \to \infty} \int_{\Box }Q\left(x', x_3,y,H+\iota(M_{\gamma})+ \nabla_{\gamma} \varphi_{\gamma} \right)\dd (x_3,y).
\end{split}
\end{equation}
Since $Q$ is positive definite on symmetric matrices, we conclude that $\sym \left( \nabla_{\gamma} \varphi_{\gamma} \right) + \iota(M_{\gamma}) \to \sym \left( \nabla'\varphi_{\infty} | d_{\infty} \right) + \iota(M_{\infty})$ strongly in $L^{2}$. 

Similarly, if $\gamma \to 0$, then by \cite[Lemma 5.1]{neukamm2013derivation}, we have that  modulo a subsequence, $M_{\gamma} \to M $, and $\nabla_{\gamma} \varphi_{\gamma} \rightharpoonup (\nabla' \varphi + x_{3} \nabla'^{2}\zeta |g)\big)$ weakly in $L^{2}$, for some $(M,g, \varphi, \zeta) \in \mathbf{S}_{0}$. The conclusion in the case $\gamma \to 0$ then follows by an analogous reasoning. 

\step 3 Conclusion. The convergence of the correctors shown in step 2 along with Proposition \ref{P:1} implies that $B^{\gamma}_{\eff} \to B^{\bar \gamma}_{\eff}$ and $Q^{\gamma}_{\eff}(G) \to Q^{\bar \gamma}_{\eff}(G)$. By Definitions \ref{def:Qhom} and \ref{D:eff}, 
\begin{equation}
    Q^{\gamma}_{\rm ext} \left( x',  B \left( x', x_{3}, y  \right) \right) = Q^{\gamma}_{\eff} \left( x', B_{\eff}^{\gamma} \right)\dd x' + \mathcal I_{\rm res}^\gamma.
\end{equation}
Furthermore, by steps $1$ and $2$, $\lim_{\gamma \to \bar \gamma} Q^{\gamma}_{\rm ext} \left( x',  B \left( x', x_{3}, y  \right) \right) = Q^{\bar \gamma}_{\rm ext} \left( x',  B \left( x', x_{3}, y  \right) \right)$, which in particular implies that $\mathcal I_{\rm res}^{\gamma} \to \mathcal I_{\rm res}^{\bar \gamma}$.
  
\end{proof}

\section{Proof of Theorem \ref{T1}}
\label{sect:proofT1}

In this Section, we will prove Theorem \ref{T1}. Before giving the proof, we state the exact hypotheses on the stored energy functional $W$. 
\begin{definition}[Stored energy function] \label{Def:StoredEnergyFct}
Let $\alpha > 0$ and  $W:S \times I \times \mathcal{Y} \times \R^{3\times 3} \to [0,\infty]$ be a Carathéodory function. We call $W$ a stored energy functional if $W$ is piecewise continuous in the second variable, continuous in the fourth variable and satisfies for a.e.~$x_3 \in \R$, $x' \in S$, and $y \in \mathcal{Y}$ the following statements:
\begin{enumerate}
\renewcommand{\labelenumi}{\theenumi}
\renewcommand{\theenumi}{(W\arabic{enumi})}
	\item \label{Ass:FrameIndifference}(Frame indifference). $W(x', x_3, y,RF) = W(x', x_3,y,F)$ for all $F \in \R^{3 \times 3}$ and $R \in \SO{3}$.
	\item \label{Ass:NaturalState}(Natural state). $0 = W(x', x_3, y, \mathrm{Id}) \leq W(x', x_3, y, F)$ for all $F \in \R^{3 \times 3}$.
	\item \label{Ass:NonDegeneracy}(Non-degeneracy). $W(x', x_3, y, F) \geq \tfrac{1}{\alpha} \dist^2(F, \SO{3})$ for all $F \in \R^{3 \times 3}$.
	\item \label{Ass:Expansion}(Quadratic expansion). There exists a Carathéodory function $Q: S \times I \times \mathcal{Y} \times \R^{3 \times 3} \to [0,\infty]$ which is piecewise continuous in the second variable, and a non-negative quadratic form in the fourth variable and satisfies the uniform bound
	\begin{equation}
		Q(x', x_3,y, G) \leq \alpha |G|^2.
	\end{equation}
	Moreover, there exists a Carathéodory function $r: S \times \R \times \R^3 \times [0,\infty) \to [0,\infty]$, which is piecewise continuous in the second variable, continuous and increasing in the fourth variable and satisfies for each $x' \in S, x_{3} \in I$ that
    \begin{equation}
        \lim_{t \to 0} r(x', x_{3}, y,t) = 0
    \end{equation}
    and
    \begin{equation}
        \limsup_{t \to 0} \operatorname*{ess\,sup}_{ y \in \mathcal{Y}} r(x', x_{3}, y,t) < \infty,
    \end{equation}
    such that
	\begin{equation}\label{Eq:TaylorExpansion}
		\left|W(x', x_3, y, \mathrm{Id}+G) - Q(x', x_3,y,G)\right| \leq |G|^2 r(x', x_3, y, |G|) \quad \text{ for a.e.~} y \in \mathcal{Y}.
	\end{equation}
	\item (growth conditions). For all $F,G \in \R^{3\times 3}$, we have
	\begin{equation}\label{Eq:growth_cond}
		\begin{cases}
			\tfrac{1}{\alpha} |F|^2 - \alpha \leq W(x', x_3,y,F), \\ 
			|W(x', x_3,y,F) - W(x', x_3,y,G)| \leq \alpha(1 + |F| + |G|) |F - G|.
		\end{cases}
	\end{equation}
	Furthermore, there exists a neighborhood $O$ of $ \SO{3}$ such that for all $F \in O$,
	\begin{equation}
	 W(x', x_3,y,F) \leq \alpha (1 + |F|^2)
	\end{equation}
\end{enumerate}
\end{definition}

\begin{remark}\label{Rem:Properties_W_and_Q}
It is well-known in elasticity, that \hyperref[Def:StoredEnergyFct]{(W1) - (W4)} imply
\begin{equation}\label{Eq:UniformBounds_Q}
		\tfrac{1}{\alpha} |\sym G|^2 \leq Q(x', x_3, y,G) \leq \alpha|\sym G|^2,
\end{equation}
for all $G \in \R^{3\times 3}$ and a.e.~$x_3 \in I$, $x' \in S$, and $y \in \R^3$. In particular, $Q$ only depends on the symmetric part of $G$.
\end{remark}

Next, we recall the main theorem of \cite{neukamm2025linearization}, which we will need in the proof of Theorem \ref{T1}. 
\begin{theorem}[{\cite[Theorem 3.2]{neukamm2025linearization}}]
\label{Thm:Expansion_Whom}
Let $W$ satisfy see Definition \ref{Def:StoredEnergyFct}, and $W^h_\ho$ be given by equation \eqref{eq:formW}. Then the following statements hold.
\begin{enumerate}
	\item[(a)] (Frame indifference). For all $F \in \R^{3 \times 3}$, $h > 0$, $R \in \SO{3}$, and a.e.~$x_3 \in I$, $x' \in S$,  we have
	\begin{equation} \label{Eq:FrameIndifference_Whom}
		W^h_\ho(x', x_3, RF) = W^h_\ho(x', x_3, F).
	\end{equation}
	\item[(b)] (Non-degeneracy). There exists some $\alpha' > 0$ and $h_0 > 0$ such that for all $F \in \R^{3 \times 3}$,  $0 < h \leq h_0$, and a.e.~$x_3 \in I$, $x' \in S$, 
	\begin{equation} \label{Eq:NonDegeneracy_Whom}
		W^h_\ho(x', x_3, F) \geq \tfrac{1}{\alpha'} \dist^2(F, \SO{3}) - \alpha'h^2.
	\end{equation}
	\item[(c)] (Quadratic expansion). There exists a Carathéodory function $\rho: S \times \R \times [0,\infty) \to [0,\infty]$ which is continuous and increasing in the third variable, such that for a.e.~$x_3 \in \R, x' \in S$ we have $\rho(x', x_3,0) = 0$ and for all $h > 0$ and $G \in \R^{3 \times 3}$
	\begin{equation}\label{Eq:TaylorExpansion_Whom}
	\begin{split}
		&\left|\frac{1}{h^2} W^h_\ho(x', x_3,\mathrm{Id} + hG) - \left(Q_{\ho}(x', x_3,G - B_\ho(x', x_3)) + R(B)(x', x_3)\right)\right|\\
		 \leq & (1 + |G|^2) \rho(x', x_3, h + |hG|).
	\end{split}	
	\end{equation}
\end{enumerate}
\end{theorem}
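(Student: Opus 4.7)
My plan is to prove (a), (b), (c) in order, with (c) carrying the analytic heart. Part (a) follows from (W1) via a change of variables in the corrector: given $\varphi \in W^{1,2}_\per(k\mathcal{Y}, \R^3)$ and $R \in \SO{3}$, set $\widetilde\varphi := R^T\varphi$. Since the third column of $\iota(\nabla'\varphi)$ vanishes, $R^T\iota(\nabla'\varphi) = \iota(\nabla'\widetilde\varphi)$, and so
\[
(RF + \iota(\nabla'\varphi))(\mathrm{Id}+hB)^{-1} = R(F + \iota(\nabla'\widetilde\varphi))(\mathrm{Id}+hB)^{-1}.
\]
By (W1) the integrand is unchanged; taking the infimum over $\varphi$ proves \eqref{Eq:FrameIndifference_Whom}.

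For (b), I would combine (W3) with a quantitative rigidity argument on periodic cells. Using $(\mathrm{Id}+hB)^{-1} = \mathrm{Id} - hB + O(h^2)$, one obtains
\[
\dist^2\bigl((F+\iota(\nabla'\varphi))(\mathrm{Id}+hB)^{-1},\SO{3}\bigr) \geq \tfrac{1}{2}\dist^2(F+\iota(\nabla'\varphi),\SO{3}) - C(1+|F|^2)h^2.
\]
For $F$ near $\SO{3}$, the Friesecke--James--M\"uller rigidity estimate on $k\mathcal{Y}$ combined with Korn's inequality for the periodic corrector yields $\fint_{k\mathcal{Y}} \dist^2(F+\iota(\nabla'\varphi),\SO{3})\,dy \geq c\,\dist^2(F,\SO{3})$, while for $F$ far from $\SO{3}$ the growth bound in (W5) suffices to control $|F|^2$. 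Together these give \eqref{Eq:NonDegeneracy_Whom}.

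Part (c) is the main content. The natural candidate for the limit is read off from
\[
Q_\ho(x',x_3,G-B_\ho) + R(B) = \inf_{k\in\N}\inf_{\psi}\fint_{k\mathcal{Y}}Q\bigl(x',x_3,y,G-B(x',x_3,y)+\iota(\nabla'\psi)\bigr)\,dy,
\]
with the decomposition of the left-hand side obtained by completing the square in $G$ using the quadratic corrector equation (the $G=0$ slice determines $R(B)$, while the linear-in-$G$ part determines $B_\ho$). The upper bound in \eqref{Eq:TaylorExpansion_Whom} comes from inserting a near-minimizing $\psi$ as a test corrector $\varphi = h\psi$ and expanding $(\mathrm{Id}+hG+h\iota(\nabla'\psi))(\mathrm{Id}+hB)^{-1} = \mathrm{Id}+h(G-B+\iota(\nabla'\psi))+O(h^2)$, then applying (W4) to control the deviation from $Q$ by the remainder $r$. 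For the matching lower bound, take a near-minimizer $\varphi_h$: part (b) together with rigidity forces $\nabla'\varphi_h/h$ to be bounded in $L^2(k\mathcal{Y})$; splitting the cell into the good set $\{|hG+\iota(\nabla'\varphi_h)|\le h^{1/2}\}$ where (W4) applies, and its complement (of measure $o(1)$ by Chebyshev, with contribution controlled by the Lipschitz growth in (W5)), weak lower semi-continuity of $Q$ matches the upper bound. The main obstacle is producing the quantitative error $(1+|G|^2)\rho(x',x_3,h+|hG|)$ rather than just qualitative $\Gamma$-convergence, and handling the multi-cell infimum over $k\in\N$ uniformly. This requires the truncation argument of \cite{muller2011commutability}, extended to the prestrained setting in \cite{neukamm2025linearization}, together with a uniform-in-$k$ rigidity constant that follows from the periodic structure of the corrector space.
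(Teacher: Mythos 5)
A preliminary but important point: this paper does not prove Theorem \ref{Thm:Expansion_Whom} at all --- it is quoted from \cite[Theorem 3.2]{neukamm2025linearization} and used as a black box in the proof of Theorem \ref{T1}. So there is no in-paper proof to compare you against; what can be judged is whether your sketch stands on its own. Parts (a) and (b) are essentially sound. The substitution $\widetilde\varphi=R^T\varphi$ in (a) is exactly the standard argument (and your observation that $R^T\iota(\nabla'\varphi)=\iota(\nabla'\widetilde\varphi)$ is what makes it work). In (b) your intermediate inequality is stated slightly wrongly: the error from replacing $(\mathrm{Id}+hB)^{-1}$ by $\mathrm{Id}$ is governed by $h^2|F+\iota(\nabla'\varphi)|^2$, not by $h^2(1+|F|^2)$, since the corrector is not controlled by $F$ pointwise; the correct and in fact stronger statement is $\dist^2\big(A(\mathrm{Id}+hB)^{-1},\SO{3}\big)\ge c\,\dist^2(A,\SO{3})-Ch^2$ for $A=F+\iota(\nabla'\varphi)$, $B\in L^\infty$ and $h\le h_0$, which then feeds into the rigidity/periodicity estimate $\fint_{k\mathcal Y}\dist^2(F+\iota(\nabla'\varphi),\SO{3})\,\dd y\ge c\,\dist^2(F,\SO{3})$ as you intend. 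Note this silently requires $B$ bounded, which is part of the hypotheses in \cite{neukamm2025linearization} though not visible in Definition \ref{D:admissible}.

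The genuine gap is part (c), and you flag it yourself. Your identification of the limit is correct: completing the square with the orthogonal decomposition into $\mathbf S$, $\mathbf O$ and $(\mathbf S+\mathbf C)^\perp$ does give that the relaxed single-cell energy of $G-\sym B$ equals $Q_\ho(x',x_3,G-B_\ho)+R(B)$, and the upper bound via the ansatz $\varphi=h\psi$ with a near-optimal $\psi$ and (W4) is the easy half. But the content of \eqref{Eq:TaylorExpansion_Whom} is the matching lower bound with the quantitative error $(1+|G|^2)\rho(x',x_3,h+|hG|)$, with $\rho$ independent of $G$ and uniform over the multi-cell infimum in $k$; at precisely this point you write that it ``requires the truncation argument of \cite{muller2011commutability}, extended to the prestrained setting in \cite{neukamm2025linearization}'' --- i.e.\ you invoke the very result being proved. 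The construction of a single modulus $\rho$, the uniform-in-$k$ rigidity/Korn constants for in-plane periodic correctors, and the survival of the affine-in-$B$ structure ($B_\ho$, $R(B)$) through the truncation are not carried out. So as a blind proof the proposal is an accurate road map rather than a proof; as a comparison with the paper it is on equal footing, since the paper likewise delegates the statement to \cite{neukamm2025linearization}.
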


Here, $Q_{\ho}$ is defined as
\begin{equation}
	Q_{\ho}(x', x_3, G) := \inf_{\varphi} \int_Y Q(x', x_3, y, G + \iota (\sym \nabla' \varphi) ) \,\dd y,
\end{equation}
where the infimum is taken over $\varphi \in L^{2}(I, W^{1,2}(\mathcal{Y}, \R^{3}))$; and $B_\ho$ and $R(B)$ are defined as follows: Consider the scalar product
\begin{equation}
	\left( \psi, \chi \right)_{x} := \int_Y  \mathbb L(x', x_3,y) \psi(y) : \chi(y) \,\dd y,
\end{equation}
on $L^2(\mathcal Y,\R^{3\times 3}_{\sym})$ and the closed subspaces 
\begin{align*}
	\mathbf{S} := \{\iota(\sym(\nabla' {\varphi}))\mid \varphi \in L^{2}(I, W^{1,2}(\mathcal{Y}, \R^{3})) \}, &&
	\mathbf{C} := \R^{3\times 3}_{\sym}, &&
	\mathbf{O} := \mathbf{S}^{\perp_{\mathbf{S} + \mathbf{C}}} = \mathbf{S}^\perp \cap (\mathbf{S} + \mathbf{C}).
\end{align*}
Define $P_{x}:\R^{3\times 3}_{\sym} \to \mathbf{O}$ as the orthogonal projection onto $\mathbf{O}$ in the $(\cdot, \cdot)_{x}$ inner product, restricted to $\R^{3\times 3}$. It turns out that $P_{x}$ is a linear isomorphism, see \cite{neukamm2025linearization}. $B_\ho$ and $R(B)$ are defined as:
\begin{equation}
\label{eq:BhomRB}
\begin{split}
	B_{\hom}(x', x_3) &:= P_{x}^{-1}(\operatorname{P}_\mathbf{O}(\sym B(x', x_{3}, \cdot))), \\
	R(B)(x', x_3) &:= \|\operatorname{P}_{(\mathbf{S} + \mathbf{C})^\perp}(\sym B(x', x_{3}, \cdot))\|_{x}^2,
\end{split}
\end{equation}
where $\operatorname{P}_{(\mathbf{S} + \mathbf{C})^\perp}$ and $\operatorname{P}_\mathbf{O}$ denote, respectively, the orthogonal projection onto $(\mathbf{S} + \mathbf{C})^\perp$ and onto $\mathbf{O}$ in the $(\cdot, \cdot)_{x}$ inner product. 

\begin{remark}
    It would be reasonable to work with a slightly stronger version of assumption ($W4$), namely that the error in the quadratic expansion is uniform in $x'$ and $x_{3}$. More precisely, it would be reasonable to assume that $r$ satisfies that $\limsup_{t \to 0} \operatorname*{ess\,sup}_{x' \in S, x_{3} \in I, y \in \mathcal{Y}} r(x', x_{3}, y,t) < \infty$. Under this stronger hypothesis, it would be possible to show  an analogue of item (c) of Theorem \ref{Thm:Expansion_Whom} in which $\rho$ is independent of $x'$ and $x_{3}$. Our proof would simplify in that case. Our methods, however, allow us to treat a $\rho$ which depends on $x'$ and $x_{3}$. 
\end{remark}

\begin{remark}
    The results in \cite{neukamm2025linearization} are actually more general than the setting of Theorem \ref{Thm:Expansion_Whom}. Among other things, \cite{neukamm2025linearization} can treat, in addition a dependence on $\frac{x_{3}}{\epsilon}$. The reason for not including this additional dependence in the hypotheses of Theorem \ref{Thm:Expansion_Whom} is that we want to be able to compare the result (after letting $h$ tend to $0$) with that of \cite{bohnlein2023homogenized,neukamm2013derivation}, which do not deal with a dependence on $\frac{x_{3}}{\epsilon}$.
\end{remark}

We proceed to give the proof of Theorem \ref{T1}. After proving item (a), we will prove a different version of items (b) and (c). That is, we will prove the analogues of items (b) and (c) for a different energy, and then show that the two energies are equivalent at the end of this Section. The energy we will work with in this section is defined as follows:
\begin{definition}
We define the functional $ \widetilde{\mathcal{E}}^{\infty}_{\hom}$, as
\begin{equation}
  \label{eq:Etilde}
 \widetilde{\mathcal{E}}^{\infty}_{\hom}(\deform)\colonequals
  \begin{cases}
    \displaystyle
    \min_{M,d}\int_{\Omega} Q_{\ho}(x',x_3,\iota(x_3\II_{\deform}+M)+d \otimes e_{3} -B_\ho(x) ) + \mathrlap{R(B)(x)  \dd x} \\
    &  \text{if }\deform\in W^{2,2}_{\iso}(S;\R^3),\\
    +\infty & \text{otherwise.}
  \end{cases}
\end{equation}
  where $e_{3}\colonequals (0,0,1) \in \R^3$ and the minimization is over all pairs $(M,d)$ with $M\in L^2(S;\R^{2\times 2}_{\sym})$ and $d\in L^2(\Omega;\R^{3})$, $\II_{\deform}$ denotes the second fundamental form associated to $\deform$, and $B_\ho, R(B)$ are given by equation \eqref{eq:BhomRB}. 
\end{definition}

\begin{remark}
 We note that we may rewrite the 2d homogenized energy $\widetilde{\mathcal{E}}^{\infty}_{\hom}$ as 
\begin{equation}
\label{eq:Etildeequiv}
  \widetilde{\mathcal{E}}^{\infty}_{\hom}(\deform)\colonequals
  \begin{cases}
    \displaystyle \int_{\Omega} Q^{\infty}_{\rm ext} \left( x', \iota(x_{3}\II_{\deform}) - B \left( x', x_{3}, y  \right) \right)\dd x'&\text{if }\deform\in W^{2,2}_{\iso}(S;\R^3),\\
    +\infty&\text{otherwise,}
  \end{cases}
\end{equation}
with $Q^{\infty}_{\rm ext} $ given by Definition \ref{def:Qhom}. 

In order to prove this claim, note that for each $x' \in S$, 
\begin{equation}
    \begin{split}
        &Q^{\infty}_{\rm ext} \left( x', \iota(x_{3} \II_{\deform}) - B \left( x', \cdot \right) \right)\\
        =& \inf_{M,\varphi,d}\int_{\Box}Q\big(x', x_3,y, \iota(x_{3} \II_{\deform}) - B \left( x', x_{3}, y \right)+\iota(M)+\sym (\nabla' \varphi | d) \big)\dd (x_3,y)\\
        =& \inf_{M,d} \inf_{\varphi}  \int_{\Box}Q\big(x', x_3,y,\iota(x_{3} \II_{\deform}) - B \left( x', x_{3}, y \right)+\iota(M)+\sym (\nabla' \varphi | d) \big)\dd (x_3,y)\\
        =& \inf_{M,d} \int_{I} Q_{\ho}(x',x_3,\iota(x_3\II_{\deform}+M)+d \otimes e_{3} - B_\ho(x', x_{3}) ) + R(B)(x', x_{3}) \dd x_3.
    \end{split}
\end{equation}
Above, the infimum is taken over $ M \in \R^{2\times 2}_{\sym} , \varphi \in L^{2}(I, W^{1,2}(\mathcal{Y}, \R^{3})), d \in L^{2}(I, \R^{3})$. 
\end{remark}

We now state the exact Theorem we will prove, which we will then show to be equivalent to Theorem~\ref{T1}. 
\begin{theorem}[$\Gamma$-convergence]\label{T1'}
  Let $  \mathcal{E}_{h, \hom}$ be given by equation \eqref{eq:homenergy}. Then the following statements hold:
  \begin{enumerate}[(\alph*)]
  \item[(a)] \label{item:T1:compactness'}
   (Compactness). Let $(\deform_h)\subset L^2(\Omega;\R^3)$ be a sequence with equibounded energy, i.e.,
    \begin{equation}\label{eq:equibounded'}
      \limsup_{h\to0}  \mathcal{E}_{h, \hom}(\deform_h)<\infty.
    \end{equation}
    Then there exists $\deform\in W^{2,2}_{\iso}(S;\R^3)$ and a subsequence (not relabeled) such that
    \begin{subequations}\label{T1:conv'}
      \begin{alignat}{2}
      \deform_h-\fint_{\Omega}\deform_h\dd x & \to \deform & \qquad & \text{in $L^2(\Omega)$},\\
      \nabla_h \deform_h & \to (\nabla'\deform,b_y) && \text{in $L^2(\Omega)$}.
    \end{alignat}
    \end{subequations}
  \item[(b)] \label{item:T1:lower_bound'} (Lower bound). If $(\deform_h)\subset L^2(\Omega;\R^3)$ is a sequence with $\deform_h-\fint_{\Omega}\deform_h\dd x\to y$ in $L^2(\Omega)$, then
    \begin{equation*}
      \liminf_{h\to0}  \mathcal{E}_{h, \hom}(\deform_h)\geq \widetilde{\mathcal E}^{\infty}_{\hom} (\deform).
    \end{equation*}
  \item[(c)] \label{item:T1:recovery_sequence'} (Recovery sequence). For any $\deform\in W^{2,2}_{\iso}(S;\R^3)$ there exists a sequence $(\deform_h)\subset W^{1,\infty}(\Omega;\R^3)$ with $\deform_h\to y$ strongly in $W^{1,2}(\Omega;\R^3)$ such that
    \begin{equation}\label{T1:c'}
      \lim_{h\to0}  \mathcal{E}_{h, \hom}(\deform_h)=\widetilde{\mathcal E}^{\infty}_{\hom} (\deform).
    \end{equation}
  \end{enumerate}
\end{theorem}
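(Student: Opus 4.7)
The plan is to exploit Theorem~\ref{Thm:Expansion_Whom}, which endows the homogenized density $W^h_\hom$ with the same structural properties (frame indifference, non-degeneracy, quadratic expansion around $\Id$) as a standard stored energy function, only with the effective quadratic form $Q_\hom$ and effective prestrain $B_\hom$ in place of $Q$ and $B$. Consequently the proof of Theorem~\ref{T1'} reduces conceptually to a prestrained nonlinear dimension reduction in the spirit of \cite{padilla2022dimension}, itself modeled on the single-well framework of \cite{FJM02}. The $\eps$-scale is entirely absorbed into $W^h_\hom$, so only the parameter $h$ needs to be tracked.

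For part (a), the non-degeneracy bound~\eqref{Eq:NonDegeneracy_Whom} gives $\int_\Omega \dist^2(\nabla_h \deform_h, \SO{3})\,\dd x \leq C h^2$. Applying the geometric rigidity estimate of \cite{FJM02} on a covering of $S$ by squares of side $h$ yields rotation fields $R_h \in W^{1,2}(S;\SO{3})$ with $\|\nabla_h \deform_h - R_h\|_{L^2(\Omega)} \leq Ch$ and $\|\nabla' R_h\|_{L^2(S)} \leq C$; a subsequence satisfies $R_h \to R = (\nabla'\deform \mid b_\deform)$ for some $\deform \in W^{2,2}_\iso(S;\R^3)$, yielding both convergences in~\eqref{T1:conv'}. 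For part (b), I introduce the rescaled strain $G_h := \tfrac{1}{h}(R_h^\top \nabla_h \deform_h - \Id)$, bounded in $L^2(\Omega)$. A standard analysis (see~\cite{FJM02,padilla2022dimension}) identifies the weak limit of $\sym G_h$ as $x_3 \iota(\II_\deform) + \iota(M) + \sym(d\otimes e_3)$ for some $M \in L^2(S;\R^{2\times 2}_\sym)$ and $d \in L^2(\Omega;\R^3)$. Combining frame indifference of $W^h_\hom$ with~\eqref{Eq:TaylorExpansion_Whom}, after truncating $G_h$ onto the ``good set'' $\{|G_h| \leq h^{-1/2}\}$ to tame the remainder $\rho$, gives
\begin{equation*}
\liminf_{h \to 0} \mathcal E_{h,\hom}(\deform_h) \geq \int_\Omega \Bigl(Q_\hom\bigl(x', x_3, x_3\iota(\II_\deform) + \iota(M) + \sym(d \otimes e_3) - B_\hom\bigr) + R(B)\Bigr)\,\dd x
\end{equation*}
by weak lower semicontinuity of the convex quadratic integrand; taking the infimum over admissible $(M,d)$ recovers $\widetilde{\mathcal E}^\infty_\hom(\deform)$.

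For part (c), by density of smooth isometric immersions in $W^{2,2}_\iso(S;\R^3)$ (cf.~\cite{hornung2014derivation}) and a diagonal argument it suffices to treat smooth $\deform$. Given such a $\deform$ and a (smooth, after mollification) minimizing pair $(M^\ast,d^\ast)$ for $\widetilde{\mathcal E}^\infty_\hom(\deform)$, I build a Kirchhoff--Love type ansatz
\begin{equation*}
\deform_h(x) = \deform(x') + h x_3\, b_\deform(x') + h\,\Psi_h(x),
\end{equation*}
with $\Psi_h$ a lower-order corrector chosen so that, setting $R := (\nabla'\deform \mid b_\deform)$, one has $\tfrac{1}{h}(R^\top \nabla_h \deform_h - \Id) \to x_3 \iota(\II_\deform) + \iota(M^\ast) + d^\ast \otimes e_3$ uniformly. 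Inserting this into~\eqref{Eq:TaylorExpansion_Whom} and invoking dominated convergence (now legitimate thanks to the $L^\infty$ bound on the limit strain, which uniformly controls the modulus $\rho$) yields~\eqref{T1:c'}.

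The main obstacle is the non-uniformity of the remainder $\rho$ in~\eqref{Eq:TaylorExpansion_Whom}, which depends on $(x', x_3)$ and is only known to vanish pointwise as its last argument tends to zero. For the liminf this is handled by the truncation above, with Chebyshev controlling the discarded portion via the $L^2$-bound on $G_h$; for the limsup it is absorbed by the smoothness of the recovery profile, which ensures $h|G_h|\to 0$ uniformly. A secondary technicality is the density of smooth isometries in $W^{2,2}_\iso(S;\R^3)$, which is nontrivial but available in the literature. Finally, the equivalence of Theorem~\ref{T1'} with Theorem~\ref{T1} (i.e., $\widetilde{\mathcal E}^\infty_\hom = \mathcal E^\infty_\hom$) reduces to the identity~\eqref{eq:Etildeequiv} linking $\widetilde{\mathcal E}^\infty_\hom$ to $Q^\infty_{\mathrm{ext}}$, together with the corrector representation of Section~\ref{sect:Qeff}.
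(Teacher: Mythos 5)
Your overall strategy --- endowing $W^h_\ho$ with single-well structure via Theorem~\ref{Thm:Expansion_Whom} and then running an FJM-type compactness/lower bound/recovery argument --- is the same as the paper's, and your part (a) and the identification of the limiting strain in part (b) match the paper (which cites \cite{FJM02}). However, two steps as you describe them do not go through. In the lower bound, your remedy for the non-uniform remainder $\rho$ is insufficient: truncating to $\{|G_h|\le h^{-1/2}\}$ only makes the third argument of $\rho$ small ($h|G_h|\le h^{1/2}$), but the error you must kill is $\int_\Omega \chi_h\,(1+|G_h|^2)\,\rho(x',x_3,h+h^{1/2})\,\dd x$, where $\rho(\cdot,\cdot,h+h^{1/2})\to0$ only \emph{pointwise} in $(x',x_3)$ and $|G_h|^2$ is merely bounded in $L^1$, with no uniform integrability; the mass of $|G_h|^2$ may concentrate exactly where $\rho$ is still large, so Chebyshev plus pointwise convergence does not yield the limit. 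The paper resolves this with the Egorov-type selection of Lemma~\ref{lem:egorov}: one restricts further to sets $D_h$ with $|\Omega\setminus D_h|\to0$ on which $\rho(\cdot,\cdot,h+h^{1/2})\to0$ \emph{uniformly}, drops the complement using $W^h_\ho\ge0$, and recovers the full limit from $\xi_h\chi_h\rightharpoonup 1$ together with a weak--strong argument and lower semicontinuity of $Q_\ho$. Some device of this kind is unavoidable given that $\rho$ is not uniform in $(x',x_3)$ (the paper even remarks that a uniform modulus would simplify the proof).

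The second, more serious gap is the recovery sequence. A Kirchhoff--Love ansatz $\deform_h=\deform+hx_3b_\deform+h\Psi_h$ with order-one corrector produces, in the in-plane block of the scaled strain, only terms of the form $x_3\II_\deform+\sym(\nabla'\deform^\top\nabla'\psi)$; already for a flat $\deform$ these are planar symmetrized gradients, a \emph{closed proper} subspace of $L^2(S;\R^{2\times2}_{\sym})$ (Saint-Venant compatibility), whereas the pointwise minimizer $M^*$ in \eqref{eq:Etilde} is a general symmetric-matrix field, still not of this type after mollification. Hence no choice of $\Psi_h$ makes the strain converge, uniformly or even strongly in $L^2$, to $x_3\iota(\II_\deform)+\iota(M^*)+\sym(d^*\otimes e_3)$ within your ansatz; this attainability of an arbitrary $M^*$ is precisely the hard point. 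The paper instead invokes Proposition 4.4 of \cite{bartels2023nonlinear} (which needs a uniform lower bound on the eigenvalues of $M^*$), and that construction delivers only strong $L^2$ convergence of the nonlinear strain $\widetilde E_h$ together with $h^{-\beta}\|\nabla_h v_h-(\nabla'\deform,b_\deform)\|_{L^\infty}\to0$; in particular the strain is \emph{not} uniformly controlled, so your dominated-convergence step must again be replaced by the Egorov argument on sets $D_h$, with the contribution of $\Omega\setminus D_h$ handled through the growth condition (W5) of Definition~\ref{Def:StoredEnergyFct} and the strong $L^2$ convergence of $\widetilde E_h$. Without citing such a construction (or supplying one), part (c) of your proposal is incomplete at its crucial step.
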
%

The derivation of a nonlinear bending theory for prestrained thin elastic plates is the subject of \cite{padilla2022dimension}. However, in our case the energy functional is not of the form 
\begin{equation}
	\frac{1}{h^{2}} \int_{\Omega} W(x',x_3, \nabla_{h}\deform(x)(\mathrm{Id}+hB(x',x_3))^{-1}) \,\dd x.
\end{equation}
Therefore Theorem \ref{T1'} does not follow from \cite{padilla2022dimension}. 

We start by giving the short proof of the compactness statement of Theorem~\ref{T1'}. 

\begin{proof}[Proof of Theorem~\ref{T1'} item (a)\label{SS:compactness}]
By item (b) of Theorem \ref{Thm:Expansion_Whom}, there holds
    \begin{equation}\label{eq:FBE'}
      \limsup_{h\to0}\frac{1}{h^2}\int_\Omega \dist^2\big(\nabla_h\deform_h(x),\SO 3\big)\dd x<\infty.
    \end{equation}

Equation \eqref{T1:conv'} then follows from equation \eqref{eq:FBE'} by Theorem 4.1 of \cite{FJM02}.    
\end{proof}

We move on to prove the other items in  Theorem~\ref{T1'}.  

\subsection{Lower bound: Proof of Theorem~\ref{T1'} item (b)}\label{SS:lowerbound}

This proof will require the following lemma, which is an extension of Egorov's Theorem. 

\begin{lemma}
\label{lem:egorov}
Let $f_{n}: \Omega \to \R$ be a sequence of measurable functions converging pointwise a.e. to $f: \Omega \to \R$. Then there exists a sequence of measurable sets $D_{n} \subset \Omega$ such that 
\begin{itemize}[(\alph*)]
\item[(a)]  $ \lim_{n \to \infty}\left| \Omega \setminus D_{n} \right| =0 $, where $|\cdot|$ denotes the Lebesgue measure.

\item[(b)] $f_{n} \mathbf{1}_{D_{n}}$ converges uniformly to $f$.
\end{itemize}
\end{lemma}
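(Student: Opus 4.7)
The plan is to reduce this to the classical Egorov theorem via a diagonal-type construction. Since $\Omega = S \times I$ has finite Lebesgue measure (as $S$ is a bounded Lipschitz domain), classical Egorov applies: for every $k \in \mathbb{N}$, there exists a measurable set $E_k \subset \Omega$ with $|\Omega \setminus E_k| < 1/k$ on which $f_n \to f$ uniformly. After replacing $E_k$ by $\bigcup_{j=1}^k E_j$, I may assume $(E_k)$ is non-decreasing in $k$.

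Next, I pick strictly increasing integers $N_1 < N_2 < \cdots$ with the property that
\begin{equation*}
\sup_{x \in E_k} |f_n(x) - f(x)| < \tfrac{1}{k}\qquad\text{for all } n \geq N_k,
\end{equation*}
and define $D_n := E_k$ for $N_k \leq n < N_{k+1}$. Intuitively, as $n$ grows, $D_n$ runs through successive sets $E_k$ of shrinking-complement measure, each chosen late enough for the uniform-convergence tolerance on $E_k$ to be at most $1/k$.

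Verification is then immediate. For (a): when $n \geq N_k$, say $n \in [N_j, N_{j+1})$ with $j \geq k$, monotonicity gives $D_n = E_j \supseteq E_k$, so $|\Omega \setminus D_n| \leq |\Omega \setminus E_k| < 1/k$; sending $n \to \infty$ yields $|\Omega \setminus D_n| \to 0$. For (b), interpreted as $\|(f_n - f)\mathbf{1}_{D_n}\|_{L^\infty(\Omega)} \to 0$ (i.e., uniform convergence on $D_n$), the choice of $N_k$ gives $\sup_{x \in D_n}|f_n(x) - f(x)| < 1/k$ whenever $n \in [N_k, N_{k+1})$, which tends to $0$ as $n \to \infty$.

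The argument contains no real obstacle: it is a standard exercise in measure theory. The only mild subtlety is realizing that a single sequence of sets $(D_n)$ can achieve both (a) and (b) simultaneously, which is precisely what the diagonal coupling between the index $k$ of the Egorov set $E_k$ and the threshold $N_k$ accomplishes.
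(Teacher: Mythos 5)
Your proof is correct and follows essentially the same route as the paper: the paper invokes classical Egorov for the sets $\widetilde D_{1/k}$ and then appeals to a ``retardation'' argument, which is exactly the explicit coupling $D_n:=E_k$ for $N_k\le n<N_{k+1}$ that you carry out. Your reading of (b) as $\sup_{x\in D_n}|f_n(x)-f(x)|\to 0$ also matches how the lemma is used later in the paper.
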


\begin{proof}
We will only sketch the proof, since it is elementary. For each $\epsilon >0$, by Egorov's Theorem there exists a set $\widetilde{D}_{\epsilon}$ such that 
\begin{itemize}[(\alph*)]
\item[(a)]  $ \left| \Omega \setminus \widetilde{D}_{\epsilon} \right| \leq \epsilon $.

\item[(b)] $f_{n} \mathbf{1}_{\widetilde{D}_{\epsilon}}$ converges uniformly to $f$.
\end{itemize}
By reparametrizing, slowing down (or more precisely, applying a retardation argument as in \cite[Lemma 7]{padilla2020asymptotic} to) the sequence $\widetilde{D}_{1}, \widetilde{D}_{\frac{1}{2}}, \widetilde{D}_{\frac{1}{3}}, ...$, we obtain the desired sequence. 

\end{proof}

Equipped with Lemma, \ref{lem:egorov}, we will now give the proof of Theorem~\ref{T1'} item (b). 

\begin{proof}[Proof of Theorem~\ref{T1'} item (b)]

Proceeding as in \cite{FJM02}, we have that for each $h>0$ there exists a rotation-valued field $R_{h} \in L^{\infty} (S , \SO 3)$ (independent of $x_{3}$) such that
\begin{equation}
\label{eq:h2bound}
\limsup_{h \to 0} \frac{1}{h^{2}} \int_{\Omega} \left| \nabla_{h} v_{h} - R_{h} \right|^{2} \dd x < \infty.
\end{equation}

We now define the nonlinear strain
\begin{equation*}
E_{h}(v_{h}) \colonequals  \frac{ R_{h}^{T}  \nabla_{h}v_{h}- \mathrm{Id}}{h}.
\end{equation*}
By equation \eqref{eq:h2bound}, we have that
\begin{equation}
\label{eq:L2bound}
\limsup_{h \to 0} \left\| E_{h}(v_{h}) \right\|_{L^{2}} < \infty,
\end{equation}
and therefore (modulo a subsequence, not relabeled) $E_{h}(v_{h}) \rightharpoonup E$ weakly in $L^{2}$, for some $E \in L^{2}(\Omega, \R^{3 \times 3})$. We also define the set
\begin{equation*}
g \Omega_{h} \colonequals \{x \in \Omega : \left| E_{h}(v_{h})(x) \right| < h^{-\frac{1}{2}} \}, 
\end{equation*}
and the function
\begin{equation*}
\chi_{h} \colonequals \mathbf{1}_{g \Omega_{h}}.
\end{equation*}
Note that equation \eqref{eq:L2bound} implies that $\chi_{h} \to \mathbf{1}_{\Omega}$ pointwise a.e., and therefore in $L^{p}$ for every $p \in [1, \infty)$. Let $\rho$ be as in item (c) of Theorem \ref{Thm:Expansion_Whom}, and define the sequence of functions $\varphi_{h}: \Omega \to \R^{+}$ as 
\begin{equation}
\varphi_{h}(x',x_3) \colonequals \rho(x', x_{3}, h + h^{\frac{1}{2}}). 
\end{equation}
Note that $\varphi_{h}$ converges to the function $0$ pointwise a.e. as $h \to 0$. Therefore by Lemma \ref{lem:egorov}, there exists a sequence of measurable sets $D_{h} \subset \Omega$ such that 
\begin{enumerate}[(\alph*)]
\item[(a)]  $ \lim_{h \to 0}\left| \Omega \setminus D_{h} \right| =0 $.

\item[(b)] $\varphi_{h} \mathbf{1}_{D_{h}}$ converges uniformly to $0$.
\end{enumerate}

We define the sequence of functions $\xi_{h}: \Omega \to \R$ as
\begin{equation*}
\xi_{h} \colonequals \mathbf{1}_{D_{h}},
\end{equation*}
and also the function $\overline{\rho} : \R \to \R$
\begin{equation}
\overline{\rho} (h) = \sup_{x \in \Omega} \xi_{h}(x)\varphi_{h}(x).
\end{equation}
Note that $ \lim_{h \to 0} \overline{\rho} (h)=0$ by definition of $D_{h}$. 

We can now write
\begin{equation*}
\begin{split}
&\int_{\Omega} \left| \frac1{h^2} \xi_{h}\chi_{h}W^h_\ho \Big(x, \mathrm{Id} +hE_{h}(v_{h}) \Big) - Q_{\ho}(x, \xi_{h}\chi_{h}( E_{h}(v_{h}) - B_\ho) ) - R(B) \right| \dd x \\
 \leq&\int_{D_{h} \cap g \Omega_{h}} \left(1 + \left| \xi_{h}\chi_{h}E_{h}(v_{h}) \right|^{2} \right) \rho\left(x, h+ |hE_{h}(v_{h})|\right) \dd x \\
 \leq&\int_{\Omega} \varphi_{h}(x) \dd x + \int_{\Omega} \xi_{h}\chi_{h}|E_{h}(v_{h})|^{2} \varphi_{h}(x) \dd x \\
\leq&\int_{\Omega} \varphi_{h}(x) \dd x + \overline{\rho}(h) \int_{\Omega}|E_{h}(v_{h})|^{2} \dd x.
\end{split}
\end{equation*}

Note that 
\begin{equation}
\lim_{h \to 0} \int_{\Omega} \varphi_{h}(x) \dd x =0
\end{equation}
by Monotone Convergence Theorem since $\varphi_{h}(x)$ converges monotonically to $0$ for each $x$, and also that 
\begin{equation}
\lim_{h \to 0}  \overline{\rho}(h) \int_{\Omega}|E_{h}(v_{h})|^{2} \dd x =0
\end{equation}
since $\lim_{h \to 0} \overline{\rho}(h) =0$ and $\limsup_{h \to 0} \left\| E_{h}(v_{h}) \right\|_{L^{2}} < \infty$. 

Therefore, we conclude that

\begin{equation}
\begin{split}
&\liminf_{h \to 0}\int_{\Omega}\frac1{h^2} W^h_\ho \Big(x,\nabla_h v_{h} \Big)  \dd x \\
=&\liminf_{h \to 0}\int_{\Omega}\frac1{h^2} W^h_\ho \Big(x, \mathrm{Id} +hE_{h}(v_{h}) \Big) \dd x \\
 \geq &\liminf_{h \to 0}\int_{\Omega}\frac1{h^2} \xi_{h}\chi_{h}W^h_\ho \Big(x, \mathrm{Id} +hE_{h}(v_{h}) \Big) \dd x \\
 = &\liminf_{h \to 0} \int_{\Omega} Q_{\ho}(x, \xi_{h}\chi_{h}(E_{h}(v_{h}) - B_\ho) ) + \xi_{h}\chi_{h} R(B) \dd x. 
\end{split}
\end{equation}

Note that $\xi_{h}\chi_{h}$ converges to $\mathbf{1}_{\Omega}$ weakly in $L^{2}$. Hence, by weak-strong lemma, and the lower semi-continuity of $Q_{\ho}$, we infer that 
\begin{equation}
\liminf_{h \to 0} \int_{\Omega} Q_{\ho}(x, \xi_{h}\chi_{h}(E_{h}(v_{h}) - B_\ho) ) + \xi_{h}\chi_{h} R(B) \dd x  \geq \int_{\Omega} Q_{\ho}(x, E - B_\ho) ) + R(B) \dd x.
\end{equation}

Furthermore, using results from \cite[Proof of Theorem 6.1(i)]{FJM02}, we deduce that
  \begin{equation}
\sym E= \iota(x_3\II_{\deform}+M)+ \sym (d \otimes e_{3}),
  \end{equation}
for some $M\in L^2(S;\R^{2\times 2}_{\sym})$ and $d\in L^2(\Omega;\R^{3})$. Then, by definition of $\widetilde{\mathcal{E}}^{\infty}_{\hom}$ (equation \eqref{eq:Etilde}), we have that
\begin{equation}
\int_{\Omega} Q_{\ho}(x, E - B_\ho) ) + R(B) \dd x \geq \widetilde{\mathcal{E}}^{\infty}_{\hom}.
\end{equation}
From this, we can conclude. 
\end{proof}

\subsection{Upper bound: Proof of Theorem~\ref{T1'} item (c)}\label{SS:upperbound}

\begin{proof}[Proof of Theorem~\ref{T1'} item (c)]

Using a standard density argument we may assume that $\deform\in W^{2,2}_{\iso}(S, \mathbb{R}^{3}) \cap C^{\infty}(\overline{S}, \mathbb{R}^{3})$.  We define $M^{*} \in L^{2}(S, \mathbb{R}^{2 \times 2}_{\sym}) $ and  $d^{*} \in L^{2}(\Omega, \mathbb{R}^{3})$ as
\begin{equation}
(M^{*}, d^{*}) \colonequals {\rm argmin}_{M,d}\int_{\Omega} Q_{\ho}(x',x_3,\iota(x_3\II_{\deform}+M)+d \otimes e_{3} -B_\ho)\dd x.
\end{equation}
Note that $(M^{*}, d^{*}) $ exist, since they are the solution to a quadratic minimization problem on a linear space. Note also that the eigenvalues of $M^{*}$ are bounded below uniformly in $x'$, since $\II_{\deform}, Q_{\ho}, B_{\hom}$ are uniformly bounded. We may then apply Proposition 4.4 of \cite{bartels2023nonlinear}, and infer the existence of a sequence $v_{h} \in C^{\infty}(\Omega, \mathbb{R}^{3})$ such that:
\begin{itemize}
\item[(a)] 
\begin{equation}
\limsup_{h \to 0} \| v_{h} - \deform\|_{L^{\infty}} = 0\footnote{In this equation, we are identifying $\deform$ with its trivial extension to $\Omega$}.
\end{equation}  

\item[(b)] 
\begin{equation}\label{eq:strongconvergence}
\limsup_{h \to 0} \| \widetilde{E}_{h}(v_{h}) - \left(  \iota (x_{3}\II_{\deform} + M^{*}) + \sym(d^{*} \otimes e_{3} ) \right) \|_{L^{2}} = 0,
\end{equation}
where
\begin{equation}
\widetilde{E}_{h}(v_{h}) \colonequals \frac{\sqrt{ \nabla_{h} v_{h}^{T}\nabla_{h} v_{h}} - \mathrm{Id}}{h}.
\end{equation}

\item[(c)] 
\begin{equation}
\limsup_{h \to 0} h^{- \beta} \| \nabla_{h}v_{h} - R_{\deform} \|_{L^{\infty}} = 0
\end{equation}
 for some given $\beta \in (0,1)$, where
\begin{equation}
R_{\deform} \colonequals \left(\nabla' \deform,b_\deform\right).
\end{equation}
\end{itemize}

We now claim that $v_{h}$ satisfies 
    \begin{equation}
    \label{eq:limitenergy}
      \lim_{h\to0}  \mathcal{E}_{h, \hom}(\deform_h)=\widetilde{\mathcal E}^{\infty}_{\hom} (\deform).
    \end{equation}
The rest of this proof section will be aimed at proving equation \eqref{eq:limitenergy}. 

First, note that by polar factorization, there exists a sequence of measurable fields $R_{h} \in L^{\infty}( \Omega, \SO 3)$ such that
\begin{equation}
\nabla_{h} v_{h} = R_{h} \left( \mathrm{Id} + h \widetilde{E}_{h}(v_{h})  \right).
\end{equation}
Note also that item (c) implies that
\begin{equation}
\lim_{h \to 0} \| h \widetilde{E}_{h}(v_{h})  \|_{L^{\infty}} = 0.
\end{equation} 

We will now prove that
\begin{equation}
\begin{split}
\label{eq:upboundint}
&\lim_{h \to 0} \Bigg| \int_{\Omega}\frac1{h^2} W^h_\ho \Big(x, \mathrm{Id} +h\widetilde{E}_{h}(v_{h}) \Big) \dd x \\
&\ \ \ \ - \int_{\Omega} Q_{\ho}(x',x_3,\iota(x_3\II_{\deform}+M^{*})+\sym(d^{*} \otimes e_{3}) -B_\ho) + R(B)\dd x \Bigg|=0,
\end{split}
\end{equation}
which is equivalent to equation \eqref{eq:limitenergy}. 

Let $\rho$ be as in item (c) of Theorem \ref{Thm:Expansion_Whom}. Note that $\rho \left( x', x_{3}, h+ |h \widetilde{E}_{h}(v_{h})| \right)$ converges to $0$ pointwise a.e. as $h \to 0$. Then, by Lemma \ref{lem:egorov}, there exists a sequence of measurable sets $D_{h} \subset \Omega$ such that 
\begin{enumerate}[(\alph*)]
\item[(a)]  $ \lim_{h \to 0}\left| \Omega \setminus D_{h} \right| =0 $.

\item[(b)] $\rho \left( x', x_{3},h+ |h \widetilde{E}_{h}(v_{h})| \right) \mathbf{1}_{D_{h}}$ converges uniformly to $0$.
\end{enumerate}

We will now split the integral in equation\eqref{eq:upboundint} into two parts: the integral over $D_{h}$ and the integral over $\Omega \setminus D_{h}$. We first deal with the integral over $\Omega \setminus D_{h}$. 

Note that by item $(W5)$ of Definition \ref{Def:StoredEnergyFct}, we have that, for some constant $C >0$,
\begin{equation}
\begin{split}
&\int_{\Omega \setminus D_{h}}\frac1{h^2} W^h_\ho \Big(x, \mathrm{Id} +h\widetilde{E}_{h}(v_{h}) \Big) \dd x \\
\leq & C \int_{\Omega \setminus D_{h}} \left| \widetilde{E}_{h}(v_{h}) \right|^{2} + 1 \dd x  \\
\leq &C \int_{\Omega \setminus D_{h}} \left| \widetilde{E}_{h}(v_{h}) - \iota(x_3\II_{\deform}+M^{*})+\sym(d^{*} \otimes e_{3}) \right|^{2} + \left| \iota(x_3\II_{\deform}+M^{*})+\sym(d^{*} \otimes e_{3}) \right|^{2} + 1 \dd x.
\end{split}
\end{equation}

The integral of the first term of the last line tends to $0$ in account of equation \eqref{eq:strongconvergence}. 
The integral of the second and third terms tend to $0$ because of Dominated Convergence Theorem. Also by Dominated Convergence Theorem, we have that 
\begin{equation}
\lim_{h \to 0} \left| \int_{\Omega \setminus D_{h}} Q_{\ho}(x',x_3,\iota(x_3\II_{\deform}+M^{*})+\sym(d^{*} \otimes e_{3}) -B_\ho) + R(B)\dd x \right|=0.
\end{equation}

Putting everything together, we have that 
\begin{equation}
\begin{split}
&\lim_{h \to 0} \Bigg| \int_{\Omega \setminus D_{h}}\frac1{h^2} W^h_\ho \Big(x, \mathrm{Id} +h\widetilde{E}_{h}(v_{h}) \Big) \dd x \\
&\ \ \ \ - \int_{\Omega \setminus D_{h}} Q_{\ho}(x',x_3,\iota(x_3\II_{\deform}+M^{*})+\sym(d^{*} \otimes e_{3}) -B_\ho) + R(B) \dd x \Bigg|\\
=&0.
\end{split}
\end{equation}

On the other hand, proceeding as in Subsection \ref{SS:lowerbound}, by equation \eqref{eq:strongconvergence}, item (c) of Theorem \ref{Thm:Expansion_Whom}, and by definition of $D_{h}$ we have that 
\begin{equation}
\begin{split}
&\lim_{h \to 0} \Bigg| \int_{D_{h}}\frac1{h^2} W^h_\ho \Big(x, \mathrm{Id} +h\widetilde{E}_{h}(v_{h}) \Big) \dd x \\
&\ \ \ \ \ \ \ \ - \int_{D_{h}} Q_{\ho}(x',x_3,\iota(x_3\II_{\deform}+M^{*})+\sym(d^{*} \otimes e_{3}) -B_\ho)+ R(B) \dd x \Bigg| \\ 
\leq &\lim_{h \to 0} \left| \int_{D_{h}}\frac1{h^2} W^h_\ho \Big(x, \mathrm{Id} +h\widetilde{E}_{h}(v_{h}) \Big) \dd x - \int_{D_{h}} Q_{\ho}(x',x_3,\widetilde{E}_{h}(v_{h})) + R(B)\dd x \right|  \\
&+\lim_{h \to 0} \Bigg| \int_{D_{h}} Q_{\ho}(x',x_3,\widetilde{E}_{h}(v_{h}))\dd x \\
&\ \ \ \ \ \ \ \ \ - \int_{D_{h}} Q_{\ho}(x',x_3,\iota(x_3\II_{\deform}+M^{*})+\sym(d^{*} \otimes e_{3}) -B_\ho)\dd x \Bigg|.
\end{split}
\end{equation}

Note that, by definition of $D_{h}$,
\begin{equation}
\begin{split}
&\lim_{h \to 0} \left| \int_{D_{h}}\frac1{h^2} W^h_\ho \Big(x, \mathrm{Id} +h\widetilde{E}_{h}(v_{h}) \Big) \dd x - \int_{D_{h}} Q_{\ho}(x',x_3,\widetilde{E}_{h}(v_{h})) + R(B)\dd x \right| \\
\leq &\lim_{h \to 0} \int_{\Omega} \rho \left( x', x_{3},h+ |h \widetilde{E}_{h}(v_{h})| \right) \left( 1 + \left| \widetilde{E}_{h}(v_{h})\right| \right)  \mathbf{1}_{D_{h}} \dd x \\
\leq &\lim_{h \to 0}  \sup_{x \in D_{h}} \rho \left( x', x_{3},h+ |h \widetilde{E}_{h}(v_{h})| \right) \int_{\Omega} \left( 1 + \left| \widetilde{E}_{h}(v_{h})\right| \right) \dd x\\
= &0.
\end{split}
\end{equation}

On the other hand, item (b) of the properties of $v_{h}$ implies that
\begin{equation}
\begin{split}
&\lim_{h \to 0} \Bigg| \int_{D_{h}} Q_{\ho}(x',x_3,\widetilde{E}_{h}(v_{h}))\dd x \\
&\ \ \ \ \ \ \ \ \ - \int_{D_{h}} Q_{\ho}(x',x_3,\iota(x_3\II_{\deform}+M^{*})+\sym(d^{*} \otimes e_{3}) -B_\ho)\dd x \Bigg|\\
=&0. 
\end{split}
\end{equation}

From this, we can conclude.
\end{proof}

\subsection{Characterization of the energy}
\label{sect:alg}

In this section we show that indeed $\widetilde{\mathcal{E}}^{\infty}_{\hom} = \mathcal{E}^{\infty}_{\hom}$, hence completing the proof of Theorem \ref{T1}. This is a consequence of the algebraic machinery of Section~\ref{sect:Qeff}.

\begin{lemma}
\label{L:rewriting}
Let $\mathcal{E}^{\infty}_{\hom}$ be given by equation \eqref{eq:limitgamma} with $\gamma=\infty$, and $Q^\infty_{\eff}, B^\infty_{\eff}, $ and $\mathcal I_{\rm res}^\infty$ given by Definitions \ref{def:Qhom}, \ref{D:eff} and \ref{D:Ires}, respectively. Let $\widetilde{\mathcal{E}}^{\infty}_{\hom}$ be given by equation \eqref{eq:Etilde} or, equivalently, by equation \eqref{eq:Etildeequiv}. Then 
\begin{equation}
\widetilde{\mathcal{E}}^{\infty}_{\hom} = \mathcal{E}^{\infty}_{\hom}. 
\end{equation}
\end{lemma}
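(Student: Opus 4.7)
My plan is to prove the equality pointwise in $x'$ by an orthogonal decomposition argument, and then integrate. All the needed algebraic machinery is in Section~\ref{sect:Qeff}.

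First, I would set up the orthogonal decomposition of $\HH$ induced by the Hilbert space structure $(\cdot,\cdot)_{x'}$. Since $\HH^{\infty}_{\rm rel}$ is a closed subspace of the closed subspace $\HH^{\infty}\subset \HH$, we obtain
\begin{equation*}
    \HH \;=\; \HH^{\infty}_{\rm rel} \,\oplus\, \HH^{\infty,\perp}_{\rm rel} \,\oplus\, (\HH^{\infty})^{\perp},
\end{equation*}
with the projection onto $\HH^{\infty}$ equal to $P^{\infty}=P^{\infty}_{\rm rel}+P^{\infty,\perp}_{\rm rel}$ (where $P^{\infty}_{\rm rel}$ denotes the projection onto $\HH^{\infty}_{\rm rel}$). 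Consequently, for any $H\in\HH$, decomposing $H$ along these three mutually orthogonal summands and noting that $\chi\in \HH^{\infty}_{\rm rel}$ only affects the first component, the infimum in the definition of $Q^{\infty}_{\rm ext}$ is attained by cancelling the $\HH^{\infty}_{\rm rel}$-component of $H$, which yields
\begin{equation}\label{eq:plan-key}
    Q^{\infty}_{\rm ext}(x',H)\;=\;\bigl\|P^{\infty,\perp}_{\rm rel}(H)\bigr\|_{x'}^{2}+\bigl\|(\mathrm{Id}-P^{\infty})(H)\bigr\|_{x'}^{2}.
\end{equation}

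Next I would apply \eqref{eq:plan-key} to the integrand of $\widetilde{\mathcal E}^{\infty}_{\hom}$, namely $H=\iota(x_3\II_{\deform}(x'))-\sym B(x',\cdot)$. The first term on the right-hand side becomes $Q^{\infty}_{\eff}(x',\II_\deform-B^{\infty}_{\eff})$: by linearity of $P^{\infty,\perp}_{\rm rel}$ and the definitions $\mathbf{E}^{\infty}(x',G)=P^{\infty,\perp}_{\rm rel}(\iota(x_3G))$ and $\mathbf{E}^{\infty}(x',B^{\infty}_{\eff}(x'))=P^{\infty,\perp}_{\rm rel}(\sym B(x',\cdot))$ from Definition~\ref{D:eff}, we get
\begin{equation*}
    P^{\infty,\perp}_{\rm rel}\bigl(\iota(x_3\II_\deform)-\sym B\bigr)\;=\;\mathbf{E}^{\infty}\!\bigl(x',\II_\deform-B^{\infty}_{\eff}\bigr),
\end{equation*}
and by Lemma~\ref{L:E} the squared $\|\cdot\|_{x'}$-norm of this is exactly $Q^{\infty}_{\eff}(x',\II_\deform-B^{\infty}_{\eff})$. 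For the second term in \eqref{eq:plan-key}, the crucial observation is that $\iota(x_3\II_\deform(x'))\in\HH^{\infty}$ (taking $G=\II_\deform(x')$ and $\chi=0$ in the definition of $\HH^{\infty}$), so $(\mathrm{Id}-P^{\infty})\iota(x_3\II_\deform)=0$ and hence
\begin{equation*}
    (\mathrm{Id}-P^{\infty})\bigl(\iota(x_3\II_\deform)-\sym B\bigr)\;=\;-(\mathrm{Id}-P^{\infty})\sym B(x',\cdot).
\end{equation*}

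Finally, integrating over $x'\in S$ and recalling Definition~\ref{D:Ires}, the second term contributes precisely $\mathcal I_{\rm res}^{\infty}$, so that
\begin{equation*}
    \widetilde{\mathcal E}^{\infty}_{\hom}(\deform)\;=\;\int_{S}Q^{\infty}_{\eff}(x',\II_\deform-B^{\infty}_{\eff})\,\dd x'\,+\,\mathcal I_{\rm res}^{\infty}\;=\;\mathcal E^{\infty}_{\hom}(\deform),
\end{equation*}
which proves the lemma for $\deform\in W^{2,2}_{\iso}(S;\R^{3})$; for other $\deform$ both sides are $+\infty$. There is no real obstacle beyond bookkeeping: the only nontrivial ingredient is identifying the minimizer in the definition of $Q^{\infty}_{\rm ext}$ via the three-fold orthogonal splitting, and then matching the two residual pieces against Definitions~\ref{def:Qhom}, \ref{D:eff} and~\ref{D:Ires}.
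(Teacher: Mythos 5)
Your argument is correct and follows essentially the same route as the paper's proof: the paper likewise splits the integrand via Pythagoras into the component orthogonal to $\HH^{\infty}$ (which integrates to $\mathcal I_{\rm res}^{\infty}$ by Definition~\ref{D:Ires}) and a remaining infimum over $\HH^{\infty}_{\rm rel}$ that it identifies, through $P^{\infty,\perp}_{\rm rel}$ and Definition~\ref{D:eff}, with $Q^{\infty}_{\eff}(x',\II_{\deform}-B^{\infty}_{\eff})$. Your three-fold decomposition and the identity for $Q^{\infty}_{\rm ext}$ are just a slightly more explicit phrasing of the same bookkeeping, so nothing further is needed.
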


\begin{proof}
Fix $x' \in S$ and note that, for any $\chi \in \mathbf{H}^{\infty}_{\rm rel}$ and $G \in \mathbb{R}^{2 \times 2}_{\sym}$ there holds
\begin{equation}
\begin{split}
&\int_{\square} Q \left( x', x_{3}, y, \iota \left( x_3 G \right) + \chi - B(x', y, x_{3}) \right) \dd (x_3,y)\\
=& \left\| \iota \left( x_3 G \right) + \chi + P^{\infty}\sym(B)(x', \cdot) \right\|^{2}_{x'} + \left\| B(x', \cdot)- P^{\infty}\sym(B)(x', \cdot) \right\|^{2}_{x'}. 
\end{split}
\end{equation}

Furthermore, by Definitions \ref{def:Qhom} and \ref{D:eff} there holds
\begin{equation}
\begin{split}
\inf_{\chi \in \mathbf{H}^{\infty}_{\rm rel}} \left\| \iota \left( x_3 G \right) + \chi + P^{\infty}\sym(B)(x', \cdot) \right\|^{2}_{x'} &=  \left\| P^{\infty, \perp}_{\rm rel}\left(  \iota \left( x_{3} \left(G - B^{\infty}_{\eff}(x') \right) \right) \right) \right\|^{2}_{x'} \\
&= Q^{\infty}_{\eff}(x', G - B^{\infty}_{\eff}(x')). 
\end{split}
\end{equation}

Integrating over $x'$ and using Definition \ref{D:Ires}, we may conclude. 
\end{proof}
As a corollary of Theorem \ref{T1'} and Lemma \ref{L:rewriting}, we obtain Theorem \ref{T1}.

\section{Proof of Theorems \ref{T:quant}, \ref{T:quant2}, and Proposition \ref{prop:example}}
\label{sect:quant}

In this section, we will prove the results about a rate of convergence to the limit in Theorem \ref{T:gammainfty}, that is we prove Theorems \ref{T:quant}, Theorem \ref{T:quant2}, and Proposition \ref{prop:example}. For the proof, we will need two lemmas. We state the first one. 

\begin{lemma}
\label{lem:regularity}
Let $Q$ and $B$ be an admissible quadratic form and prestrain, respectively. 
Let $H\in \HH$, fix $x' \in S$ and let $(M,\varphi, d)$ denote the corrector for $\gamma=\infty$ associated with $H$, see Lemma \ref{L:corrector}. 
Assume \eqref{eq:alpharegularity} for some $\alpha\in(0,1]$ and 
\begin{equation*}
	H\in\begin{cases}
		C^{\alpha} (I, L^{2}(\mathcal{Y}, \R^{3}))&\text{if }\alpha\in(0,1),\\
		W^{1, \infty} (I, L^{2}(\mathcal{Y}, \R^{3}))&\text{if }\alpha=1.
	\end{cases}
\end{equation*}
Then, for $\alpha \in (0,1)$
\begin{equation}
\label{eq:holderreg}
\sym \nabla' \varphi \in C^{\alpha} (I, L^{2}(\mathcal{Y}, \R^{2\times 2})), \qquad d \in C^{\alpha} (I, L^{2}(\mathcal{Y}, \R^{3})),
\end{equation}
and for $\alpha =1$, 
\begin{equation}
\label{eq:lipschitzreg}
\sym \nabla' \varphi \in W^{1, \infty}(I, L^{2}(\mathcal{Y}, \R^{2\times 2})), \qquad d \in W^{1, \infty}(I, L^{2}(\mathcal{Y}, \R^{3})).
\end{equation}
\end{lemma}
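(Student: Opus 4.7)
The plan is to exploit a structural feature of the $\gamma=\infty$ corrector problem: since the relaxation space $\HH^\infty_{\rm rel}$ contains no derivatives in $x_3$, the corrector equation \eqref{eq:corrector_equation} decouples across $x_3$ into a one-parameter family of cell problems on $\mathcal{Y}$ whose coefficients and data depend H\"older continuously on $x_3$. The stated regularity will then follow from a Lax--Milgram stability estimate applied to this family.

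First I would show, by testing \eqref{eq:corrector_equation} with tensor-product test matrices of the form $\sym(\psi(x_3)\nabla'\tilde\varphi(y)\,|\,\psi(x_3)\tilde d)$ and invoking density of such products in $L^2(I, W^{1,2}(\mathcal{Y}, \R^3))\times L^2(I,\R^3)$, that for a.e.\ $x_3 \in I$ the slice $(\varphi(x_3,\cdot), d(x_3))$ solves
\begin{equation*}
\int_{\mathcal{Y}} \mathbb{L}(x', x_3, y)\bigl[H(x_3, y) + \iota(M) + \sym(\nabla'\varphi(x_3, y)\,|\,d(x_3))\bigr] : \sym(\nabla'\tilde\varphi\,|\,\tilde d)\, \dd y = 0
\end{equation*}
for all $\tilde\varphi \in W^{1,2}(\mathcal{Y}, \R^3)$ and $\tilde d \in \R^3$. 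Combining the uniform coercivity $Q\in\mathcal Q(\alpha,\beta)$ with a Korn--Poincar\'e inequality on $\mathcal{Y}$ (after normalizing $\int_\mathcal{Y}\varphi\,\dd y=0$), the slice problem is uniquely solvable and satisfies the uniform bound $\|\sym(\nabla'\varphi(x_3,\cdot)\,|\,d(x_3))\|_{L^2(\mathcal{Y})} \leq C(\|H(x_3,\cdot)\|_{L^2(\mathcal{Y})} + |M|)$, which is bounded in $x_3$ thanks to the H\"older (resp.\ Lipschitz) assumption on $H$.

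For $x_3^1, x_3^2 \in I$, set $\delta\varphi := \varphi(x_3^1,\cdot) - \varphi(x_3^2,\cdot)$, $\delta d := d(x_3^1) - d(x_3^2)$, $F_i := H(x_3^i,\cdot) + \iota(M)$ and $\mathbb{L}_i := \mathbb{L}(x', x_3^i, \cdot)$. Subtracting the two slice equations gives
\begin{equation*}
\int_\mathcal{Y} \mathbb{L}_1 \sym(\nabla'\delta\varphi\,|\,\delta d):\sym(\nabla'\tilde\varphi\,|\,\tilde d)\,\dd y = -\int_\mathcal{Y} \mathbb{L}_1(F_1 - F_2):\sym(\nabla'\tilde\varphi\,|\,\tilde d)\,\dd y - \int_\mathcal{Y}(\mathbb{L}_1 - \mathbb{L}_2)\bigl[F_2 + \sym(\nabla'\varphi_2\,|\,d_2)\bigr]:\sym(\nabla'\tilde\varphi\,|\,\tilde d)\,\dd y.
\end{equation*}
Testing with $(\tilde\varphi, \tilde d) = (\delta\varphi, \delta d)$, using coercivity on the left, the Cauchy--Schwarz inequality on the right, and the uniform slice bound on $F_2 + \sym(\nabla'\varphi_2\,|\,d_2)$, one arrives at
\begin{equation*}
\|\sym(\nabla'\delta\varphi\,|\,\delta d)\|_{L^2(\mathcal{Y})} \leq C\bigl(\|H(x_3^1,\cdot) - H(x_3^2,\cdot)\|_{L^2(\mathcal{Y})} + \|\mathbb{L}_1 - \mathbb{L}_2\|_{\infty}\bigr) \leq C|x_3^1 - x_3^2|^\alpha,
\end{equation*}
where the last step uses hypothesis \eqref{eq:alpharegularity} and the H\"older/Lipschitz assumption on $H$. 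Separate control on $\sym\nabla'\varphi$ and on $d$ is extracted from this $3\times 3$ matrix estimate: the $2\times 2$ upper-left block of $\sym(\nabla'\varphi\,|\,d)$ equals $\sym\nabla'(\varphi^{(1)},\varphi^{(2)})$, the $(3,3)$-entry equals $d^{(3)}$, and the remaining components $d^{(1)},d^{(2)}$ are recovered by averaging the $(1,3)$ and $(2,3)$ entries of the matrix over $\mathcal{Y}$ (the contributions coming from $\nabla'\varphi^{(3)}$ have zero mean by periodicity). For $\alpha=1$, the $W^{1,\infty}(I;L^2(\mathcal{Y}))$ statement is then equivalent to the Lipschitz bound by the standard characterization of Lipschitz maps into a Hilbert space.

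The main obstacle I expect is the careful justification of the decoupling step and the verification of coercivity of the slice bilinear form, which non-trivially couples the $\R^3$-valued vector $d$ with the gradient $\nabla'\varphi$; once these are in place, the rest is a routine Lax--Milgram stability estimate for a family of elliptic cell problems with H\"older continuous dependence on the external parameter $x_3$.
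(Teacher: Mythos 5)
Your proposal is correct and takes essentially the same route as the paper's proof: the paper also slices the $\gamma=\infty$ corrector equation into $x_3$-parametrized cell problems on $\mathcal{Y}$, subtracts the equations at two heights (phrased there as difference quotients normalized by $\delta^{\alpha}$), tests with the difference, and concludes via coercivity, Cauchy--Schwarz, and the $\alpha$-H\"older continuity of $\mathbb{L}$ and $H$. Your additional care in justifying the decoupling by density of tensor-product test functions and in extracting $\sym\nabla'\varphi$ and $d$ separately from the matrix estimate only makes explicit steps the paper leaves implicit.
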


\begin{proof}
We begin by noting that, for every $x_{3} \in I$, $\varphi(x_{3}, \cdot), d(x_{3}, \cdot)$ satisfies
  \begin{equation}
  \label{eq:noshift}
    \int_{\mathcal{Y}}\mathbb L(x', x_{3}, y)\left[H(x_{3}, y)+\iota(M)+\sym(\nabla'\varphi(x_{3}, y) | d(x_{3}))\right]:\big(\sym(\nabla'\varphi'| d')\big)\dd y=0
  \end{equation}
  for all $\varphi' \in W^{1,2}(\mathcal{Y}, \R^{3})$, $d' \in \R^{3}$. Similarly, for any $-x_{3} + \frac{1}{2}< \delta < \frac{1}{2}-x_{3}$, $\varphi(x_{3}+ \delta, \cdot)$ and $d(x_{3} + \delta, \cdot)$ satisfy
  \begin{equation}
  \label{eq:shift}
    \int_{\mathcal{Y}}\mathbb L(x', x_{3}+\delta,y)
    \begin{aligned}[t]
        &\left(\sym(\nabla'\varphi'| d')\right) \\
        &: \left[H(x_{3} + \delta, y)+\iota(M)+\sym(\nabla'\varphi(x_{3} + \delta, y)| d(x_{3} + \delta))\right] \dd y=0
    \end{aligned}
  \end{equation}
 for all $\varphi' \in W^{1,2}(\mathcal{Y}, \R^{3})$, $d' \in \R^{3}$. Given $x_{3} \in I$, and $\delta < 1-x_{3}$, we define the finite difference quotients 
\begin{equation}
\varphi_{1}^{\delta} (x_{3}, \cdot ) \colonequals \frac{\varphi(x_{3} + \delta, \cdot ) - \varphi( x_{3}, \cdot  )}{\delta^{\alpha}} \in W^{1,2}(\mathcal{Y}, \R^{3}) \qquad d_{1}^{\delta} \colonequals \frac{d(x_{3} + \delta) - d( x_{3} )}{\delta^{\alpha}} \in \R^{3}.
\end{equation}  

Note that equations \eqref{eq:noshift} and \eqref{eq:shift} imply that the difference quotients satisfy the equation
  \begin{equation}  
  \label{eq:quot}
  \begin{split}
    &\int_{\mathcal{Y}} \Bigg( \partial_{3}^{\delta}\mathbb L(x', x_{3}, y)\left[H(x_{3}, y) + \iota(M)+\sym(\nabla'\varphi(x_{3}, y)| d(x_{3}))\right]\\
    & \qquad +  (\mathbb L(x', x_{3}+\delta, y)\left[\partial_{3}^{\delta} H(x_{3}, y)+\sym(\nabla'\varphi_{1}^{\delta}(x_{3}, y)| d_{1}^{\delta}(x_{3}))\right] \Bigg) \\
    & \qquad : \big(\sym(\nabla'\varphi'| d')\big)\dd y=0
  \end{split}   
  \end{equation}
 for all $\varphi' \in W^{1,2}(\mathcal{Y}, \R^{3})$, $d' \in \R^{3}$, where $\partial_{3}^{\delta}\mathbb L(x', x_{3}, y) \colonequals \frac{\mathbb L(x', x_{3}+\delta, y) - \mathbb L(x', x_{3}, y)}{\delta^{\alpha}}$, and $\partial_{3}^{\delta}H(x_{3}, y)\colonequals \frac{ H(x_{3}+\delta, y) -  H(x_{3}, y)}{\delta^{\alpha}}$.  
 
We may now proceed as in step 2 of the proof of Lemma \ref{L:corrector} to obtain the desired bound: choosing $\sym(\nabla'\varphi'(y)| d')= \sym(\nabla'\varphi_{1}^{\delta}(x_{3}, y)| d_{1}^{\delta}(x_{3}))$ as a test function we obtain
  \begin{equation}  
  \label{eq:quot2}
  \begin{split}
    &\int_{\mathcal{Y}} \Bigg( \partial_{3}^{\delta}\mathbb L(x', x_{3}, y)\big(H(x_{3}, y) + \iota(M)+\sym(\nabla'\varphi(x_{3}, y)| d(x_{3}))\big)\\
    & \qquad +  (\mathbb L(x', x_{3}+\delta, y)\big(\partial_{3}^{\delta} H(x_{3}, y)\big) \Bigg) : \big(\sym(\nabla'\varphi_{1}^{\delta}(x_{3}, y)| d_{1}^{\delta}(x_{3}))\big)\dd y\\
    &=\int_{\mathcal{Y}} (\mathbb L(x', x_{3}+\delta, y)\sym(\nabla'\varphi_{1}^{\delta}(x_{3}, y)| d_{1}^{\delta}(x_{3})) : \sym(\nabla'\varphi_{1}^{\delta}(x_{3}, y)| d_{1}^{\delta}(x_{3}))\dd y.
  \end{split}   
  \end{equation}
Note that  
\begin{equation}
\begin{split}
&C \int_{\mathcal{Y}} (\mathbb L(x', x_{3}+\delta, y)\sym(\nabla'\varphi_{1}^{\delta}(x_{3}, y)| d_{1}^{\delta}(x_{3})) : \sym(\nabla'\varphi_{1}^{\delta}(x_{3}, y)| d_{1}^{\delta}(x_{3}))\dd y\\
\geq & \|\nabla'\varphi_{1}^{\delta}(x_{3}, \cdot)\|_{L^{2}(\mathcal{Y})}^{2} +  \left|d_{1}^{\delta}(x_{3})\right|^{2}. 
\end{split}
\end{equation}
 Using Cauchy-Schwartz on the LHS of equation \eqref{eq:quot2} and we have the bounds
\begin{equation}
\begin{split}
&\|\nabla'\varphi_{1}^{\delta}(x_{3}, \cdot)\|_{L^{2}(\mathcal{Y})}^{2} +  \left|d_{1}^{\delta}(x_{3})\right|^{2} \\
\leq & C\left(\|\partial_{3}^{\delta}H(x_{3}, \cdot)\|_{L^{2}}^{2}+\sup_{y \in \mathcal{Y}; x_{3}^{1}, x_{3}^{2} \in I } \frac{\left| \mathbb L(x', x_{3}^{1}, y) - \mathbb L(x', x_{3}^{2}, y)\right|}{|x_{3}^{2} - x_{3}^{1}|^{\alpha}}\|H(x_{3}, \cdot)\|_{L^{2}}^{2}\right),
\end{split}
\end{equation}
where $C$ depends only on $\alpha, \beta$. Equations \eqref{eq:holderreg} and \eqref{eq:lipschitzreg} follow. 

\end{proof}

Before proceeding to the proof of Theorem \ref{T:quant}, we will need a lemma concerning regularization by convolution.

\begin{lemma}
\label{lem:convolution}
Let $\mu: \R \to \R^{+}$ be the standard mollifier (with support in $I$) and define, for $l\leq 1$, the mollifier $\mu_{l}$ as 
\begin{equation}
\mu_{l}(x_{3}) \colonequals \frac{1}{l} \mu \left( \frac{x_{3}}{l} \right).
\end{equation}
Then there exists a constant $C$, depending only on $\mu$, such that for any $\varphi \in L^{2}(I, W^{1,2}(\mathcal{Y}, \R^{3}))$,\footnote{We are committing an abuse of notation by writing $\varphi \ast \mu_{l}$, since $\varphi(x_{3}, \cdot)$ is not defined for $x_{3}$ outside of $I$. This can be fixed by extending $\varphi$ by odd reflection to the interval $\left(-\frac{3}{2}, \frac{3}{2} \right)$.} 
\begin{equation}
\label{eq:firstconvolution}
\left\| \nabla' \varphi - \nabla' \varphi \ast \mu_{l} \right\|_{L^{2}(\square)} \leq C l^{\alpha} \sup_{x_{3}, x_{3} +z \in I} \left\| \frac{ \nabla' \varphi (x_{3}, y) - \nabla' \varphi(x_{3} + z, y)}{|z|^{\alpha}} \right\|_{L^{2}(\mathcal{Y})}
\end{equation}
and 
\begin{equation}
\label{eq:secondconvolution}
\left\| \partial_{3} \varphi \ast \mu_{l} \right\|_{L^{2}(\square)} \leq C l^{\alpha-1} \sup_{x_{3}, x_{3} +z \in I} \left\| \frac{ \varphi (x_{3}, y) - \varphi(x_{3} + z, y)}{|z|^{\alpha}} \right\|_{L^{2}(\mathcal{Y})}.
\end{equation}
\end{lemma}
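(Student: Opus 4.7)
The plan is to prove both bounds by standard mollifier estimates, exploiting that $\supp\mu_l\subset[-l/2,l/2]$ and the H\"older-in-$x_3$ regularity encoded on the right-hand sides. Preliminarily, to make $\varphi\ast\mu_l(x_3,\cdot)$ meaningful for every $x_3\in I$ I would extend $\varphi$ by odd reflection across the endpoints of $I$ to $(-3/2,3/2)$ as indicated in the footnote; this extension preserves the $L^2(\mathcal{Y})$-valued H\"older seminorm in $x_3$ up to a universal constant, so the suprema on the right-hand sides of \eqref{eq:firstconvolution} and \eqref{eq:secondconvolution} control the corresponding quantities on the larger interval.

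For \eqref{eq:firstconvolution}, using $\int_{\R}\mu_l\,\dd z=1$ I would write
\begin{equation*}
\nabla'\varphi(x_3,y) - (\nabla'\varphi\ast\mu_l)(x_3,y) = \int_{\R}\bigl[\nabla'\varphi(x_3,y)-\nabla'\varphi(x_3-z,y)\bigr]\mu_l(z)\,\dd z,
\end{equation*}
apply Minkowski's integral inequality in the $L^2(\Box)$-norm, and then bound each translate difference by $|z|^{\alpha}$ times the H\"older seminorm appearing on the right-hand side of \eqref{eq:firstconvolution}. Since $|z|\le l$ on $\supp\mu_l$ and $\|\mu_l\|_{L^1(\R)}=\|\mu\|_{L^1(\R)}$ is a universal constant, integrating against $\mu_l$ yields the desired factor $l^{\alpha}$.

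For \eqref{eq:secondconvolution}, the key step is to move the derivative onto the mollifier, $\partial_3\varphi\ast\mu_l=\varphi\ast\partial_3\mu_l$, and use the cancellation $\int_{\R}\partial_3\mu_l\,\dd z=0$ (a consequence of the compact support of $\mu$) to subtract a constant inside the convolution:
\begin{equation*}
(\partial_3\varphi\ast\mu_l)(x_3,y) = \int_{\R}\bigl[\varphi(x_3-z,y)-\varphi(x_3,y)\bigr]\partial_3\mu_l(z)\,\dd z.
\end{equation*}
The same Minkowski-plus-H\"older argument then produces a factor $l^{\alpha}$ from the increment of $\varphi$, while the rescaling identity $\|\partial_3\mu_l\|_{L^1(\R)}=l^{-1}\|\mu'\|_{L^1(\R)}$ contributes the remaining $l^{-1}$, giving the claimed $l^{\alpha-1}$ scaling.

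There is no conceptual obstacle; the only point worth flagging is that the mean-zero cancellation in the second estimate is essential---without it the crude bound $\|\partial_3\varphi\ast\mu_l\|_{L^2}\le\|\partial_3\mu_l\|_{L^1}\|\varphi\|_{L^2}\lesssim l^{-1}\|\varphi\|_{L^2}$ would be off by a factor of $l^{\alpha}$. All that remains beyond this is bookkeeping of constants and a routine verification that the odd-reflection extension does not spoil the H\"older seminorm estimates near $\partial I$.
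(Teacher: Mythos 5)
Your proposal is correct and follows essentially the same route as the paper's proof: both estimates rest on writing the relevant quantity as a convolution of increments (using $\int_{\R}\mu_l\,\dd z=1$ for \eqref{eq:firstconvolution} and the mean-zero property of the derivative of the mollifier for \eqref{eq:secondconvolution}) and then invoking the H\"older-in-$x_3$ seminorm together with $|z|\lesssim l$ on the support of $\mu_l$, which produces the factors $l^{\alpha}$ and $l^{\alpha-1}$. The only cosmetic difference is that you pass the $L^2$ norm through the convolution via Minkowski's integral inequality, whereas the paper uses Jensen's inequality with respect to the probability measures $\mu_l(z)\,\dd z$ and $\bigl|\mu'\bigl(\tfrac{z}{l}\bigr)\bigr|/\bigl(l\int|\mu'|\bigr)\,\dd z$; the two devices are interchangeable here and give the same bounds.
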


\begin{proof}
\step 1 Proof of equation \eqref{eq:firstconvolution}.\\

\begin{equation}
\begin{split}
\left\| \nabla' \varphi - \nabla' \varphi \ast \mu_{l} \right\|_{L^{2}}^{2} &= \int_{\Box}  \left| \int_{-\frac{l}{2}}^{\frac{l}{2}} |z|^{\alpha}  \frac{\nabla' \varphi (x_{3}, y) - \nabla' \varphi(x_{3} + z, y) }{|z|^{\alpha}} \mu_{l}(z)\, \mathrm d z \right|^{2} \dd (x_3,y)\\
&\leq \int_{\Box}   \int_{-\frac{l}{2}}^{\frac{l}{2}} \left|  |z|^{\alpha} \frac{\nabla' \varphi (x_{3}, y) - \nabla' \varphi(x_{3} + z, y) }{|z|^{\alpha}} \right|^{2}\mu_{l}(z) \, \mathrm d z \ \dd (x_3,y)\\
&\leq l^{2 \alpha}  \int_{-\frac{l}{2}}^{\frac{l}{2}} \sup_{x_{3} \in I} \left\| \frac{ \nabla' \varphi (x_{3}, y) - \nabla' \varphi(x_{3} + z, y)}{|z|^{\alpha}} \right\|_{L^{2}(\mathcal{Y})}^{2} \mu_{l}(z) \, \mathrm d z \\
&\leq C l^{2\alpha}  \sup_{x_{3}, x_{3} +z \in I} \left\| \frac{ \nabla' \varphi (x_{3}, y) - \nabla' \varphi(x_{3} + z, y)}{|z|^{\alpha}} \right\|_{L^{2}(\mathcal{Y})}^{2}. 
\end{split}
\end{equation}
We have used Jensen's inequality to pass from line 1 to line 2.

\step 2 Proof of equation \eqref{eq:secondconvolution}.\\

\begin{equation}
\begin{split}
\left\| \partial_{3} \varphi \ast \mu_{l} \right\|_{L^{2}}^{2} &= \int_{\Box}  \left| \int_{-\frac{l}{2}}^{\frac{l}{2}} \varphi(x_{3} + z, y)\frac{\mu'\left(\frac{z}{l} \right)}{l^{2}} \, \mathrm d z \right|^{2} \dd (x_3,y)\\   
&= l^{-2} \int_{\Box}  \left| \int_{-\frac{l}{2}}^{\frac{l}{2}} \frac{\varphi(x_{3} + z, y) - \varphi(x_{3}, y)}{|z|^{\alpha}} |z|^{\alpha}\frac{\mu'\left(\frac{z}{l} \right)}{l} \, \mathrm d z \right|^{2} \dd (x_3,y)\\ 
&\leq  C l^{2(\alpha-1)} \int_{\Box}   \int_{-\frac{l}{2}}^{\frac{l}{2}} \left|\frac{\varphi(x_{3} + z, y) - \varphi(x_{3}, y)}{|z|^{\alpha}} \right|^{2} \left |\frac{\mu'\left(\frac{z}{l} \right)}{l} \right|  \, \mathrm d z \ \dd (x_3,y)\\ 
&\leq C l^{2(\alpha-1)} \int_{-\frac{l}{2}}^{\frac{l}{2}} \sup_{z>0} \left\| \frac{  \varphi (x_{3} + z, y) -  \varphi(x_{3} , y)}{|z|^{\alpha}} \right\|_{L^{2}}^{2}\left |\frac{\mu'\left(\frac{z}{l} \right)}{l} \right|  \, \mathrm d z \\ 
&\leq C l^{2(\alpha - 1)} \sup_{x_{3}, x_{3} +z \in I} \left\| \frac{  \varphi (x_{3} + z, y) - \varphi(x_{3}, y)}{|z|^{\alpha}} \right\|_{L^{2}(\mathcal{Y})}^{2}. 
\end{split}
\end{equation}
In order to pass from line 1 to line 2, we have used that $\mu'$ has integral $0$. In order to pass from line 2 to line 3, we have used Jensen's inequality for the probability measure $\left |\frac{\mu'\left(\frac{z}{l} \right)}{l \int |\mu'|} \right| $.
\end{proof}

Having proved Lemmas \ref{lem:regularity} and \ref{lem:convolution} , we now give the proof of Theorem \ref{T:quant}.  

\begin{proof}[Proof of Theorem \ref{T:quant}]

\step 1 Let $(M_{\infty}, \varphi_{\infty}, d_{\infty})$ denote the corrector at $\infty$ associated to $\iota(x_3 G)$ in the sense of Lemma \ref{L:corrector}. We claim that, if $Q$ is orthotropic, then $\varphi_{\infty} \cdot e_{3}=0$ and $d_{\infty}=0$.\\

To prove this claim, note that, using the hypothesis that $Q$ is orthotropic, we have that 
\begin{equation}
\begin{split}
Q^{\infty}_{\eff} (x', G) &= \int_{\Box }Q\left(x', x_3,y,\iota(x_3G + M_{\infty})+ \left( \nabla'\varphi_{\infty} | d_{\infty} \right) \right)\dd (x_3,y) \\ 
&= \int_{\Box }Q\left(x', x_3,y,\iota(x_3G+ M_{\infty})+ \iota \left( \nabla'\varphi_{\infty} \right)_{2 \times 2} \right) \\
&\ \ \ \ \ \ + Q\left(x', x_3,y, e_{3}\otimes (\partial_{1} \varphi_{\infty}^{3}, \partial_{2} \varphi_{\infty}^{3}, 0) + d_{\infty}\otimes e_{3} \right)\dd (x_3,y). 
\end{split}
\end{equation}

Hence, setting $\widetilde{\varphi}_{\infty} := (\varphi_{\infty}^{1}, \varphi_{\infty}^{2}, 0)$, we get that 
\begin{equation}
\int_{\Box }Q\left(x', x_3,y,\iota(x_3G)+ \left( \nabla'\widetilde{\varphi}_{\infty} | 0 \right) \right)\dd (x_3,y) \leq Q^{\infty}_{\eff} (x',G).  
\end{equation}

By uniqueness of the corrector, this implies that $\widetilde{\varphi}_{\infty} = {\varphi}_{\infty}$ and $d_{\infty} = 0$. 

\step 2 Proof of item (a).\\

\medskip\noindent\textit{Substep 2.1 }Proof of upper bound for $Q^{\gamma}_{\eff} (x', G) $. For this substep, we assume either that $Q$ is even in $x_{3}$ or is orthotropic. \\ 

Let $\mu_{l}$ be as in Lemma \ref{lem:convolution}, with $l=\gamma^{-p}$, for $p>0$ to be determined later. 

Define
\begin{equation}
D_{\infty}(x_{3}) \colonequals \int_{0}^{x_{3}} d_{\infty}(s) \dd s.
\end{equation}  
We use $\varphi_{\infty} \ast \mu_{l} + \gamma D_{\infty} $ as a test function in the definition of $Q^{\gamma}_{\eff} (x', G)$ and get:
\begin{equation}
\label{eq:comparison}
\begin{split}
Q^{\gamma}_{\eff} (x', G) &\leq \int_{\Box }Q\left(x', x_3,y,\iota(x_3G+ M_{\infty})+ \nabla_{\gamma} \left( \varphi_{\infty}\ast \mu_{l} + \gamma D_{\infty} \right) \right)\dd (x_3,y) \\
&= \int_{\Box }Q\Bigg(x', x_3,y,\iota(x_3G +\iota(M_{\infty}) +\left( \nabla'\varphi_{\infty} | d_{\infty} \right)+ (\nabla'\varphi_{\infty}\ast \mu_{l}-\nabla'\varphi_{\infty}|0)\\
&\ \ \ \ \ \ \ \ \ \ \ \ + \frac{1}{\gamma} \left( 0 | 0 | \partial_{3} \varphi_{\infty}\ast \mu_{l} \right) \Bigg)\dd (x_3,y) \\
&= Q^{\infty}_{\eff} (x', G) + \int_{\Box }\frac{1}{\gamma^{2}}Q\left(x', x_3,y,  \left( 0 | 0 | \partial_{3 }\varphi_{\infty}\ast \mu_{l} \right) \right)\\
&\ \ \ \ \ \ \ \ \ \ \ \ + Q\left(x', x_3,y, (\nabla'\varphi_{\infty}\ast \mu_{l}-\nabla'\varphi_{\infty}|0) \right)\dd (x_3,y).
\end{split}
\end{equation}

In equation \eqref{eq:comparison}, we have used that, if either $Q$ is even in $x_{3}$ or orthotropic, then\footnote{For the remainder of this section, we omit the dependence of $\mathbb L$ on $(x', x_{3}, y)$ for ease of notation.} 
\begin{equation}
\label{eq:innerprod}
\begin{split}
\int_{\Box }\mathbb L \left(\iota(x_3G) +\iota(M_{\infty}) + \left( \nabla'\varphi_{\infty} | d_{\infty} \right) : \left( 0 | 0 | \partial_{3}\varphi_{\infty} \ast \mu_{l}  \right) \right)\dd (x_3,y) &=0 \\
\int_{\Box }\mathbb L \left((\nabla'\varphi_{\infty}\ast \mu_{l}-\nabla'\varphi_{\infty}|0) : \left( 0 | 0 | \partial_{3}\varphi_{\infty} \ast \mu_{l}  \right) \right)\dd (x_3,y) &=0, 
\end{split}
\end{equation}
while the Euler-Lagrange equation for the corrector (see Lemma \ref{L:corrector}) implies that
\begin{equation}
\int_{\Box }\mathbb L \left(\iota(x_3G) +\iota(M_{\infty}) + \left( \nabla'\varphi_{\infty} | d_{\infty} \right) : (\nabla'\varphi_{\infty}\ast \mu_{l}-\nabla'\varphi_{\infty}|0) \right)\dd (x_3,y) =0.
\end{equation}

From equation \eqref{eq:comparison} and Lemma \ref{lem:convolution}, we get that 
\begin{equation}
Q^{\gamma}_{\eff} (x', G) \leq Q^{\infty}_{\eff} (x', G) + c|G|^{2} \left(\frac{l^{2(\alpha - 1)}}{\gamma^{2}} + l^{2 \alpha} \right).
\end{equation}
The optimal $l$ is given by $l=\gamma^{-1}$. From this, we may conclude. 

\medskip\noindent\textit{Substep 2.2 }Proof of upper bound for $Q^{\infty}_{\eff} (x', G) $. For the remainder of the proof, we return to assuming that $Q$ is orthotropic. \\ 

Let $(M_{\gamma},\varphi_{\gamma})$ denote the corrector associated to $\iota(x_3 G)$ for $\gamma \in (0, \infty)$. Denote $\varphi_{\gamma} = (\varphi_{\gamma}', \varphi_{\gamma}^{3})$. Consider as a test function in the definition of $Q^{\infty}_{\eff} (x', G)$, $\varphi=\varphi_{\gamma}'$ and $d=0$. Then, using the hypothesis that $Q$ is orthotropic, 
\begin{equation}
\begin{split}
Q^{\infty}_{\eff} (x', G) &\leq \int_{\Box }Q\left(x', x_3,y,\iota(x_3G + M_{\gamma})+ \left( \nabla'\varphi_{\gamma}' | 0 \right) \right)\dd (x_3,y) \\ 
&\leq  \int_{\Box }Q\left(x', x_3,y,\iota(x_3G + M_{\gamma})+ \left( \nabla'\varphi_{\gamma}' | 0 \right) \right)\dd (x_3,y)  \\
&\ \ \ \ \ \ + Q\left(x', x_3,y, e_{3}\otimes \nabla'\varphi_{\gamma}^{3} + \frac{1}{\gamma}\partial_{3}\varphi_{\gamma}\otimes e_{3} \right)\dd (x_3,y)\\
&= \int_{\Box }Q\left(x', x_3,y,\iota(x_3G)+ \iota(M_{\gamma}) + \nabla_{\gamma} \varphi_{\gamma} \right)\dd (x_3,y) \\
&=Q^{\gamma}_{\eff} (x', G). 
\end{split}
\end{equation}

\step 3 We claim that, if $Q$ is orthotropic, then there exists a constant $C$ (depending on $G$) such that
\begin{equation}
\left\| \sym \left( \nabla_{\gamma} \varphi_{\gamma} \right) + \iota(M_{\gamma}) - \sym \left( \nabla'\varphi_{\infty} | d_{\infty} \right) + \iota(M_{\infty}) \right\|_{L^{2}} \leq \frac{C}{{\gamma^{\alpha}}}.
\end{equation} \\

To prove this claim, note that 
\begin{equation}
\begin{split}
Q^{\gamma}_{\eff} (x', G) &= \int_{\Box }Q\left(x', x_3,y,\iota(x_3G)+ \iota(M_{\gamma}) + \nabla_{\gamma} \varphi_{\gamma} \right)\dd (x_3,y) \\
&= \int_{\Box }Q\Bigg(x',  x_3,y,\iota(x_3G)+ \iota(M_{\infty})+ \left( \nabla'\varphi_{\infty} | 0 \right) + \iota(M_{\gamma}) - \nabla_{\gamma} \varphi_{\gamma} \\
&\ \ \ \ \ - \iota(M_{\infty})- \left( \nabla'\varphi_{\infty} | 0 \right) \Bigg)\dd (x_3,y).
\end{split}
\end{equation}

Next, we note that the EL equation (equation \eqref{eq:corrector_equation}) for $(\varphi_{\infty}, d_{\infty})$ implies that
\begin{equation}
\int_{\Box }\mathbb L \left(\iota(x_3G + \iota(M_{\infty})+\left( \nabla'\varphi_{\infty} | 0 \right) \right): \left( \iota(M_{\gamma} -M_{\infty} ) + \left(  \nabla'\varphi_{\gamma}-  \nabla'\varphi_{\infty} | 0 \right) \right)\dd (x_3,y) =0. 
\end{equation}

On the other hand, the hypothesis that $Q$ is orthotropic implies that
\begin{equation}
\mathbb L \left(\iota(x_3G +\iota(M_{\infty})+ \left( \nabla'\varphi_{\infty} | 0 \right)\right) : e_{3}\otimes \partial_{3}\varphi_{\gamma} =0.
\end{equation}

Therefore 
\begin{equation}
\label{eq:lasteq}
\begin{split}
Q^{\gamma}_{\eff} (x', G) &= Q^{\infty}_{\eff} (x', G) +  \int_{\Box }Q\left(x', x_3,y,\iota(M_{\gamma}) - \nabla_{\gamma} \varphi_{\gamma} - \iota(M_{\infty}) - \left( \nabla'\varphi_{\infty} | 0 \right) \right)\dd (x_3,y)\\
&\geq Q^{\infty}_{\eff} (x', G) + c \left\| \sym \left( \nabla_{\gamma} \varphi_{\gamma} \right) + \iota(M_{\gamma}) - \sym \left( \nabla'\varphi_{\infty} | d_{\infty} \right) + \iota(M_{\infty}) \right\|_{L^{2}}^{2}. 
\end{split}
\end{equation}

From equation \eqref{eq:lasteq}, and item (a), we may conclude. 

\step 4 Proof of item (b).\\

Let $(M_{\infty}^{i},\varphi_{\infty}^{i}, d_{\infty}^{i})$ denote the corrector at $\infty$ associated to $\iota \left( x_3 G_i \right)$ in the sense of Lemma \ref{L:corrector}, and let $(M_{\gamma}^{i},\varphi_{\gamma}^{i})$ denote the corrector at $\gamma$ associated to $\iota \left( x_3 G_i \right)$ (see step 2 of the proof of Theorem \ref{T:gammainfty}). Using Proposition \ref{P:1} and Cauchy-Schwartz inequality, we have that
\begin{equation}
\begin{split}
&\left| B^{\gamma}_{\eff}(x') - B^{\infty}_{\eff}(x') \right|\\
 \leq &C \sum_{i=1}^{3}\left| \int_{\Box }\mathbb L\big( \sym \left( \nabla_{\gamma} \varphi_{\gamma}^{i} \right) + \iota(M^{i}_{\gamma}) - \sym \left( \nabla'\varphi^{i}_{\infty} | d^{i}_{\infty} \right) + \iota(M^{i}_{\infty}) \big):B(x', x_{3}, y)\dd (x_3,y)\right|\\
\leq & C \sum_{i=1}^{3} \left \|  \sym \left( \nabla_{\gamma} \varphi^{i}_{\gamma} \right) + \iota(M^{i}_{\gamma}) - \sym \left( \nabla'\varphi^{i}_{\infty} | d^{i}_{\infty} \right) + \iota(M^{i}_{\infty}) \right\|_{L^{2}} \left\| B(x', \cdot) \right\|_{L^{2}(\Box)}.
\end{split}
\end{equation}

Using item (a), we may conclude. 

\step 5 Proof of item (c).\\

Using Lemma \ref{L:rewriting} and choosing $\deform(x') = x'$ so that $\II_{\deform} = 0$, we have that 
\begin{equation}
\int_{S} Q_{\rm ext}^{\infty} (x', B(x', \cdot)) \dd x'=  \mathcal I_{\rm res}^\infty + \int_{S} Q^{\infty}_{\eff} \left( x',  B_{\eff}^{\infty} \right) \dd x'. 
\end{equation}
Similarly, for any $\gamma \in (0, \infty)$ there holds
\begin{equation}
\int_{S} Q_{\rm ext}^{\gamma} (x', B(x', \cdot)) \dd x'=  \mathcal I_{\rm res}^\gamma + \int_{S} Q^{\gamma}_{\eff} \left( x',  B_{\eff}^{\gamma} \right) \dd x'. 
\end{equation}
Hence,
\begin{equation}
\begin{split}
&\left| \mathcal I_{\rm res}^\gamma - \mathcal I_{\rm res}^\infty  \right|\\
\leq &  \int_{S} \left| Q_{\rm ext}^{\infty} (x', B(x', \cdot)) - Q_{\rm ext}^{\infty} (x', B(x', \cdot)) \right|  \dd x'+ \int_{S} \left| Q^{\infty}_{\eff} \left( x',  B_{\eff}^{\infty} \right) - Q^{\gamma}_{\eff} \left( x',  B_{\eff}^{\gamma} \right) \right| \dd x'.
\end{split}
\end{equation}
 
Furthermore
\begin{equation}
\begin{split}
&\int_{S} \left| Q^{\infty}_{\eff} \left( x',  B_{\eff}^{\infty} \right) -  Q^{\gamma}_{\eff} \left( x',  B_{\eff}^{\gamma} \right) \right| \dd x'\\
\leq &\int_{S} \left| Q^{\infty}_{\eff} \left( x',  B_{\eff}^{\infty} \right) -  Q^{\infty}_{\eff} \left( x',  B_{\eff}^{\gamma} \right) \right|  \dd x' + \left| Q^{\infty}_{\eff} \left( x',  B_{\eff}^{\gamma} \right) -  Q^{\gamma}_{\eff} \left( x',  B_{\eff}^{\gamma} \right) \right| \dd x'.
\end{split}
\end{equation} 

By item (a), we have that 
\begin{equation}
\int_{S} \left| Q^{\infty}_{\eff} \left( x',  B_{\eff}^{\gamma} \right) -  Q^{\gamma}_{\eff} \left( x',  B_{\eff}^{\gamma} \right) \right| \dd x' \leq \frac{C}{\gamma^{2 \alpha}}.
\end{equation}

On the other hand, using polar decomposition, Cauchy-Schwartz inequality, and item (c), we find that 
\begin{equation}
\begin{split}
\int_{S} \left| Q^{\infty}_{\eff} \left( x',  B_{\eff}^{\infty} \right) -  Q^{\infty}_{\eff} \left( x',  B_{\eff}^{\gamma} \right) \right| \dd x' &= \int_{S} \left| \mathbb L^{\infty}_{\eff} \left( x',  B_{\eff}^{\infty} + B_{\eff}^{\gamma} \right): \left(  B_{\eff}^{\infty} - B_{\eff}^{\gamma} \right)  \right| \dd x' \\
&\leq C \int_{S} \left|    B_{\eff}^{\infty} - B_{\eff}^{\gamma} \right| \dd x' \\
&\leq \frac{C}{\gamma},
\end{split}
\end{equation} 
where
  \begin{equation}
     \mathbb L^{\infty}_{\eff} F:G\colonequals\frac12\big(Q^{\infty}_{\eff}(F+G)-Q^{\infty}_{\eff}(F)-Q^{\infty}_{\eff}(G)\big),
  \end{equation}

In order to find a bound for $\left|  Q_{\rm ext}^{\gamma} (x', B(x', \cdot)) - Q_{\rm ext}^{\infty} (x', B(x', \cdot)) \right|$, we proceed as in step 1: let $(M^{B}_{\infty}, \varphi^{B}_{\infty},d^{B}_{\infty})$ denote the corrector associated to $B(x', \cdot)$ at $x'$. Define
\begin{equation}
D^{B}_{\infty}(x_{3}) \colonequals \int_{0}^{x_{3}} d^{B}_{\infty}(s) \dd s.
\end{equation} 
 
Let $\mu_{l}$ be as in Lemma \ref{lem:convolution} with $l = \gamma^{-p}$, and use $\varphi^{B}_{\infty} \ast \mu_{l} + \gamma D^{B}_{\infty} $ as a test function in the definition of $Q_{\rm ext}^{\gamma} (x', B(x', \cdot))$. The result is that
\begin{equation}
\label{eq:ineq3}
\begin{split}
Q_{\rm ext}^{\gamma} (x', B(x', \cdot)) &\leq \int_{\Box }Q\left(x', x_3,y,B(x', x_{3}, y)+ \iota(M^{B}_{\infty})+ \nabla_{\gamma} \left( \varphi^{B}_{\infty}\ast \mu_{l} + \gamma D^{B}_{\infty} \right) \right)\dd (x_3,y) \\
&= \int_{\Box }Q\Bigg( x', x_3,y,B(x', x_{3}, y)+\iota(M^{B}_{\infty}) +\left( \nabla'\varphi^{B}_{\infty} | d^{B}_{\infty} \right)+ (\nabla'\varphi^{B}_{\infty}\ast \mu_{l}-\nabla'\varphi^{B}_{\infty}|0)\\
&\ \ \ \ \ + \frac{1}{\gamma} \left( 0 | 0 | \partial_{3} \varphi^{B}_{\infty}\ast \mu_{l} \right) \Bigg)\dd (x_3,y) \\
&= Q_{\rm ext}^{\infty} (x', B(x', \cdot)) + C \left( \frac{l^{\alpha-1}}{\gamma} + l^{\alpha} \right).
\end{split}
\end{equation}

In equation \eqref{eq:ineq3}, we have used that, by Cauchy-Schwartz and Lemma \ref{lem:convolution},
\begin{equation}
\begin{split}
\left| \int_{\Box }\mathbb L \left(B(x', x_{3}, y)+\iota(M^{B}_{\infty}) +\left( \nabla'\varphi^{B}_{\infty} | d^{B}_{\infty} \right) \right): \left( 0 | 0 | \partial_{3}\varphi^{B}_{\infty} \ast \mu_{l} \right) \dd (x_3,y) \right| & \leq C {l^{\alpha-1}} \\
\left| \int_{\Box }\mathbb L \left(\nabla'\varphi^{B}_{\infty}\ast \mu_{l}-\nabla'\varphi^{B}_{\infty}|0  \right): \left( 0 | 0 | \partial_{3}\varphi^{B}_{\infty}\ast \mu_{l} \right)\dd (x_3,y) \right| &\leq C {l^{2 \alpha-1}},\\
\left| \int_{\Box }\mathbb L \left(B(x', x_{3}, y)+\iota(M^{B}_{\infty}) +\left( \nabla'\varphi^{B}_{\infty} | d^{B}_{\infty} \right)  \right): (\nabla'\varphi^{B}_{\infty}\ast \mu_{l}-\nabla'\varphi^{B}_{\infty}|0)\dd (x_3,y) \right| &\leq C l^{\alpha}.
\end{split}
\end{equation}

Choosing $l=\gamma^{-1}$ in equation \eqref{eq:ineq3} we find that 
\begin{equation}
\left|  Q_{\rm ext}^{\gamma} (x', B(x', \cdot)) - Q_{\rm ext}^{\infty} (x', B(x', \cdot)) \right| \leq  \frac{C}{\gamma^{\alpha}}. 
\end{equation}
It follows that
\begin{equation}
\left| \mathcal I_{\rm res}^\gamma - \mathcal I_{\rm res}^\infty  \right| \leq \frac{C}{\gamma^{\alpha}}.
\end{equation}
 
\end{proof}

\begin{remark}
    It is natural to try to use the EL equation (equation \eqref{eq:corrector_equation}) for the correctors in order to cancel the inner products (for example in equation \eqref{eq:innerprod}). However, this does not work, since the corrector equation (equation \eqref{eq:corrector_equation}) only holds for fields $d'$ that are independent of $y \in \mathcal{Y}$. 
\end{remark}

We now turn to the proof of Remark \ref{T:quant3}. The proof is very similar to that of Theorem \ref{T:quant}  item (a). Hence, instead of writing a full proof, we will just indicate the points at which the proofs differ. 

\begin{proof}[Proof of Remark \ref{T:quant3}]

Using once again $\varphi_{\infty} \ast \mu_{l} + \gamma D_{\infty} $ as a test function in the definition of $Q^{\gamma}_{\eff} (x', G)$ we get:
\begin{equation}
\label{eq:comparison2}
\begin{split}
Q^{\gamma}_{\eff} (x', G) &\leq Q^{\infty}_{\eff} (x', G) + \int_{\Box }\frac{1}{\gamma^{2}}Q\left(x', x_3,y,  \left( 0 | 0 | \partial_{3 }\varphi_{\infty}\ast \mu_{l} \right) \right) \dd (x_3,y)\\
&\ \ \ \ \ \ \ \ \ \ \ \ +  \int_{\Box } Q\left(x', x_3,y, (\nabla'\varphi_{\infty}\ast \mu_{l}-\nabla'\varphi_{\infty}|0) \right)\dd (x_3,y)\\
&\ \ \ \ \ \ \ \ \ \ \ \ + \frac{1}{\gamma} \int_{\Box }\mathbb L \left(\iota(x_3G) +\iota(M_{\infty}) + \left( \nabla'\varphi_{\infty} | d_{\infty} \right) : \left( 0 | 0 | \partial_{3}\varphi_{\infty} \ast \mu_{l}  \right) \right)\dd (x_3,y)\\
&\ \ \ \ \ \ \ \ \ \ \ \ + \int_{\Box }\mathbb L \left((\nabla'\varphi_{\infty}\ast \mu_{l}-\nabla'\varphi_{\infty}|0) : \left( 0 | 0 | \partial_{3}\varphi_{\infty} \ast \mu_{l}  \right) \right)\dd (x_3,y)
\end{split}
\end{equation}

By Cauchy-Schwartz and Lemma \ref{lem:convolution}
\begin{equation}
\label{eq:Cauchy-Sch}
\begin{split}
\frac{1}{\gamma} \left| \int_{\Box }\mathbb L \left(\iota(x_3G) +\iota(M_{\infty}) + \left( \nabla'\varphi_{\infty} | d_{\infty} \right) : \left( 0 | 0 | \partial_{3}\varphi_{\infty} \ast \mu_{l}  \right) \right)\dd (x_3,y) \right| &\leq C |G|^{2} \frac{l^{(\alpha - 1)}}{\gamma}  \\
\left|\int_{\Box }\mathbb L \left((\nabla'\varphi_{\infty}\ast \mu_{l}-\nabla'\varphi_{\infty}|0) : \left( 0 | 0 | \partial_{3}\varphi_{\infty} \ast \mu_{l}  \right) \right)\dd (x_3,y) \right| & \leq C |G|^{2} l^{2 \alpha - 1} , 
\end{split}
\end{equation}

From equation \eqref{eq:comparison2} and \eqref{eq:Cauchy-Sch} we get that 
\begin{equation}
Q^{\gamma}_{\eff} (x', G) \leq Q^{\infty}_{\eff} (x', G) + c|G|^{2} \left(\frac{l^{(\alpha - 1)}}{\gamma}  +l^{2 \alpha - 1}   \right).
\end{equation}
The optimal $l$ is given by $l=\gamma^{-\alpha}$. From this we may conclude. 
\end{proof}

\begin{remark}
\label{rem:orth}
   It is not possible to mimic the rest of the structure of the proof of Theorem \ref{T:quant} for non-orthotropic materials, even at the cost of a worse rate of convergence. Proving such a rate of convergence (if it exists) would require a different approach. 
\end{remark}
    
We now turn to Theorem \ref{T:quant2}. As mentioned earlier, we will not write the whole proof, since it is very similar to that of Theorem \ref{T:quant}. Instead, we will just mention the points at which the proofs differ. 

\begin{proof}[Proof of Theorem \ref{T:quant2}]

Let $G \in \R^{2 \times 2}_{\sym}$ and let $(M,\varphi, d)$ be the corrector associated to $\iota(x_3 G)$ at infinity. Note that, since the corrector equation is linear, and $M$ is independent of $x_{3}$,
\begin{equation}
(\varphi, d) = \sum_{i=1}^{N}  \mathbf{1}_{[p_{i}, p_{i+1})}(x_{3}) \left((\varphi_{i}^{0}, d_{i}^{0}) + x_{3}(\varphi_{i}^{1}, d_{i}^{1})  \right),
\end{equation}
where $\varphi_{i}^{0},\varphi_{i}^{1}  \in  W^{1,2}(\mathcal{Y}, \R^{3}), d_{i}^{0}, d_{i}^{1} \in \R^{3}$. 

We now proceed as in the proof of Theorem \ref{T:quant}.  Consider $\varphi \ast \mu_{l}$, where $\mu: \R \to \R^{+}$ is the standard mollifier (supported in the interval $(-1,1)$) and $\mu_{l}$ is defined, for $l\leq 1$, as
\begin{equation}
\mu_{l}(x) \colonequals \frac{1}{l} \mu \left( \frac{x}{l} \right).
\end{equation}
Note that, for any $x_{3} \in I$ such that $|x_{3} - p_{i}| \geq l$ for all $i$, $\varphi(x_{3}) = \varphi \ast \mu_{l}(x_{3})$. Furthermore, for any $x_{3}$ such that, for some $i$, $|x_{3} - p_{i}| \leq l$, $\left| \partial_{3} \varphi \ast \mu_{l} (x_{3}) \right| \leq \frac{C}{l}$. These observations imply that, for $l$ small enough,
\begin{equation}
\begin{split}
\left\| \varphi - \varphi \ast \mu_{l} \right\|_{L^{2}}^{2} &\leq C l |G|^{2} \\
\left\| \partial_{3} \varphi \ast \mu_{l} \right\|_{L^{2} }^{2} &\leq \frac{C}{l} |G|^{2} .
\end{split}
\end{equation}
The proof of items (a), (b), and (c) is then exactly analogous to the proof of the same items in Theorem \ref{T:quant}. 

Similarly, denoting by $\varphi_{B}, d_{B}, M_{B}$ the corrector associated to $B$ at infinity, there holds
\begin{equation}
\begin{split}
\left\| \varphi_{B} - \varphi_{B} \ast \mu_{l} \right\|_{L^{2}}^{2} &\leq C l\\
\left\| \partial_{3} \varphi_{B} \ast \mu_{l} \right\|_{L^{2} }^{2} &\leq \frac{C}{l}.
\end{split}
\end{equation}
The proof of item (d) is then analogous to the proof of the same item in Theorem \ref{T:quant}. 

\end{proof}

Lastly, we show the proof of Proposition \ref{prop:example}.

\begin{proof}[Proof of Proposition \ref{prop:example}]

    \step 1 Identification of correctors.\\

    Let $\{G_{i}\}_{i=1}^{3}$ be the standard orthonormal basis $\R^{2 \times 2}_{\sym}$, and let $(M_{\gamma}^{i}, \varphi_{\gamma}^{i})$ denote the corrector associated to $G_{i}$. The correctors associated to $G_{1}$ and $G_{2}$ were identified in \cite[Subsection 7.7]{bohnlein2023homogenized} and they satisfy that $M_{\gamma}^{i} = 0$ and $\sym\left( \nabla_{\gamma} \varphi_{\gamma}^{i} \right)$ is independent of $\gamma$ for $i\in \{1, 2\}$. We now claim that $\varphi_{\gamma}^{3} $ is independent of $\gamma$.
    
    To prove this claim, note that for $i=3$, and $\gamma < \infty$ following \cite[Subsection 7.7]{bohnlein2023homogenized}, the corrector is given by $M_{3}=0$, and $\varphi_{\gamma}^{3} = (0, w_{*}^{\gamma}, 0)$, where $w_{*}^{\gamma}$ is given by 
    \begin{equation}
    \label{eq:correctorlaminate}
        w_{*}^{\gamma} := \operatorname{argmin}_{w} \int_{\left[ -\frac{1}{2},\frac{1}{2} \right]^{2}} \mu(y_{1}) \left( \left( \sqrt{2}x_{3} + \partial_{y_{1}} w \right)^{2} + \left( \frac{1}{\gamma} \partial_{3} w \right)^{2}\right)\dd x_3 \dd y_{1},
    \end{equation}
    where $w$ is minimized over functions of $y_{1},x_{3}$ that are periodic in $y_{1}$. For $i=3$ and $\gamma = \infty$ following again \cite[Subsection 7.7]{bohnlein2023homogenized} the corrector is given by $M_{i}=0$, and $\varphi_{\infty}^{3} = (0, w_{*}^{\infty}, 0)$, where $w_{*}^{\infty}$ is given by 
    \begin{equation} 
    \label{eq:wstar-inf}
        w_{*}^{\infty} := \operatorname{argmin}_{w} \int_{\left[ -\frac{1}{2},\frac{1}{2} \right]^{2}} \mu(y_{1})  \left( \sqrt{2}x_{3} + \partial_{y_{1}} w \right)^{2}\dd x_3 \dd y_{1},
    \end{equation}
    where $w$ is minimized over the same space as in equation \eqref{eq:correctorlaminate}. Note that $w_{*}^{\infty} $ satisfies for each $x_{3} \in I$ that
    \begin{equation}
        \partial_{y_{1}} \left(\mu(y_{1}) \left( \sqrt{2}x_{3} + \partial_{y_{1}} w_{*}^{\infty} \right) \right) = 0.
    \end{equation}
    In particular, $w_{*}^{\infty} $ is linear in $x_{3}$, and so $w_{*}^{\infty} $ also satisfies that
    \begin{equation}
        \partial^{2}_{3,3} w_{*}^{\infty}  =0.
    \end{equation}
    Therefore, $w_{*}^{\infty} $ also satisfies the Euler-Lagrange equation for $\gamma < \infty$:
    \begin{equation}
        \partial_{y_{1}} \left(\mu(y_{1}) \left( \sqrt{2}x_{3} + \partial_{y_{1}} w_{*}^{\infty} \right) \right) + \frac{1}{\gamma^{2}} \partial^{2}_{3,3} \left( \mu(y_{1})   w_{*}^{\infty} \right) = 0.
    \end{equation}

    Therefore, $\varphi_{\gamma}^{3}$ is independent of $\gamma$. Note that $\nabla_{\gamma}\varphi_{\gamma}^{3}$ still depends on $\gamma$. For the remainder of the proof, we will omit the dependence on $\gamma$ and write simply $\varphi^{3}$.

    \step 2 Proof of item (a).\\

    Note that it suffices to prove the statement for $G \in \{G_{1},G_{2},G_{3}\}$. If $G \in \{G_{1},G_{2}\}$ then, since $\sym\left( \nabla_{\gamma} \varphi_{\gamma}^{i} \right)$ is independent of $\gamma$, $Q^{\gamma}_{\eff} (G_{i}) = Q^{\infty}_{\eff} (G_{i})$ for $i \in \{1,2\}$. For the case $i=3$, using that the elastic law is isotropic, and the characterization of the corrector in step 1,
    \begin{equation}
    \begin{split}
        Q^{\gamma}_{\eff} (G_{3}) &= \int_{\Box }Q\left( x_3,y,\iota(x_3G_{3})+ \nabla_{\gamma} \varphi^{3}\right)\dd (x_3,y)\\
        &= Q^{\infty}_{\eff} (G_{3}) +\frac{1}{\gamma^{2}} \int_{\Box }Q\left( x_3,y,\iota \left(\partial_{3}\varphi^{3} \right)\right)\dd (x_3,y).
    \end{split}    
    \end{equation}
    From this, we may conclude. 
    
    \step 3 Proof of item (b).\\

    Using Proposition \ref{P:1} and \cite[Proposition 2.25]{bohnlein2023homogenized}, we have that
    \begin{equation*}
      \widehat B_i-\widehat B_i^{\gamma}
      =
      \int_{\Box}\mathbb L\left(\sym(\nabla'\varphi_{i} | 0) - \sym(\nabla_{\gamma} \varphi^{i}_{\gamma})  \right):B\dd (x_3,y),\qquad i=1,2,3.
    \end{equation*}
    In particular, if $B=\iota \left(\partial_{3}\varphi^{3} \right)$, then
    \begin{equation}
        \left| \widehat B_3-\widehat B_3^{\gamma} \right| = \frac{1}{\gamma} \int_{\Box }Q\left( x_3,y,\iota \left(\partial_{3}\varphi^{3} \right)\right)\dd (x_3,y).
    \end{equation}

    Using again Proposition \ref{P:1} and \cite[Proposition 2.25]{bohnlein2023homogenized} along with triangle inequality, there holds
    \begin{equation}
        \left| B_{\eff}^\infty - B_{\eff}^\gamma \right| \geq C \left( \left|\widehat{Q}^{-1}_{\infty} \left(\widehat B^{\gamma} - \widehat B^{\infty} \right) \right|  - \left|\left(\widehat{Q}^{-1}_{\gamma} - \widehat{Q}^{-1}_{\infty} \right)\widehat B^{\gamma} \right| \right).
    \end{equation}
    By step 2, 
    \begin{equation}
        \left|\left(\widehat{Q}^{-1}_{\gamma} - \widehat{Q}^{-1}_{\infty} \right)\widehat B^{\gamma} \right| \leq \frac{C}{\gamma^{2}},
    \end{equation}
    and so
    \begin{equation}
        \left| B_{\eff}^\infty - B_{\eff}^\gamma \right| \geq \frac{c}{\gamma} - \frac{C}{\gamma^{2}}.
    \end{equation}
    From this, we may conclude. 
\end{proof}

\appendix
\section{Proof of Lemmas \ref{L:E}, \ref{L:corrector}, and Proposition \ref{P:1}}

In this Section, we will prove results used in Section \ref{sect:Qeff}, concerning the definition of the effective quantities. The proof in the case $\gamma \in (0, \infty)$ is found in \cite{bohnlein2023homogenized} and hence will be omitted. 

We begin with the proof of Lemma \ref{L:E}. 
\begin{proof}[Proof of Lemma \ref{L:E}] 

We show the lemma for $\gamma = \infty$. The case $\gamma = 0$ follows analogously.
We will prove that, given $x' \in S$,
\begin{equation}
\label{eq:equiv'}
\alpha \int_{\square} \left| \iota \left( x_3 G \right)  \right|^{2} \leq \left\| P^{\infty, \perp}_{\rm rel} \left( \iota\left(x_{3}G\right) \right) \right\|^{2}_{x'} \leq \beta \int_{\square} \left| \iota \left( x_3 G \right)  \right|^{2},
\end{equation}
which is equivalent to equation \eqref{eq:ineq1'} in view of Definition \ref{def:class}, and also implies equation \eqref{eq:ineq2'} since $Q^{\infty}_{\hom}(x',G) = \left\| P^{\infty, \perp}_{\rm rel} \left( \iota\left(x_{3}G\right) \right) \right\|^{2}_{x'}$. The upper bound in equation \eqref{eq:equiv'} follows from the fact that $P^{\infty, \perp}_{\rm rel}$ is a projection. We will hence prove the lower bound. 

To prove the lower bound, we note that 
\begin{equation}
\left\| P^{\infty, \perp}_{\rm rel} \left( \iota\left(x_{3}G\right)\right) \right\|^{2}_{x'}  = \inf_{M, \varphi, d} \left\|  \iota\left(x_{3}G\right) +\iota(M)+\sym (\nabla' \varphi | d)\right\|^{2}_{x'},
\end{equation}
where the infimum is taken over all $M\in\R^{2\times 2}_{\sym}$, $\varphi \in L^{2}(I, W^{1,2}(\mathcal{Y}, \R^{3}))$, and $d \in L^{2}(I, \R^{3})$. 

On the other hand, for any such $(M, \varphi, d)$, there holds
\begin{equation}
\label{eq:lowbound}
\begin{split}
\left\|  \iota\left(x_{3}G\right) +\iota(M)+\sym (\nabla' \varphi | d)\right\|^{2}_{x'} &\geq \alpha \int_{\square} \left| \iota\left(x_{3}G\right) +\iota(M)+\sym (\nabla' \varphi | d) \right|^{2} \dd (x_3,y)\\
&= \alpha \int_{\square} \left| \iota\left(x_{3}G\right) \right|^{2} + |M|^{2} + \left| \sym (\nabla' \varphi | d) \right|^{2} \dd (x_3,y).
\end{split}
\end{equation}

In equation \eqref{eq:lowbound}, we have used that
\begin{equation}
\int_{\square} \iota\left(x_{3}G\right) : \left(\iota(M)+\sym (\nabla' \varphi | d) \right) \,\dd (x_3,y) = 0 = \int_{\square} \iota(M) : \sym (\nabla' \varphi | d) \,\dd (x_3,y),
\end{equation}
which is easily verified by a direct computation. From this, we may conclude. For the case $\gamma = 0$, note that also
\begin{equation}
\begin{aligned}
\int_{\square} \iota\left(x_{3}G\right) : \left(\iota(M)+\sym (\nabla' \varphi + x_3\nabla'^2\zeta | g) \right) \dd (x_3,y) &= 0, \\
\int_{\square} \iota(M) : \sym (\nabla' \varphi + x_3\nabla'^2\zeta | g) \,\dd (x_3,y)&= 0,
\end{aligned}
\end{equation}
for all $M \in \R^{2\times 2}_{\sym}$, $\varphi \in W^{1,2}(\mathcal Y,\R^3)$, $\zeta \in W^{2,2}(\mathcal{Y})$ and $g \in L^2(\Box,\R^3)$.
\end{proof}

We proceed to show Lemma \ref{L:corrector}. 
\begin{proof}[Proof of Lemma \ref{L:corrector}]
We prove the lemma for $\gamma = \infty$. The case $\gamma = 0$ follows analogously. The direct method of the Calculus of Variations implies that there exists a minimizer of
\begin{equation}
\inf_{M, \varphi, d} \left\| H+\iota(M)+\sym (\nabla' \varphi | d)\right\|_{x'}^{2}
\end{equation}
where the infimum is taken over all $M\in\R^{2\times 2}_{\sym}$, $\varphi \in L^{2}(I, W^{1,2}(\mathcal{Y}, \R^{3}))$, and $d \in L^{2}(I, \R^{3})$. By taking first variations, we can verify that the minimizer, denoted $(M_H^\infty,\varphi_H^\infty, d_H^\infty)$ satisfies equation \eqref{eq:corrector_equation}. 

Taking $H' = \iota(M_{H}^\infty)+\sym (\nabla' \varphi_{H}^\infty | d_{H}^\infty)$ as test functions in equation \eqref{eq:corrector_equation}, we obtain
\begin{equation}
\int_{\Box}Q\big(x', x_3,y, H' \big)\dd (x_3,y) = - \int_{\Box}\mathbb L H:H'\dd (x_3,y).
\end{equation}

By definition of admissibility, we have that
\begin{equation}
\label{eq:first}
\int_{\Box}Q\big(x', x_3,y, H' \big)\dd (x_3,y) \geq c \left\| H' \right\|^{2}_{L^{2}}.
\end{equation}

On the other hand, by Cauchy-Schwartz inequality, we have that 
\begin{equation}
\label{eq:second}
\left| \int_{\Box}\mathbb L H:H'\dd (x_3,y) \right| \leq C\left\| H \right\|_{L^{2}} \left\| H' \right\|_{L^{2}}.
\end{equation}

Combining equations \eqref{eq:first} and \eqref{eq:second}, we have that 
\begin{equation}
\left\| \iota(M_{H}^\infty)+\sym (\nabla' \varphi_{H}^\infty | d_{H}^\infty) \right\|_{L^{2}} = \left\| H' \right\|_{L^{2}} \leq C \left\| H \right\|_{L^{2}}.
\end{equation}

Furthermore, using orthogonality in the $L^{2}$ inner product, and Korns inequality, we have that 
\begin{equation}
\begin{split}
\left\| \iota(M_{H}^\infty)+\sym (\nabla' \varphi_{H}^\infty | d_{H}^\infty) \right\|_{L^{2}}^{2} &= \left\| \sym (\nabla' \varphi_{H}^\infty) \right\|_{L^{2}}^{2} + |M_{H}^\infty|^{2} + \|d_{H}^\infty\|_{L^{2}}^{2}\\
&\geq c\left\| \nabla' \varphi_{H}^\infty \right\|_{L^{2}}^{2} + |M_{H}^\infty|^{2} + \|d_{H}^\infty\|_{L^{2}}^{2}.
\end{split}
\end{equation}

From this, we may conclude equation \eqref{eq:corrector_apriori}. This also implies that, if $H=0$, then $\varphi_{H}^\infty = 0, M_{H}^\infty = 0,$ and $d_{H}^\infty = 0$, which implies that the solution is unique. For the case $\gamma = 0$, note that we also have the orthogonality,
\begin{equation}
\begin{aligned}
&\left\| \iota(M_{H}^0)+\sym (\nabla' \varphi_{H}^0 + x_3\nabla'^2 \zeta_H^0 | g_{H}^0) \right\|_{L^{2}}^{2} \\
=& \left\| \sym (\nabla' \varphi_{H}^0) \right\|_{L^{2}}^{2} + \|x_3\nabla'^2\zeta_H^0\|_{L^2}^2 + |M_{H}^0|^{2} + \|g_{H}\|_{L^{2}}^{2}.
\end{aligned}
\end{equation}

\end{proof}

To conclude, we show the proof of Proposition \ref{P:1}.
\begin{proof}[Proof of Proposition \ref{P:1}]
Again, we only prove the case $\gamma = \infty$ and the case $\gamma = 0$ follows analogously.

\step 1 Proof of item (a).\\

Equation \eqref{P:1:coercivityQhat} follows from equation \eqref{eq:ineq2'}. 

To prove equation \eqref{eq:matrixrep} let $(M_{\iota \left( x_3 G \right)},\varphi_{\iota \left( x_3 G \right)}, d_{\iota \left( x_3 G \right)})$ denote the corrector associated to $\iota \left( x_3 G \right)$ in the sense of Lemma \ref{L:corrector} for $\gamma = \infty$, and let $\chi_{G} \colonequals \iota(M_{\iota \left( x_3 G \right)})+\sym(\nabla'\varphi_{\iota \left( x_3 G \right)} | d_{\iota \left( x_3 G \right)})$. Similarly, let $\chi_{G_{i}} \colonequals \iota(M_{i})+\sym(\nabla'\varphi_{i} | d_{i})$. By linearity of the corrector equation, we then have that
\begin{equation}
\begin{split}
Q^{\infty}_{\eff}(x', G) &= \left\| P^{\infty, \perp}_{\rm rel} \left( \iota\left(x_{3}G\right) \right) \right\|^{2}_{x'}\\
&= \left\| \sum_{i=1}^{3} \widehat{G}_{i} \left( \iota(x_3 G_i) + \chi_{G_{i}} \right) \right\|^{2}_{x'}\\
&= \sum_{i,j=1}^{3} \left(\widehat{G}_{i} \iota(x_3 G_i) + \chi_{G_{i}}, \widehat{G}_{j} \left(\iota(x_3 G_j) + \chi_{G_{j}} \right) \right)_{x'} \\
&= \sum_{i,j=1}^{3} \left(\widehat{G}_{i}\iota(x_3 G_i) + \chi_{G_{i}}, \widehat{G}_{j} \left(\iota(x_3 G_j) \right) \right)_{x'} \\
&= \sum_{i,j=1}^{3} \widehat{Q}_{ij}^\infty \widehat{G}_{i}  \widehat{G}_{j}, 
\end{split}
\end{equation}
where the last two lines follow from the Euler-Lagrange equation for $\chi_{G_{i}}$ (equation \eqref{eq:corrector_equation}) and the definition of $\widehat{Q}^\infty$. 

\step 2 Proof of item (b).\\

Let $\widetilde{B} \colonequals \sum_{i=1}^{3} \left( (\widehat{Q}^\infty)^{-1} \widehat{B}^\infty \right)_{i} G_{i}$. Since $ \mathbf E^\infty(x', \cdot)$ is injective for a.e. $x' \in S$, it is enough to show that 
\begin{equation}
\label{eq:equiv}
 \mathbf E^\infty (x',\widetilde{B} ) = P^{\infty, \perp}_{\rm rel} \left( \sym B(x', \cdot) \right). 
\end{equation}
Since the range of $P^{\infty, \perp}_{\rm rel} \left( \sym B \right)$ is spanned by the vectors $\left\{\iota\left(x_{3}G_i\right) + \chi_{G_{i}} \right\}_{i=1}^{3}$, equation \eqref{eq:equiv} is equivalent to 
\begin{equation}
\label{eq:equiv2}
 \left(\mathbf E^\infty (x', \widetilde{B} ), \iota\left(x_{3}G_i\right) + \chi_{G_{i}} \right)_{x'} = \left( P^{\infty, \perp}_{\rm rel} \left( \sym B(x', \cdot) \right), \iota\left(x_{3}G_i\right) + \chi_{G_{i}} \right)_{x'}
\end{equation}
for each $ i \in \{1,2,3\}$. Since $P^{\infty, \perp}_{\rm rel} $ is self-adjoint and a projection, there holds
\begin{equation}
\begin{split}
\left( P^{\infty, \perp}_{\rm rel} \left( \sym B(x', \cdot) \right), \iota\left(x_{3}G_i\right) + \chi_{G_{i}} \right)_{x'} &= \left(  B (x', \cdot) , P^{\infty, \perp}_{\rm rel} \left( \iota\left(x_{3}G_i\right) + \chi_{G_{i}} \right) \right)_{x'} \\
&= \left(  B (x', \cdot) , \left( \iota\left(x_{3}G_i\right) + \chi_{G_{i}} \right) \right)_{x'} \\
&= \widehat{B}_{i}^\infty. 
\end{split}
\end{equation}

On the other hand, by linearity, $\mathbf E^\infty (x', \widetilde{B}) = \sum_{k=1}^{3} \left( (\widehat{Q}^\infty)^{-1} \widehat{B}^\infty \right)_{k} \left( \iota \left( x_3 G_k \right) + \chi_{G_{k}} \right)$. From this, we infer
\begin{equation}
\begin{split}
 \widehat{B}_i^\infty 
 &= \left(\widehat Q^\infty (\widehat Q^\infty)^{-1} \widehat B^\infty \right)_i
 = \sum_{k=1}^3 \widehat Q^\infty_{ik} \left((\widehat Q^\infty)^{-1} \widehat B^\infty \right)_k \\
 &= \left( \sum_{k=1}^3 \left((\widehat Q^\infty)^{-1} \widehat B^\infty \right)_k \left( \iota\left(x_{3}G_k\right) + \chi_{G_{k}} \right) , \iota\left(x_{3}G_i\right) + \chi_{G_{i}}\right)_{x'} \\
 &= \left(\mathbf E^\infty (x', \widetilde{B}^\infty)  , \iota\left(x_{3}G_i\right) + \chi_{G_{i}} \right)_{x'}.
\end{split}
\end{equation}
From this, we may conclude. 

\end{proof}

\section*{Acknowledgments}

 D.P.G.\ was supported in part by the Zuckerman STEM Leadership Program.
 S.N.\ and K.R.\ received support from the German Research Foundation (DFG) via the research unit FOR 3013, “Vector- and tensor-valued surface PDEs” (project number 417223351).

\bibliographystyle{alpha}
\bibliography{bibliography.bib}

\end{document}